\newcommand{\numberset}{\mathbb}
\newtheorem{theorem}{Theorem}[section]
\newtheorem{lemma}[theorem]{Lemma}
\newtheorem{remark}[theorem]{Remark}
\newtheorem{proposition}{Proposition}[section]
\newcommand{\cfan}[1]{\textnormal{\textbf{\detokenize{#1}}}}
\newcommand{\N}{\numberset{N}}
\newcommand{\R}{\numberset{R}}
\newcommand{\Pk}{\numberset{P}}
\newcommand{\Pkc}{\numberset{P}^{\rm cont}}
\newcommand{\diver}{{\rm div}}
\newcommand{\bdiver}{\boldsymbol{\diver}}
\newcommand{\bnabla}{\boldsymbol{\nabla}}
\newcommand{\bcurl}{\boldsymbol{{\rm curl}}}
\newcommand{\beps}{\boldsymbol{\epsilon}}
\newcommand{\jump}[1]{\lbrack\!\lbrack\,#1\,\rbrack\!\rbrack}
\newcommand{\media}[1]{\left\{\!\left\{\,#1\,\right\}\!\right\}}
\newcommand{\jumpmedia}[1]{\left(\!\left(\,#1\,\right)\!\right)}
\newcommand{\normas}[1]{\left\|#1\right\|_{\rm S}}
\newcommand{\normam}[1]{\left\|#1\right\|_{\rm M}}
\newcommand{\normaupw}[1]{\left|#1\right|_{\rm upw}}
\newcommand{\normacip}[1]{\left|#1\right|_{\rm cip}}
\newcommand{\normastab}[1]{\left\|#1\right\|_{\rm stab}}
\newcommand{\inpr}[1]{\left(\!\left(\,#1\,\right)\!\right)}
\newcommand{\normai}[1]{{\left\vert\kern-0.25ex\left\vert\kern-0.25ex\left\vert#1\right\vert\kern-0.25ex\right\vert\kern-0.25ex\right\vert}}
\newcommand{\nn}{\boldsymbol{n}}
\newcommand{\Edges}{\Sigma_h}
\newcommand{\EdgesB}{\Sigma_h^{\partial}}
\newcommand{\EdgesI}{\Sigma_h^{\rm {int}}}
\newcommand{\EdgesE}{\Sigma_h^E}
\newcommand{\Edgesloc}{\Sigma_h^{\omega_E}} 
\renewcommand{\epsilon}{\varepsilon}
\renewcommand{\theta}{\vartheta}
\renewcommand{\rho}{\varrho}
\renewcommand{\phi}{\varphi}
\newcommand{\zz}{\boldsymbol{\zeta}}
\newcommand{\uuht}{\widetilde{\boldsymbol{u}}_h}
\newcommand{\BB}{\boldsymbol{B}}
\newcommand{\HH}{\boldsymbol{H}}
\newcommand{\GG}{\boldsymbol{G}}
\newcommand{\TT}{\boldsymbol{\Theta}}
\newcommand{\uu}{\boldsymbol{u}}
\newcommand{\vv}{\boldsymbol{v}}
\newcommand{\ww}{\boldsymbol{w}}
\newcommand{\cc}{\boldsymbol{\chi}}
\newcommand{\ff}{\boldsymbol{f}}
\newcommand{\qq}{\boldsymbol{q}}
\newcommand{\pp}{\boldsymbol{p}}
\newcommand{\xx}{\boldsymbol{x}}
\newcommand{\pss}{\sigma_{\rm S}}
\newcommand{\psm}{\sigma_{\rm M}}
\newcommand{\ns}{\nu_{\rm S}}
\newcommand{\nm}{\nu_{\rm M}}
\newcommand{\bdma}{\mu_a}
\newcommand{\bdmc}{\mu_c}
\newcommand{\bdmJ}{\mu_{J_1}}
\newcommand{\bdmJJ}{\mu_{J_2}}
\newcommand{\Huno}{\boldsymbol{H}^1(\Omega)}
\newcommand{\Hunoenne}{\boldsymbol{H}^1_{\nn}(\Omega)}
\newcommand{\WWc}{\boldsymbol{W}}
\newcommand{\WWd}{\boldsymbol{W}^h_k}
\newcommand{\Hunozero}{\boldsymbol{H}^1_0(\Omega)}
\newcommand{\VVc}{\boldsymbol{V}}
\newcommand{\VVs}{\boldsymbol{V_*}}
\newcommand{\VVd}{\boldsymbol{V}^h_k}
\newcommand{\ZZc}{\boldsymbol{Z}}
\newcommand{\ZZs}{\boldsymbol{Z_*}}
\newcommand{\ZZd}{\boldsymbol{Z}^h_k}
\newcommand{\Qc}{Q}
\newcommand{\Qd}{Q^h_{k}}
\newcommand{\mesh}[1]{\texttt{mesh #1}}
\newcommand{\mfs}{\texttt{mfStab}}
\newcommand{\fs}{\texttt{fStab}}
\newcommand{\am}{a^{\rm M}}
\newcommand{\as}{a^{\rm S}}
\newcommand{\ash}{a^{\rm S}_h}
\newcommand{\astab}{\mathcal{A}_{\rm stab}}
\newcommand{\ccoe}{c_{\rm coe}}
\newcommand{\intcip}{\mathcal{I}_{\mathcal{O}}}
\newcommand{\Pzerok}[1]{\Pi_{#1}}
\newcommand{\PWWd}{\mathcal{I}_{\WWc}}
\newcommand{\PVVd}{\mathcal{I}_{\VVc}}
\newcommand{\omegazh}{\Omega_h^{\boldsymbol{\zeta}}}
\newcommand{\omegaEh}{\Omega_h^{M}}
\newcommand{\omegaz}{\omega_{\boldsymbol{\zeta}}}
\newcommand{\uui}{\PVVd \uu}
\newcommand{\vvi}{\PVVd \vv}
\newcommand{\eei}{\boldsymbol{e}_{\mathcal{I}}}
\newcommand{\eeh}{\boldsymbol{e}_h}
\newcommand{\BBi}{\PWWd \BB}
\newcommand{\HHi}{\PWWd \HH}
\newcommand{\EEi}{\boldsymbol{E}_{\mathcal{I}}}
\newcommand{\EEh}{\boldsymbol{E}_h}
\newcommand{\lm}{\Lambda_{\rm M}}
\newcommand{\ls}{\Lambda_{\rm S}}
\newcommand{\gm}{\Gamma_{\rm M}}
\newcommand{\gs}{\Gamma_{\rm S}}
\newcommand{\ts}{\Phi_{\rm S}}
\newcommand{\ps}{\Psi_{\rm S}}
\title{Robust Finite Elements for linearized Magnetohydrodynamics}
\author[1,2]{L. Beir\~ao da Veiga \thanks{lourenco.beirao@unimib.it}}
\author[1]{F. Dassi \thanks{franco.dassi@unimib.it}}
\author[3]{G. Vacca \thanks{giuseppe.vacca@uniba.it}}
\affil[1]{Dipartimento di Matematica e Applicazioni,
Universit\`a degli Studi di Milano Bicocca,
Via Roberto Cozzi 55 - 20125 Milano, Italy}
\affil[2]{IMATI-CNR, Via Ferrata 2, 20127 Pavia}
\affil[3]{Dipartimento di Matematica, 
Universit\`a degli Studi di Bari, 
Via Edoardo Orabona 4  - 70125 Bari, Italy}
\begin{document}
\maketitle

\abstract{
We introduce a pressure robust Finite Element Method for the linearized Magnetohydrodynamics equations in three space dimensions, which is provably quasi-robust also in the presence of high fluid and magnetic Reynolds numbers. The proposed scheme uses a non-conforming BDM approach with suitable DG terms for the fluid part, combined with an $H^1$-conforming choice for the magnetic fluxes. The method introduces also a specific CIP-type stabilization associated to the coupling terms. Finally, the theoretical result are further validated by numerical experiments.
}

% -------------------------------------------------------------------------------------
\section{Introduction}\label{sec:intro}
% -------------------------------------------------------------------------------------

The research area of magnetohydrodynamics (MHD) has attracted increasing attention in the community of computational mathematics in these recent years. Such kind of equations arise, for instance, in the study of plasmas and liquid metals, and have applications in numerous fields including geophysics, astrophysics, and engineering.
The combination of equations stemming from different areas (namely fluido-dynamics and electro-magnetism) leads to a variety of models, with different formulations and different finite element choices , yielding a very ample plethora of methods with associated assets and drawbacks. Such variety includes also other critical aspects such as the choice for explicit or implicit time advancement and a wide range of linear/nonlinear solver techniques (a non-exhaustive list of contributions being \cite{Gerbeau2,Guermond,Schotzau,Greif,Prohl,Houston,Dong,Perugia1,Perugia2,Hiptmair,Wacker, Badia, Zhang,VEM}).

The initial motivation of this article is the unsteady MHD problem (see for instance \cite{Gerbeau}) in a three-field formulation, here opportunely scaled for simplicity of exposition:
\begin{equation}
\label{eq:non-linear primale}
\left\{
\begin{aligned}
% & \text{ find $(\uu, p, \BB)$ such that}\\
&\begin{aligned}
\uu_t -  \ns \, \bdiver (\beps (\uu) ) + (\bnabla \uu ) \,\uu    +  \BB \times \bcurl(\BB)  - \nabla p &= \ff \qquad  & &\text{in $\Omega \times I$,} 
\\
\diver \, \uu &= 0 \qquad & &\text{in $\Omega \times I$,} 
\\
\BB_t +  \nm \, \bcurl (\bcurl (\BB) ) - \bcurl(\uu \times \BB)  &= \GG \qquad  & &\text{in $\Omega \times I$,} 
\\
\diver \, \BB &= 0 \qquad & &\text{in $\Omega \times I$,} 
\end{aligned}
\end{aligned}
\right.
\end{equation}
where the unknowns of the problem are $\uu$, $p$ and $\BB$, representing the velocity field, the fluid pressure and the magnetic induction, respectively; the parameters $\ns$, $\nm$ $\in \R^+$ denote the fluid and magnetic diffusive coefficients, while $\ff$, $\GG \in \boldsymbol{L}^2(\Omega)$ stand for the volumetric external forces.

In many engineering and physical applications of practical interest the (scaled) parameters $\ns$ and, possibly, $\nm$ are substantially small. For instance in aluminium electrolysis $\ns$ is about \texttt{1e-5} and $\nm$ about \texttt{1e-1} (see for instance~\cite{Berton, Armero}), while much smaller values also for $\nm$ can be reached in problems stemming from space weather prediction.

It is well known that finite element schemes in fluidodynamics may suffer from instabilities when the convective term is dominant with respect to the diffusive term. In 
such situations a stabilization is required in order to obtain reliable numerical results (among the wide literature we refer to the book \cite{volker:book}, the review \cite{garcia} or the few sample papers \cite{BH:1982,BFH:2006,OLHL:2009,MT:2015,BDV:NS-TD}). 

In the more complex case of the MHD equations, this phenomenon appears also with respect to the electro-magnetic part of the problem. If no specific care is taken in this respect, even for moderately small values of the diffusion parameters the accuracy of the velocity solution can strongly suffer. Another aspect which is recognized as critical in the modern literature of incompressible fluids discretization is that of pressure robustness, see for instance~\cite{linke-merdon:2016,john-linke-merdon-neilan-rebholz:2017}.

In this contribution we focus on a linearized version of \eqref{eq:non-linear primale} and develop an arbitrary order numerical scheme 
with the following characteristics: (1) pressure robust; (2) quasi-robust with respect to dominant advection, by which we mean that assuming a sufficiently regular solution we obtain estimates that are independent of both $\ns,\nm$ in an error norm that includes control on convection; (3) differently from SUPG or similar approaches, the method is suitable for extension to the time-dependent case via a standard time-stepping scheme.
To achieve these goals we start by using a divergence-free non-conforming BDM element for the velocity/pressure couple, combined with an upwinding DG technique for consistency and robustness. Such element is considered among the best for incompressible fluid mechanics (see for instance \cite{Hdiv1,Hdiv2,Hdiv3}). In order to keep the magnetic part simple and efficient, we assume a convex domain and an $H^1$-conforming discretization of the magnetic fluxes. Furthermore, to handle the fluido-magnetic coupling terms in a robust way in all regimes, we introduce a specific novel CIP-type stabilization. In the ensuing theoretical analysis we make use of a specific interpolant which satisfies suitable {\it local} inf-sup and approximation properties, thus allowing us to avoid a more involved Nitsche type imposition of the boundary conditions and, most importantly, a quasi-uniformity assumption on the mesh. Both our theory and numerical results are developed in three space dimensions.

The paper is organized as follows. In Section \ref{sec:cont} we introduce the continuous problem. 
In Section \ref{sec:notations}, after describing some notation, we introduce some preliminary result.
In Section \ref{sec:stab} we present our proposed stabilized scheme. In Section \ref{sec:theo} we prove the theoretical convergence results. Finally, numerical tests are shown in Section \ref{sec:num}.

%%%%%%%%%%%%%%%%%%%%%%%%%%%%%%%%%%%%%%%%%%%%%%%%%%%%%%%%
%Biblio significativa
%
%\begin{itemize}
%\item MHD book \cite{Gerbeau}, 
%
%\item MHD parameters: \cite{Berton}, \cite{Armero}
%
%\item discretization methods:
%
%\begin{itemize}
%
%\item MHD stabilizzato, dominio semplicemente connesso problema linearizzato \cite{Gerbeau2}
%
%\item convex or $C^{1,1}$ domain nodal FEM (sia velocità che parte magnetica) \cite{Guermond}
%
%\item non-convex, conforming FEM per NS +  N\'edelec per la parte magnetica \cite{Schotzau}
%
%\item non-convex, BDM per NS + N\'edelec  per la parte magnetica\cite{Greif}
%
%\item non stazionario, convex domain,  conforming FEM per NS  + N\'edelec  per la parte magnetica, splitting scheme \cite{Prohl}
%
%\item DG in non-convex domain, linearizzato, \cite{Houston}
%
%\item convex domain, conforming FEM, iteration per problema non lineare \cite{Dong}
%
%\item non stazionario, div-free velocità e parte magnetica, preconditioner \cite{Hiptmair}
%
%
%\item altra biblio nel folder \cite{Wacker, Badia, Zhang}
%
%% non citato per ora: benchmark \cite{benchmark}
%
%\item VEM \cite{VEM}
%
%\end{itemize}
%
%\end{itemize}
%%%%%%%%%%%%%%%%%%%%%%%%%%%%%%%%%%%%%%%%%%%%%%%%%%%%%%%%

% -------------------------------------------------------------------------------------
\section{Continuous problem}\label{sec:cont}
% -------------------------------------------------------------------------------------

We start this section with some standard notations. 
Let the computational domain $\Omega \subset \R^3$ be a convex polyhedron with regular boundary $\partial \Omega$ having outward pointing unit normal $\nn$.
%we denote with $\xx = (x_1, \, x_2, \, x_3)$ the independent variable.
% 
The symbols $\nabla$ and $\Delta$    
denote the gradient and the Laplacian operator for scalar functions, respectively, while  
$\bnabla$, $\beps$,  $\bcurl$ and $\diver$ 
denote   
the gradient, the symmetric gradient operator, the curl operator,
and  the divergence operator for vector valued functions. Finally, 
$\bdiver$ denotes the vector valued divergence operator for tensor fields. 

Throughout the paper, we will follow the usual notation for Sobolev spaces
and norms \cite{Adams:1975}.
Hence, for an open bounded domain $\omega$,
the norms in the spaces $W^r_p(\omega)$ and $L^p(\omega)$ are denoted by
$\|{\cdot}\|_{W^r_p(\omega)}$ and $\|{\cdot}\|_{L^p(\omega)}$, respectively.
Norm and seminorm in $H^{r}(\omega)$ are denoted respectively by
$\|{\cdot}\|_{r,\omega}$ and $|{\cdot}|_{r,\omega}$,
while $(\cdot,\cdot)_{\omega}$ and $\|\cdot\|_{\omega}$ denote the $L^2$-inner product and the $L^2$-norm (the subscript $\omega$ may be omitted when $\omega$ is the whole computational
domain $\Omega$).
For the functional spaces introduced above we use the bold symbols to denote the corresponding sets of vector valued functions.
%For a Banach space $V$ we denote with $V'$ the dual space of $V$.
Finally, we introduce the following spaces
\[
\begin{aligned}
%\Hunozero &:= \{\vv \in \Huno \,\,\, \text{s.t.} \,\,\, \vv =\boldsymbol{0} \,\,\, \text{on $\partial \Omega$} \} \,,
%\\
%%
\Hunoenne &:= \{\vv \in \Huno \,\,\, \text{s.t.} \,\,\, \vv \cdot \nn = 0 \,\,\, \text{on $\partial \Omega$} \} \,, \\
%\HH(\diver, \Omega) &:= \{\vv \in \boldsymbol{L}^2(\Omega) \,\,\,\text{s.t.} \,\,\, \diver \,\vv \in L^2(\Omega) \} \,, \
%%\
\HH_0(\diver, \Omega) &:= \{\vv \in \boldsymbol{L}^2(\Omega) \,\,\,\text{s.t.} \,\,\, \diver \,\vv \in L^2(\Omega) \,\,\, \text{and} \,\,\, \vv \cdot \nn = 0 \,\,\, \text{on $\partial \Omega$} \} \,.
\end{aligned}
\]
We consider the following linearized version of Problem~\eqref{eq:non-linear primale}:
\begin{equation}
\label{eq:linear primale}
\left\{
\begin{aligned}
& \text{ find $(\uu, p, \BB)$ such that}\\
&\begin{aligned}
\pss \, \uu -  \ns \, \bdiver (\beps (\uu) ) + (\bnabla \uu ) \,\cc    +  \TT \times \bcurl(\BB) - \nabla p &= \ff \qquad  & &\text{in $\Omega$,} 
\\
\diver \, \uu &= 0 \qquad & &\text{in $\Omega$,} 
\\
\psm \, \BB +  \nm \, \bcurl (\bcurl (\BB) ) - \bcurl(\uu \times \TT)  &= \GG \qquad  & &\text{in $\Omega$,} 
\\
\diver \, \BB &= 0 \qquad & &\text{in $\Omega$,} 
\end{aligned}
\end{aligned}
\right.
\end{equation}
coupled with the homogeneous boundary conditions
\begin{equation}
\label{eq:linear bc cond}
\uu = 0 \,, \quad 
\BB \cdot \nn = 0 \,,  \quad 
\bcurl(\BB) \times \nn = 0  \quad \text{on $\partial \Omega$,}
\end{equation}
where the new parameters $\pss$, $\psm \in \R^+$ represent the reaction coefficients;
$\cc,\TT$ in $\HH_0(\diver, \Omega) \cap \boldsymbol{L}^3(\Omega)$, with $\diver \, \cc =0$ in $\Omega$, represent respectively the fluid advective field and the magnetic advective field.

Notice that the third and fourth equations in \eqref{eq:linear primale} and the boundary conditions \eqref{eq:linear bc cond} yield the compatibility condition  
$(\GG \,, \nabla \psi)  = 0$ for all $\psi \in H^1(\Omega)$.

The proposed linear problem has a practical interest not only as a simplified model but also because it can be directly derived when a time integrator scheme is applied to the unsteady nonlinear problem \eqref{eq:non-linear primale}. In that case 
$\cc$ and $\TT$ correspond to $\uu^n$ and $\BB^n$ respectively (that are assumed to be known), and we compute the approximate solution at time step $n+1$.  Therefore, by avoiding the requirement $\diver\,\TT = 0$ we are implicitly allowing for a discretization that does not enforce the magnetic divergence constraint exactly. 

We now derive the variational formulation for Problem \eqref{eq:linear primale}.
Consider the following spaces
\begin{equation}
\label{eq:spazi_c}
\VVc := \Hunozero \,, 
\quad
\WWc := \Hunoenne \,,
\quad
\Qc  := L^2_0(\Omega) = \left\{ q \in L^2(\Omega) \quad \text{s.t.} \quad (q, \,1) = 0 \right\}  \,,
\end{equation}
representing the velocity field space, the magnetic induction space and the pressure space, respectively, endowed with the standard norms, and the forms
\begin{equation}
\label{eq:forme_c1}
\as(\uu,  \vv) :=  (\beps(\uu), \, \beps (\vv) ) \,,
\qquad
c(\uu, \vv) :=  \left( ( \bnabla \uu ) \, \cc ,\, \vv  \right) \,,
\end{equation}
and 
\begin{equation}
\label{eq:forme_c2}
\begin{aligned}
\am(\BB,  \HH) &:=   \left( \bcurl(\BB) ,\, \bcurl(\HH)\right) +
\left(\diver(\BB) \,, \diver(\HH) \right),
\\
b(\vv, q) &:=  (\diver \vv, \, q) \,,
\\
d(\HH, \vv) &:=  ( \bcurl (\HH ) \times \TT ,\, \vv) \,.
\end{aligned}
\end{equation}
%
%defined for all sufficiently regular $\uu$, $\vv$, $\BB$, $\HH$, $q$.
Let us introduce the kernel of the  bilinear form $b(\cdot,\cdot)$
that corresponds to the functions in $\VVc$ with vanishing divergence
\begin{equation}
\label{eq:Z}
\ZZc := \{ \vv \in \VVc \quad \text{s.t.} \quad \diver \, \vv = 0  \}\,.
\end{equation}
We consider the following variational problem
\begin{equation}
\label{eq:linear variazionale}
\left\{
\begin{aligned}
& \text{find $(\uu, p, \BB) \in \VVc \times \Qc \times \WWc$,  such that} 
\\
&\begin{aligned}
\pss (\uu, \vv) + \ns \as(\uu, \vv) + c(\uu, \vv)  
-d(\BB, \vv)  + b(\vv, p) 
&= 
(\ff, \vv)  
&\,\,\, & \text{for all $\vv \in \VVc$,} 
\\
b(\uu, q) &= 0 
&\,\,\, & \text{for all $q \in \Qc$,}
\\
\psm (\BB, \HH) + \nm \am(\BB, \HH) + d(\HH, \uu) 
&= 
(\GG, \HH)  
&\,\,\, & \text{for all $\HH \in \WWc$.}
\end{aligned}
\end{aligned}
\right.
\end{equation}

\begin{proposition}
Assume that the domain $\Omega$ is a convex polyhedron. Then
Problem \eqref{eq:linear variazionale} is well-posed. 
Additionally,  Problem \eqref{eq:linear variazionale} is a variational formulation of Problem \eqref{eq:linear primale}.
\end{proposition}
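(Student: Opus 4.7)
The plan is to cast Problem~\eqref{eq:linear variazionale} as a standard Brezzi saddle--point problem in the unknowns $((\uu,\BB),p)\in(\VVc\times\WWc)\times\Qc$, grouping the first and third weak equations by introducing
\[
\mathcal{A}((\uu,\BB),(\vv,\HH)) := \pss(\uu,\vv)+\ns\as(\uu,\vv)+c(\uu,\vv)+\psm(\BB,\HH)+\nm\am(\BB,\HH)-d(\BB,\vv)+d(\HH,\uu),
\]
with constraint form $b(\vv,q)$ and load $(\ff,\vv)+(\GG,\HH)$, and then invoking Brezzi's theorem.

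First I would establish coercivity of $\mathcal{A}$ on $\VVc\times\WWc$. Testing with $(\vv,\HH)=(\uu,\BB)$, the cross--coupling $-d(\BB,\uu)+d(\BB,\uu)$ cancels by skew--symmetry; the convection term $c(\uu,\uu)=\tfrac12(\cc,\bnabla|\uu|^2)$ vanishes after one integration by parts using $\diver\cc=0$ and the homogeneous trace $\uu|_{\partial\Omega}=0$. What remains is
\[
\pss\|\uu\|^2+\ns\|\beps(\uu)\|^2+\psm\|\BB\|^2+\nm\big(\|\bcurl\BB\|^2+\|\diver\BB\|^2\big).
\]
Korn's inequality on $\Hunozero$ controls $\|\uu\|_1$ from $\|\beps(\uu)\|$. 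The bound on $\|\BB\|_1$ requires the Friedrichs--type inequality
\[
\|\bnabla\HH\|\;\lesssim\;\|\bcurl\HH\|+\|\diver\HH\|+\|\HH\|\qquad\forall\,\HH\in\WWc,
\]
which is valid on a convex polyhedron. This is the main obstacle of the proof and is precisely where the convexity hypothesis on $\Omega$ enters essentially: on non-convex polyhedra such a regularity--type bound can fail.

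Next, continuity of all bilinear forms is standard: $c$ and $d$ are estimated by H\"{o}lder's inequality combined with the Sobolev embedding $H^1\hookrightarrow L^6$, using $\cc,\TT\in\boldsymbol{L}^3(\Omega)$. The classical inf--sup condition for $b(\cdot,\cdot)$ between $\Hunozero$ and $L^2_0(\Omega)$, equivalent to surjectivity of the divergence, completes the Brezzi hypotheses and yields existence, uniqueness and continuous dependence on the data.

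Finally, for the equivalence with~\eqref{eq:linear primale}, the forward direction is an integration by parts in each equation, exploiting the boundary conditions~\eqref{eq:linear bc cond}. Conversely, testing with $\vv,\HH\in\mathcal{D}(\Omega)^3$ recovers the first and third strong PDEs. The essential boundary conditions $\uu=0$ and $\BB\cdot\nn=0$ are built into $\VVc$ and $\WWc$, while the natural condition $\bcurl\BB\times\nn=0$ emerges from Green's formula in $\am$ when testing with $\HH\in\WWc$ of arbitrary tangential trace. The constraint $\diver\BB=0$ is finally obtained by taking the divergence of the recovered strong third equation and using the compatibility $\diver\GG=0$ noted just before the proposition.
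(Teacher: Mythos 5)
Your well-posedness argument is sound and essentially coincides with the paper's: coercivity from skew-symmetry of the coupling and of $c(\cdot,\cdot)$, Korn, the Friedrichs-type inequality on the convex domain, continuity via $H^1\hookrightarrow L^6$ and $\cc,\TT\in\boldsymbol{L}^3$, plus the Stokes inf-sup for $b$. Whether one invokes Brezzi on the full saddle point or restricts to the divergence-free kernel first is immaterial.

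The gap is in your derivation of $\diver\BB=0$. Testing the third equation with $\HH\in\mathcal{D}(\Omega)^3$ does not recover the third strong PDE of \eqref{eq:linear primale}: because $\am$ contains the grad-div term, the distributional equation you actually obtain is
\[
\psm\BB+\nm\,\bcurl(\bcurl\BB)-\nm\,\nabla(\diver\BB)-\bcurl(\uu\times\TT)=\GG\quad\text{in }\Omega,
\]
and taking its divergence yields only the \emph{interior} equation $\psm\,\theta-\nm\,\Delta\theta=0$ for $\theta:=\diver\BB$. Without a boundary condition on $\theta$ this does not force $\theta\equiv 0$. The variational formulation provides no such condition directly: for $\HH\in\WWc$ one has $\HH\cdot\nn=0$, so the boundary term $\int_{\partial\Omega}(\diver\BB)(\HH\cdot\nn)$ arising from $(\diver\BB,\diver\HH)$ vanishes identically and carries no information. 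One could try to extract the missing Neumann condition $\partial_{\nn}\theta=0$ from the full compatibility condition $(\GG,\nabla\psi)=0$ for all $\psi\in H^1(\Omega)$ --- which encodes $\GG\cdot\nn=0$ on $\partial\Omega$, not merely $\diver\GG=0$ in $\Omega$ as you use --- but this requires trace manipulations on $\nabla(\diver\BB)$ that are not justified for $\BB$ merely in $\WWc$. The paper circumvents all of this with a duality argument: solve the auxiliary Neumann problem $-\nm\Delta\phi+\psm\phi=-\diver\BB$, $\nabla\phi\cdot\nn=0$; convexity gives $\phi\in H^2(\Omega)$, hence $\nabla\phi\in\WWc$ is an admissible test function; inserting $\HH=\nabla\phi$ kills the curl and coupling terms, the compatibility condition kills the right-hand side, and an integration by parts (using $\BB\cdot\nn=0$) yields $\|\diver\BB\|^2=0$ directly. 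Note also that this step must precede, not follow, the identification of the recovered PDE with the third equation of \eqref{eq:linear primale}, since only after $\diver\BB=0$ does the spurious term $-\nm\nabla(\diver\BB)$ disappear.
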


\begin{proof}
We simply sketch the proof since it is derived using similar arguments to that in \cite[Proposition 3.18, Lemma 3.19]{Gerbeau}.

\noindent 
We first address the well-posedness of \eqref{eq:linear variazionale}.
Consider the form
\[
\begin{aligned}
\mathcal{S}((\uu, \BB) \,, (\vv, \HH)) :=
\pss (\uu, \vv) &+ \ns \as(\uu, \vv) + \psm (\BB, \HH) + 
\\
& + \nm \am(\BB, \HH)  + c(\uu, \vv)  -d(\BB, \vv) + d(\HH, \uu)
\end{aligned}
\]
for all $\uu$, $\vv \in \ZZc$ and $\BB$, $\HH \in \WWd$.
Employing the classical inf–sup condition for the Stokes equations \cite{boffi-brezzi-fortin:book}, the well-posedness of Problem \eqref{eq:linear variazionale} follows from the coercivity and continuity properties of the form $\mathcal{S}((\cdot, \cdot), (\cdot, \cdot))$.
Direct computations yield
\begin{equation}
\label{eq:well2}
\mathcal{S}((\vv, \HH) \,, (\vv, \HH)) =
\pss \Vert \vv \Vert^2 + \ns \Vert \beps (\vv) \Vert^2 + 
\psm \Vert\HH\Vert^2 +  \nm \Vert \bcurl(\HH)\Vert^2 
+  \nm \Vert \diver \HH\Vert^2 \,.
\end{equation} 
We now recall that if the domain $\Omega$ is a convex polyhedron the following embedding holds \cite[Theorem 3.9]{Girault-book} and \cite{Amrouche}
\begin{equation}
\label{eq:well3}
\Vert \bcurl(\HH)\Vert^2 +  \Vert \diver \HH\Vert^2 
\geq c_\Omega \Vert \HH \Vert_{\WWc}^2 \qquad \text{for all $\HH \in \WWc$}
\end{equation} 
where here and in the following 
$c_\Omega$ denotes a generic positive constant depending only on the domain $\Omega$, that may change at each occurrence.
Therefore \eqref{eq:well2}, bound \eqref{eq:well3}, the Poincar\'e inequality and the Korn inequality imply
the coercivity property
\[
\mathcal{S}((\vv, \HH) \,, (\vv, \HH)) 
\geq 
c_\Omega \min\{\pss \,, \ns \,,  \psm \,, \nm \}
(\Vert \vv\Vert_{\VVc}^2  + \Vert \HH\Vert_{\WWc}^2 ) \,.
\]
We omit the proof of the continuity of $\mathcal{S}(\cdot, \cdot)$  with respect to the same norm, which is easy to show.
We conclude that Problem \eqref{eq:linear variazionale} is well-posed.  

We now prove that \eqref{eq:linear variazionale} is a variational formulation for Problem \eqref{eq:linear primale}. It is straightforward to see that the first and the second equation in \eqref{eq:linear variazionale} are the weak form of the Oseen type equation associated with the strong formulation in \eqref{eq:linear primale} (first and second equation) coupled with the boundary conditions on $\partial \Omega$ \eqref{eq:linear bc cond} (first equation).
In order to derive from \eqref{eq:linear variazionale}  the magnetic divergence constraint $\diver \BB = 0$, let $\phi$ be the solution of the auxiliary problem
\begin{equation}
\label{eq:well5}
 - \nm \Delta \phi + \psm \phi =  - \diver \, \BB  \quad  \text{in $\Omega$,}
\qquad
\nabla \phi \cdot \nn  = 0  \quad  \text{on $\partial \Omega$.}
\end{equation}
Notice that since $\Omega$ is a convex polyhedron, $\phi \in H^2(\Omega)$, then $\nabla \phi \in \WWc$.
We set $\HH := \nabla \phi$ in \eqref{eq:linear variazionale}. Being $\bcurl(\nabla \phi) = \boldsymbol{0}$ and recalling the compatibility condition  
$(\GG \,, \nabla \psi)  = 0$ for all $\psi \in H^1(\Omega)$, we infer
$
\psm (\BB, \nabla \phi) + \nm (\diver \BB \,, \diver (\nabla \phi)) = 
0 \,.
$
Integrating by parts and using \eqref{eq:well5} we obtain
 $\diver \BB = 0$.
The third equation in \eqref{eq:linear variazionale} is then equivalent to
\begin{equation}
\label{eq:well6}
\psm (\BB, \HH) + \nm (\bcurl(\BB), \bcurl(\HH)) + d(\HH, \uu) = 
(\GG, \HH)  
\qquad \text{for all $\HH \in \WWc$,}
\end{equation}
that yields
%, taking $\HH \in \Hunozero \subset \WWc$, 
the weak formulation of the third equation in \eqref{eq:linear primale}.
%Moreover, since $\GG \in \boldsymbol{L}^2(\Omega)$, then $\bcurl(\bcurl(\BB)) \in \boldsymbol{L}^2(\Omega)$. 
The boundary conditions easily follow  integrating
\eqref{eq:well6} by parts.
%we recover the third boundary condition in \eqref{eq:linear bc cond}. 
\end{proof}

It is worth mentioning that the formulation \eqref{eq:linear variazionale} allows us to recover the divergence-free constraint for the magnetic induction $\BB$ directly in the variational problem without introducing a Lagrangian multiplier.

\begin{remark}A different variational formulation should be adopted when $\Omega$ is a non convex polyhedron. 
The above formulation is well-posed also if $\Omega$ is non convex, but 
the solution of Problem \eqref{eq:linear variazionale} may not be the physical solution of the real problem that presents, in principle,
singularities due to the re-entrant corners.
\end{remark}

%\[
%\begin{aligned}
%\max\{\ns, \pss\} &\ll 
%\max\{\Vert \cc \Vert_{L^\infty(\Omega)}\,,  
%\Vert \TT \Vert_{L^\infty(\Omega)}\}\,,
%\\
%\max\{\nm, \psm\} &\ll 
%\max\{\Vert \cc \Vert_{L^\infty(\Omega)}\,,  
%\Vert \TT \Vert_{L^\infty(\Omega)}\}\,,
%\qquad \text{(or $\lesssim$)}
%\end{aligned}
%\] 
%as it happens in many practical situations.

\section{Notations and preliminary theoretical results}
\label{sec:notations}

We now introduce some basic tools and notations that will be useful in the construction and the theoretical analysis of the proposed stabilized method.

%\subsection{Spatial decompositions}\label{sub:mesh}

Let $\{\Omega_h\}_h$ be a family of  conforming decompositions of $\Omega$ into tetrahedral elements $E$ of diameter $h_E$. We denote by 
$h := \sup_{E \in \Omega_h} h_{E}$ the mesh size associated with $\Omega_h$.

\noindent
Let $\mathcal{N}_h$ be the set of internal vertices of the mesh $\Omega_h$, and for any $\zz \in \mathcal{N}_h$ we set
\[
\omegazh:= \{E \in \Omega_h \, \, \, \text{s.t.} \,\, \, \zz \in E\}\,,
\quad
\omegaz := \cup_{E \in \omegazh} E \,,
%\quad
%N_{\zz} := {\rm{card}}(\omegazh) \,,
\quad
h_{\zz} := \text{diameter of $\omegaz$}
 \,.
\]

\noindent
We denote by $\Edges$ the set of faces of $\Omega_h$ divided into 
internal $\EdgesI$ and external $\EdgesB$ faces; 
for any $E \in \Omega_h$ we denote by $\EdgesE$ the set of the faces of $E$. Furthermore for any $f \in \Edges$ we denote with $h_f$ the diameter of $f$.

We make the following mesh assumptions. Note that the second condition \textbf{(MA2)}
is required \emph{only} for the analysis of the lowest order case (that is order $1$).

\smallskip
\noindent
\textbf{(MA1) Shape regularity assumption:}

\noindent
The mesh family $\left\{ \Omega_h \right\}_h$ is shape regular: it exists a  positive  constant $c_{\rm{M}}$ such that each element $E \in \{ \Omega_h \}_h$ is star shaped with respect to a ball of radius $\rho_E$ with  $h_E \leq c_{\rm{M}} \rho_E$. 

\smallskip
\noindent
\textbf{(MA2) Mesh agglomeration with stars macroelements:}

\noindent 
There exists a family of conforming meshes $\{ \widetilde{\Omega}_h \}_h$ of $\Omega$ with the following properties:
% (i) $\{ \widetilde{\Omega}_h \}_h$ satisfies \textbf{(MA1)};  
(i) it exists a positive constant $\widetilde{c}_{\rm{M}}$ such that each element $M \in \widetilde{\Omega}_h$ is a finite  (connected) agglomeration of elements in $\Omega_h$, i.e., it exists $\omegaEh \subset \Omega_h$ with ${\rm{card}}(\omegaEh) \leq \widetilde{c}_{\rm{M}}$ and $M = \cup_{E \in \omegaEh} E$;
(ii) for any $M \in \widetilde{\Omega}_h$ it exists $\zz \in \mathcal{N}_h$ such that $\omegaz \subseteq M$.
% with $ h_{M} \lesssim \widetilde{c}_{\rm{M}} h_{\zz}$.

\begin{remark}
\label{rm:mesh}
Assumption \cfan{(MA1)} is classical in FEM and is needed to derive optimal approximation properties for the polynomial spaces. 
Assumption \cfan{(MA2)} is needed only for $k=1$ and has a purely theoretical purpose (see Lemma \ref{lm:int-i} and Lemma \ref{lm:cip}).  However, it is easy to see that \cfan{(MA2)} is not restrictive. Given a mesh $\Omega_h$ we simply form the elements of $\widetilde{\Omega}_h$ by taking a sub-set of the internal nodes $\zz$ of $\Omega_h$ and building the associated ``stars'' $\omegaz$ such that there is no overlap. The remaining ``gap'' elements are then attached to the already formed stars.
\end{remark}

The mesh assumption \textbf{(MA1)} easily implies  the following property.

\noindent
\textbf{(MP1) local quasi-uniformity:}

\noindent
it exists a positive constant $c_{\rm{P}}$ depending on $c_{\rm{M}}$ such that for any $E \in \Omega_h$  and $\zz \in \mathcal{N}_h$
\[
\max_{E \in \Omega_h} \max_{f \in \EdgesE} \frac{h_E}{h_f} \leq c_{\rm{P}} \,,
\quad
{\rm{card}}(\Omega_h^{\zz}) \leq c_{\rm{P}} \,,
\qquad
\max_{E', E'' \in \Omega_h^{\zz}} \frac{h_E'}{h_{E''}} \leq c_{\rm{P}} \,,
\quad
\max_{E' \in \Omega_h^{\zz}} \frac{h_{\zz}}{h_{E'}} \leq c_{\rm{P}} \,.
\]

Whereas \textbf{(MA1)} and \textbf{(MA2)} entail the following property.

\noindent
\textbf{(MP2) macroelements and stars uniformity:}

\noindent
it exists a positive constant $\widetilde{c}_{\rm{P}}$ depending on $c_{\rm{M}}$ and $\widetilde{c}_{\rm{M}}$ such that for any $M \in \widetilde{\Omega}_h$ (referring to \textbf{(MA2)}) it holds
\[
\max_{E \in \omegaEh}  \frac{h_{M}}{h_E} \leq \widetilde{c}_{\rm{P}}\,,
\qquad
\frac{h_{M}}{h_{\zz}} \leq \widetilde{c}_{\rm{P}} 
 \quad \text{if  $\omegaz \subseteq M$.} 
\]

\noindent
For $m \in \N$ and for $S \subseteq \Omega_h$, we introduce the polynomial spaces 
\begin{itemize}
\item $\Pk_m(\omega)$ is the set of polynomials on $\omega$ of degree $\leq m$, with $\omega$ a generic set;
\item $\Pk_m(S) := \{q \in L^2\bigl(\cup_{E \in S}E \bigr) \quad \text{s.t.} \quad q|_{E} \in  \Pk_m(E) \quad \text{for all $E \in S$}\}$;
\item $\Pkc_m(S) := \Pk_m(S) \cap C^0\bigl(\cup_{E \in S}E \bigr)$.
\end{itemize}
For $s \in \R^+$ and  $p \in [1,+\infty]$ let us define the broken Sobolev spaces:
\begin{itemize}
\item $W^s_p(\Omega_h) := \{\phi \in L^2(\Omega) \quad \text{s.t.} \quad \phi|_{E} \in  W^s_p(E) \quad \text{for all $E \in \Omega_h$}\}$,
\end{itemize}
equipped with  the standard broken norm 
$\Vert \cdot \Vert_{W^s_p(\Omega_h)}$ 
and seminorm 
$\vert \cdot \vert_{W^s_p(\Omega_h)}$.
%\[
%\begin{aligned}
%\|\phi\|^p_{W^s_p(\Omega_h)} &:= \sum_{E \in \Omega_h} \|\phi\|^p_{W^s_p(E)}\,,
%&\quad
%|\phi|^p_{W^s_p(\Omega_h)} &:= \sum_{E \in \Omega_h} |\phi|^p_{W^s_p(E)}\,, 
%&\qquad \text{if $1 \leq p < \infty$,}
%\\
%\|\phi\|_{W^s_\infty(\Omega_h)} &:= \max_{E \in \Omega_h} \|\phi\|_{W^s_\infty(E)}\,,
%&\quad
%|\phi|_{W^s_\infty(\Omega_h)} &:= \max_{E \in \Omega_h} |\phi|_{W^s_\infty(E)}\,, 
%&\qquad \text{if $p= \infty$.}
%\end{aligned}
%\]

\noindent
For any $E \in \Omega_h$, $\nn_E$  denotes the outward normal vector to $\partial E$.
For any mesh face $f$ let $\nn_f$ be a fixed unit normal vector to the face $f$.
Notice that for any $E \in \Omega_h$ and $f \in \EdgesE$ it holds $\nn_f = \pm \nn_E$.
We assume that for any boundary face $f \in \EdgesB \cap \EdgesE$ it holds
$\nn_f = \nn_E = \nn$, i.e. $\nn_f$ is the outward to $\partial \Omega$.

\noindent
The jump and the average operators on $f \in \EdgesI$ are defined for every piecewise continuous function w.r.t. $\Omega_h$ respectively by
\[
\begin{aligned}
\jump{\phi}_f(\xx) &:=
\lim_{s \to 0^+} \left( \phi(\xx - s \nn_f) - \phi(\xx + s \nn_f) \right)
\\
\media{\phi}_f(\xx) &:= 
\frac{1}{2}\lim_{s \to 0^+} \left( \phi(\xx - s \nn_f) + \phi(\xx + s \nn_f) \right)
\end{aligned}
\]
and $\jump{\phi}_f(\xx) = \media{\phi}_f(\xx) = \phi(\xx)$ on $f \in \EdgesB$.

\noindent
Let $\mathcal{D}$ denote one of the differential operators $\bnabla$, $\beps$, $\bcurl$.
Then, $\mathcal{D}_h$ represents the broken operator defined for all $\boldsymbol{\phi} \in \HH^1(\Omega_h)$ as
$\mathcal{D}_h (\boldsymbol{\phi} )|_E := \mathcal{D} (\boldsymbol{\phi} |_E)$ for all $E \in \Omega_h$.

\noindent
Finally, given $m \in \N$, we denote with
$\Pzerok{m} \colon L^2(\Omega_h) \to \Pk_m(\Omega_h)$ the  $L^2$-projection operator onto the space of polynomial functions.
%\begin{equation}
%\label{eq:Pzerok}
%\int_{\Omega} q_m (w - \, \Pzerok{m}  w) \, {\rm d} \Omega = 0 \qquad  \text{for all $w \in L^2(\Omega_h)$  and $q_m \in \Pk_m(\Omega_h)$.} 
%\end{equation} 
%
The above definitions extend to vector valued and tensor valued functions.

% ----------------------------------------------------------------------------------------------------------------------
%\subsection{Preliminary theoretical results} \label{sub:pre-theo}
% ----------------------------------------------------------------------------------------------------------------------
%
%In the present section we introduce some useful tools that we will adopt in the analysis.
%

%

In the following the symbol $\lesssim$ will denote a bound up to a generic positive constant, independent of the mesh size $h$, of the diffusive coefficients $\ns$ and $\nm$, of the reaction coefficients $\pss$ and $\psm$, of the advective fields $\cc$ and $\TT$,  of the loadings  $\ff$ and $\GG$, of the problem solution $(\uu, p, \BB)$,  but which may depend on $\Omega$, on the order
of the method $k$ (introduced in Section \ref{sec:stab}), and on the mesh regularity constants $c_{\rm{M}}$ and  $\widetilde{c}_{\rm{M}}$ in Assumptions \textbf{(MA1)} and \textbf{(MA2)}.

We close this section mentioning a list of classical results (see for instance \cite{brenner-scott:book}) that will be useful in the sequel.

\begin{lemma}[Trace inequality]
\label{lm:trace}
Under the mesh assumption \cfan{(MA1)}, for any $E \in \Omega_h$ and for any  function $v \in H^1(E)$ it holds 
\[
\sum_{f \in \EdgesE}\|v\|^2_{f} \lesssim h_E^{-1}\|v\|^2_{E} + h_E\|\nabla v\|^2_{E} \,.
\]
\end{lemma}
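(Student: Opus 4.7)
The plan is the classical reference-element-plus-affine-rescaling argument. First, on a fixed reference tetrahedron $\widehat{E}$ the continuous trace embedding $H^1(\widehat{E}) \hookrightarrow L^2(\partial\widehat{E})$ yields
\[
\sum_{\widehat{f} \subset \partial\widehat{E}} \|\widehat{v}\|_{\widehat{f}}^2
\,\lesssim\, \|\widehat{v}\|_{\widehat{E}}^2 + \|\widehat{\nabla}\widehat{v}\|_{\widehat{E}}^2
\qquad \forall\, \widehat{v} \in H^1(\widehat{E}),
\]
with a constant depending only on $\widehat{E}$. This is a fixed, scale-independent estimate to which everything else will be reduced.

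Next, I would use the shape regularity assumption \textbf{(MA1)} to write each $E \in \Omega_h$ as the image $F_E(\widehat{E})$ under an affine map $F_E(\widehat{x}) = B_E \widehat{x} + b_E$. Shape regularity gives, uniformly in $h$ through $c_{\rm M}$, the bounds $\|B_E\| \lesssim h_E$, $\|B_E^{-1}\| \lesssim h_E^{-1}$ and $|\det B_E| \lesssim h_E^3$, together with matching lower bounds. Setting $\widehat{v} := v \circ F_E$, the standard change of variable formulae yield, for the volume terms,
\[
\|\widehat{v}\|_{\widehat{E}}^2 \lesssim h_E^{-3}\|v\|_E^2,
\qquad
\|\widehat{\nabla}\widehat{v}\|_{\widehat{E}}^2 \lesssim h_E^{-1}\|\nabla v\|_E^2,
\]
while the surface Jacobian of $F_E$ restricted to each face scales like $h_E^2$ under shape regularity, so that $\|v\|_f^2 \lesssim h_E^2 \|\widehat{v}\|_{\widehat{f}}^2$ for every $f \in \EdgesE$.

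Finally, applying the reference inequality to $\widehat{v}$, substituting the three scalings and using $\mathrm{card}(\EdgesE) = 4$, we would conclude
\[
\sum_{f \in \EdgesE} \|v\|_f^2
\,\lesssim\, h_E^2 \bigl( h_E^{-3} \|v\|_E^2 + h_E^{-1} \|\nabla v\|_E^2 \bigr)
\,=\, h_E^{-1} \|v\|_E^2 + h_E \|\nabla v\|_E^2,
\]
which is the desired estimate. There is no substantial obstacle here: the only care needed is in bookkeeping the exponents of $h_E$ and in invoking shape regularity to control $\|B_E^{-1}\|$ by $h_E^{-1}$ so that the gradient scaling is sharp.
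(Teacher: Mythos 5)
Your proof is correct and follows exactly the classical scaling argument (reference-element trace theorem plus affine change of variables) that the paper itself relies on by simply citing this as a standard result from the literature, without giving a proof. The only cosmetic remark is that the sharp gradient scaling $\|\widehat{\nabla}\widehat{v}\|_{\widehat{E}}^2 \lesssim h_E^{-1}\|\nabla v\|_E^2$ comes from $\|B_E\|\lesssim h_E$ together with the lower bound $|\det B_E|\gtrsim h_E^3$ (equivalently $\|B_E^{-1}\|\lesssim h_E^{-1}$), both guaranteed by \textbf{(MA1)}, so your bookkeeping is fine.
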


\begin{lemma}[Bramble-Hilbert]
\label{lm:bramble}
Under the mesh assumption \cfan{(MA1)}, let $m \in {\mathbb N}$. For any $E \in \Omega_h$ and for any  smooth enough function $\phi$ defined on $\Omega$, it holds 
\[
% \begin{aligned}
\Vert\phi - \Pzerok{m} \phi \Vert_{W^r_p(E)} \lesssim h_E^{s-r} \vert \phi \vert_{W^s_p(E)} 
\qquad  \text{$s,r \in \N$, $r \leq s \leq m+1$, $p \in [1, \infty]$.}
% \end{aligned}
\]
\end{lemma}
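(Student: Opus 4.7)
The plan is the classical affine-scaling argument to a fixed reference tetrahedron, combined with the polynomial-preservation property of $\Pzerok{m}$.

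First I would fix a reference tetrahedron $\hat E$ and, for each $E \in \Omega_h$, exploit the shape regularity assumption \cfan{(MA1)} (every $E$ is star-shaped w.r.t.\ a ball of radius $\rho_E \geq h_E/c_{\rm{M}}$) to build an affine bijection $F_E(\hat{\xx}) = B_E \hat{\xx} + b_E$ satisfying $\|B_E\| \lesssim h_E$ and $\|B_E^{-1}\| \lesssim h_E^{-1}$, with constants depending only on $c_{\rm{M}}$. Setting $\hat\phi := \phi \circ F_E$, routine change-of-variables inequalities give, for every $t \in \N$ and $p \in [1,\infty]$,
$$|\hat\phi|_{W^t_p(\hat E)} \simeq h_E^{t - 3/p}\,|\phi|_{W^t_p(E)},$$
with hidden constants depending only on $t$ and $c_{\rm{M}}$. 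Moreover, the defining identity $(\Pzerok{m}\phi, q)_E = (\phi, q)_E$ for all $q \in \Pk_m(E)$, combined with the fact that pull-back by $F_E$ is a bijection of $\Pk_m(E)$ onto $\Pk_m(\hat E)$, shows that the $L^2$-projection commutes with the pull-back: $\widehat{\Pzerok{m}\phi} = \hat\Pi_m \hat\phi$, where $\hat\Pi_m$ denotes the $L^2$-projection onto $\Pk_m(\hat E)$.

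Next I would work entirely on $\hat E$. The classical Bramble--Hilbert lemma (proved via a Poincar\'e-type inequality on Sobolev quotients by polynomials of degree $\leq m$) supplies some $\hat q \in \Pk_m(\hat E)$ with $\|\hat\phi - \hat q\|_{W^s_p(\hat E)} \lesssim |\hat\phi|_{W^s_p(\hat E)}$. Using $\hat\Pi_m \hat q = \hat q$ I split
$$\hat\phi - \hat\Pi_m \hat\phi = (\hat\phi - \hat q) - \hat\Pi_m(\hat\phi - \hat q).$$
The first summand is controlled in $W^r_p(\hat E)$ by the Bramble--Hilbert bound since $r \leq s$. For the second, the image lies in the finite-dimensional space $\Pk_m(\hat E)$, on which all Sobolev and Lebesgue norms are equivalent, whence
$$\|\hat\Pi_m(\hat\phi - \hat q)\|_{W^r_p(\hat E)} \lesssim \|\hat\Pi_m(\hat\phi - \hat q)\|_{L^1(\hat E)} \lesssim \|\hat\phi - \hat q\|_{L^1(\hat E)} \lesssim \|\hat\phi - \hat q\|_{L^p(\hat E)},$$
where the middle estimate uses that $\hat\Pi_m$ is bounded $L^1 \to L^\infty$ (its image being finite-dimensional, the operator is represented by integration against a fixed smooth kernel). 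This is the one slightly delicate point of the proof: $L^p$-stability of $\hat\Pi_m$ is not automatic for $p \neq 2$, but routing through $L^1$ and exploiting finite-dimensionality of $\Pk_m(\hat E)$ resolves the issue uniformly in $p$.

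Finally I would scale back, seminorm by seminorm. For each integer $t$ with $0 \leq t \leq r$, combining the reference estimate just obtained with the two-sided scaling gives
$$|\phi - \Pzerok{m}\phi|_{W^t_p(E)} \lesssim h_E^{-t + 3/p}\,|\hat\phi - \hat\Pi_m \hat\phi|_{W^t_p(\hat E)} \lesssim h_E^{-t + 3/p}\,|\hat\phi|_{W^s_p(\hat E)} \lesssim h_E^{s-t}\,|\phi|_{W^s_p(E)}.$$
Summing over $t = 0,\dots,r$ and using $h_E \lesssim \mathrm{diam}(\Omega)$ to absorb the lower-order powers of $h_E$ into $h_E^{s-r}$ yields the stated $W^r_p(E)$-norm bound. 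The overall obstacle, as indicated, is the treatment of general $p \in [1,\infty]$ for the $L^2$-projection; once this is handled via norm equivalence on $\Pk_m(\hat E)$, everything else is routine scaling bookkeeping.
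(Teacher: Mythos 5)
The paper does not prove this lemma: it is listed among ``classical results'' and simply cited from the Brenner--Scott reference, so there is no in-paper argument to compare against. Your proof is the standard one that the cited literature supplies --- affine scaling to a reference tetrahedron, commutation of the $L^2$-projection with the pull-back, Deny--Lions/Bramble--Hilbert on the reference element, and norm equivalence on $\Pk_m(\hat E)$ to get $L^p$-stability of $\hat\Pi_m$ uniformly in $p\in[1,\infty]$ --- and it is correct, including the scaling bookkeeping $h_E^{t-3/p}$ in three dimensions and the absorption of lower-order seminorms via $h_E\lesssim \mathrm{diam}(\Omega)$. You also correctly isolated the only non-trivial point, namely that $L^p$-boundedness of the $L^2$-projection for $p\neq 2$ must be routed through the finite-dimensionality of $\Pk_m(\hat E)$.
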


\begin{lemma}[Inverse estimate]
\label{lm:inverse}
Under the mesh assumption \cfan{(MA1)}, let $m \in {\mathbb N}$. 
Then for any $E \in \Omega_h$ and for any $p_m \in \Pk_m(E)$ it holds 
\[
\|p_m\|_{W^s_p(E)} \lesssim h_E^{-s} \|p_m\|_{L^p(E)} 
\]
where the involved constant only depends on $m,s,p,c_{\rm{M}}$.
\end{lemma}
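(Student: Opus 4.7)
The plan is to prove the estimate by a standard scaling argument to a reference element. First, I would fix a reference tetrahedron $\hat{E}$ (say the unit simplex) and, for each $E \in \Omega_h$, construct an affine diffeomorphism $F_E(\hat{\xx}) = B_E\hat{\xx} + b_E$ mapping $\hat{E}$ onto $E$. The shape regularity assumption \textbf{(MA1)} (every element is star-shaped with respect to a ball of radius comparable to $h_E$) supplies the classical geometric bounds
\[
\|B_E\| \lesssim h_E, \qquad \|B_E^{-1}\| \lesssim h_E^{-1}, \qquad |\det B_E| \simeq h_E^3,
\]
with constants depending only on $c_{\rm{M}}$.

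Next, I would pull back $p_m$ to the reference element by setting $\hat{p}_m := p_m \circ F_E \in \Pk_m(\hat{E})$. Since $\Pk_m(\hat{E})$ is a finite dimensional space, on it the seminorm $|\cdot|_{W^s_p(\hat{E})}$ is dominated by the $L^p(\hat{E})$-norm up to a constant depending only on $m$, $s$, $p$:
\[
\|\hat{p}_m\|_{W^s_p(\hat{E})} \lesssim \|\hat{p}_m\|_{L^p(\hat{E})}.
\]
This is the only non-geometric ingredient and follows from the equivalence of norms in finite dimension.

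Then I would transfer this back to $E$ by a change of variables. Direct computation yields
\[
\|\hat{p}_m\|_{L^p(\hat{E})}^p = |\det B_E|^{-1}\,\|p_m\|_{L^p(E)}^p,
\]
while the chain rule, applied up to order $s$ and combined with $\|B_E^{-1}\|^s \lesssim h_E^{-s}$, gives
\[
|p_m|_{W^s_p(E)}^p \lesssim |\det B_E|\, h_E^{-sp}\, |\hat{p}_m|_{W^s_p(\hat{E})}^p.
\]
Inserting these two scalings into the reference estimate and summing over $s' \leq s$ yields $\|p_m\|_{W^s_p(E)} \lesssim h_E^{-s}\|p_m\|_{L^p(E)}$, which is the claim.

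The proof is routine and I do not expect any substantial obstacle; the only care required is the bookkeeping of the powers of $h_E$ produced by the Jacobian $|\det B_E|$ and by the iterated chain rule for derivatives of order up to $s$. For a real (non-integer) $s$ one would use the Sobolev–Slobodeckij double-integral seminorm, apply the same affine change of coordinates in both variables, and track the factor $|\det B_E|^2\,h_E^{-sp-d}$ that arises; the rest of the argument is unchanged.
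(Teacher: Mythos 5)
Your argument is correct and is precisely the standard scaling proof that the paper implicitly relies on: the lemma is stated without proof and attributed to classical references (Brenner--Scott), where the affine map to a reference simplex, norm equivalence on the finite-dimensional space $\Pk_m(\hat{E})$, and the Jacobian/chain-rule bookkeeping are carried out exactly as you describe. The geometric bounds $\|B_E\|\lesssim h_E$, $\|B_E^{-1}\|\lesssim h_E^{-1}$, $|\det B_E|\simeq h_E^3$ do follow from \textbf{(MA1)} with constants depending only on $c_{\rm M}$, so no gap remains.
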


\begin{remark}
\label{rm:jumpmedia}
For any face $f \in \Edges$, let $\jumpmedia{\cdot}_f$ denote the jump or the average operator on the face $f$.
We notice that for any $\mathbb{K} \in [L^{\infty}(\Omega_h)]^{3 \times 3}$ and for any $\ww \in \boldsymbol{H}^1(\Omega_h)$ and $\alpha \in \mathbb{Z}$, mesh assumption \cfan{(MA1)} and  Lemma \ref{lm:trace} yield the following estimate
\begin{equation}
\label{eq:utile-jump1}
\begin{aligned}
\sum_{f \in \Edges} h_f^\alpha \|\jumpmedia{\mathbb{K} \ww}_f\|^2_{f}
&\lesssim  
\sum_{E \in \Omega_h} h_E^\alpha \|\mathbb{K}\|^2_{L^\infty(E)} 
\sum_{f \in \EdgesE} \|\ww\|^2_{f}
\\
& \lesssim  
\sum_{E \in \Omega_h} \|\mathbb{K}\|^2_{L^\infty(E)}  
\left( h_E^{\alpha-1} \|\ww\|^2_{E} + h_E^{\alpha+1} \|\bnabla \ww\|^2_{E} \right) \,.
\end{aligned}
\end{equation}
In particular if $\ww \in [\Pk_m(\Omega_h)]^3$ by Lemma \ref{lm:inverse} it holds that
\begin{equation}
\label{eq:utile-jump}
\sum_{f \in \Edges}  h_f^\alpha \|\jumpmedia{\mathbb{K} \ww}_f\|^2_{f}
\lesssim  
\sum_{E \in \Omega_h}  h_E^{\alpha-1} \|\mathbb{K}\|^2_{L^\infty(E)}   \|\ww\|^2_{E} \,.
\end{equation}
\end{remark}

\section{Stabilized finite element method}
\label{sec:stab}

In this section we describe the proposed stabilized method and we prove some technical results that will be useful in the convergence analysis in Section \ref{sub:err}.

%------------------------------------------------------------------------------------------------------------
\subsection{Discrete spaces and interpolation analysis}
\label{sub:spaces}

Let $k \geq 1$ be the order of the method, then we consider the following discrete spaces
\begin{equation}
\label{eq:spazi_d}
\VVd := [\Pk_k(\Omega_h)]^3 \cap \HH_0(\diver), \,\,\, 
\WWd := [\Pkc_k(\Omega_h)]^3 \cap \Hunoenne, \,\,\, 
\Qd  := \Pk_{k-1}(\Omega_h) \cap L^2_0(\Omega),
\end{equation}
approximating the velocity field space $\VVc$, the magnetic induction space $\WWc$ and the pressure space $\Qc$ respectively. 

Notice that in the proposed method we adopt the $\HH(\diver)$-conforming 
$\boldsymbol{{\rm BDM}}_k$ element \cite{BDM} for the approximation of the velocity space
that provides exact divergence-free discrete velocity,
and preserves the pressure-robustness of the resulting scheme \cite{Hdiv1,Hdiv2,Hdiv3}.
%
%The consistency of the momentum equation will be enforced by a standard DG approach (see Section \ref{sub:forms}).
%
Let us introduce the discrete kernel
\begin{equation}
\label{eq:Z_h}
\ZZd := \{ \vv_h \in \VVd \quad \text{s.t.} \quad \diver \, \vv_h = 0  \}\,.
\end{equation}

We now define the interpolation operators $\PVVd$ and $\PWWd$, acting on the spaces $\VVd$ and $\WWd$ respectively, satisfying optimal approximation estimates and suitable \emph{local} orthogonality properties that will be instrumental to prove the convergence result in Section \ref{sub:err} (without the need to require a quasi-uniformity property on the meshes sequence).  
For what concerns the operator $\PVVd$, we recall from \cite{BDM} the  following result.

\begin{lemma}[Interpolation operator on $\VVd$]
\label{lm:int-v}
Under the Assumption \cfan{(MA1)} let $\PVVd \colon \VVc \to \VVd$ be the interpolation operator defined in equation (2.4) of \cite{BDM}.
%%%
%for all $\vv \in \VVc$ by:
%\begin{equation}
%\label{eq:PVVd}
%\left \{
%\begin{aligned}
%&\int_{f}   (\vv - \, \PVVd  \vv) \cdot \nn_f p_k \, {\rm d} f = 0 &\quad  &\text{for all $p_k \in \Pk_k(f)$ and for all $f \in \EdgesI$,}
%\\
%&\int_{\Omega}  (\vv - \, \PVVd  \vv)\cdot \nabla_h p_{k-1}  \, {\rm d} \Omega = 0 &\quad  &\text{for all $p_{k-1} \in \Pk_{k-1}(\Omega_h)$,}
%\\
%&\int_{\Omega}  (\vv - \, \PVVd  \vv)\cdot \boldsymbol{z}_h  \, {\rm d} \Omega = 0 &\quad  &\text{for all $\boldsymbol{z}_h \in \ZZd$ s.t. $(\boldsymbol{z}_h \cdot \nn_f)|_f = 0$ for all $f \in \Edges$.}
%\end{aligned} 
%\right .
%\end{equation} 
%%%%
%%
The following hold

\noindent
$(i)$ if $\vv \in \ZZc$ then $\vvi \in \ZZd$;

\noindent
$(ii)$ for any $\vv \in \VVc$
\begin{equation}
\label{eq:orth-v}
\left(\vv - \, \vvi, \,  \pp_{k-1} \right) = 0 \quad \text{for all $\pp_{k-1} \in [\Pk_{k-1}(\Omega_h)]^3$;}
\end{equation}

\noindent
$(iii)$ for any $\vv \in \VVc \cap \HH^{s+1}(\Omega_h)$, with $0 \leq s \leq k$, for all $E \in \Omega_h$, it holds
\begin{equation}
\label{eq:int-v}
\vert \vv - \vvi \vert_{m,E} \lesssim h_E^{s+1-m} \vert \vv \vert_{s+1,E} 
\qquad \text{for $0\leq m\leq s+1$.}
\end{equation}
\end{lemma}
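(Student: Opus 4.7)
The plan is to reduce each of the three claims to standard properties of the classical BDM interpolant that are recalled (and proved) in the reference \cite{BDM}; the present lemma is essentially a repackaging of those properties in the notation of the paper, so the strategy is to identify, for each item, which degree of freedom or commuting property is responsible.

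First I would recall the BDM degrees of freedom that define $\PVVd$: on each tetrahedron $E$ they consist of face moments against $\Pk_k(f)$ (giving $\vv_h\cdot\nn_f$ on $f$) together with interior moments against a suitable polynomial bubble space whose divergences span $\Pk_{k-1}(E)/\Pk_0(E)$ and whose $\bcurl$'s provide the remaining rotational degrees of freedom. From this, item $(i)$ follows via the standard commuting diagram $\diver\circ\PVVd=\Pzerok{k-1}^{\diver}\circ\diver$: if $\vv\in\ZZc$ then $\diver\vv=0$, so $\diver(\PVVd\vv)$ equals the $L^2$-projection of $0$ onto $\Pk_{k-1}(\Omega_h)$, which vanishes, and the normal trace continuity built into $\HH_0(\diver)$-conformity ensures $\PVVd\vv\in\ZZd$.

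For item $(ii)$, I would argue elementwise. Writing $\pp_{k-1}=\bnabla q_k + \boldsymbol{r}_{k-1}$ with $\boldsymbol{r}_{k-1}$ in the $L^2$-orthogonal complement of $\bnabla\Pk_k(E)$ inside $[\Pk_{k-1}(E)]^3$, the component along $\boldsymbol{r}_{k-1}$ is killed because the interior BDM degrees of freedom precisely match test functions in this complement (this is the usual defining property of the BDM interior moments). The gradient component is handled by integration by parts: $(\vv-\PVVd\vv,\bnabla q_k)_E = -(\diver(\vv-\PVVd\vv),q_k)_E + (\vv\cdot\nn_E-\PVVd\vv\cdot\nn_E,q_k)_{\partial E}$, where the volume term vanishes by the commuting property $\diver\PVVd\vv=\Pzerok{k-1}\diver\vv$ and the boundary term vanishes face by face thanks to the definition of the face moments against $\Pk_k(f)\supseteq\Pk_k(f)\ni q_k|_f$. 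Summing over elements yields \eqref{eq:orth-v}.

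For item $(iii)$, the proof is the classical scaling/Bramble–Hilbert argument: on a reference tetrahedron one shows that $\widehat{\PVVd}$ is bounded on $\HH^{s+1}(\widehat E)$ (its defining functionals are continuous on that space for $s\geq 0$ thanks to the trace theorem and Lemma \ref{lm:trace}) and preserves polynomials of degree $\leq k$; then Lemma \ref{lm:bramble} and a Piola transform back to $E$, combined with the shape-regularity in \textbf{(MA1)}, deliver the estimate \eqref{eq:int-v}. The only subtlety worth flagging is the $m=0$, $s=0$ case in three dimensions, where the face moments require traces in $L^2(f)$ of $H^1(E)$ functions; this is exactly the regime covered by the BDM analysis in \cite{BDM} and causes no issue here.

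The main obstacle is largely notational rather than mathematical: one has to be careful that the commuting diagram and the local orthogonality hold in the precise form \eqref{eq:orth-v} against the full $[\Pk_{k-1}(\Omega_h)]^3$, not merely against divergence-free or rotation-free test fields. Since the BDM interior degrees of freedom are designed exactly to control both the gradient and the complementary polynomial directions up to degree $k-1$, this is indeed the case, but making the decomposition of $\Pk_{k-1}(E)$ explicit is the key technical check before invoking \cite{BDM}.
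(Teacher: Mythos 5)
First, a point of comparison: the paper gives no proof of this lemma at all --- it is recalled verbatim from equation (2.4) of \cite{BDM} --- so your sketch has to be measured against the standard literature argument rather than against anything in the text. Your treatment of $(i)$ (the commuting diagram $\diver\circ\PVVd=\Pzerok{k-1}\circ\diver$ plus normal-trace conformity) and of $(iii)$ (polynomial invariance, boundedness of the local functionals, scaling and Bramble--Hilbert) is exactly that standard argument and is fine.

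There is, however, a genuine gap in your argument for $(ii)$, precisely at the point you yourself flag as ``the key technical check''. In the gradient part you integrate by parts and claim that the volume term $-(\diver(\vv-\vvi),q_k)_E$ vanishes by the commuting property $\diver\vvi=\Pzerok{k-1}\diver\vv$. But that property only says that $\diver(\vv-\vvi)$ is $L^2(E)$-orthogonal to $\Pk_{k-1}(E)$, whereas $q_k$ ranges over $\Pk_k(E)$: the surviving contribution is $\bigl((I-\Pzerok{k-1})\diver\vv,\,(I-\Pzerok{k-1})q_k\bigr)_E$, which is nonzero in general. A concrete failure: for $k=1$ all degrees of freedom are face moments, and taking $\vv=(x^2/2,0,0)$ locally one finds $(\vv-\vvi,\bnabla x)_E=-\int_E(x-\bar{x})^2\,dx\neq 0$, so orthogonality even to constants fails. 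Indeed, a dimension count shows that no interpolant onto $[\Pk_k(E)]^3$ can simultaneously match all face moments against $\Pk_k(f)$ and be $L^2(E)$-orthogonal to all of $[\Pk_{k-1}(E)]^3$ for arbitrary $\vv$; the two sets of conditions become compatible exactly when $\diver\vv|_E\in\Pk_{k-1}(E)$, in particular for $\vv\in\ZZc$. That is the only situation in which the paper ever invokes \eqref{eq:orth-v} (it is applied to $\eei=\uu-\uui$ with $\uu$ divergence-free), so your route does establish the property in the form actually needed, but the step as written is not justified in the stated generality. A secondary, repairable imprecision: the complement of $\bnabla\Pk_k(E)$ inside $[\Pk_{k-1}(E)]^3$ that the interior BDM moments actually control is the N\'ed\'elec-type complement (sums of homogeneous fields $\pp$ with $\pp\cdot\xx=0$), not the $L^2$-orthogonal one, and for $k\ge 2$ the interior moment space is strictly larger than either, so ``precisely match'' is not accurate, though that part of the decomposition can be fixed without changing the conclusion.
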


\begin{remark}
\label{rm:jumpmediaeei}
With the same notations of Remark \ref{rm:jumpmedia}, 
for any $\vv \in \VVc \cap \HH^{s+1}(\Omega_h)$, with $0 \leq s \leq k$
combining \eqref{eq:utile-jump1} and \eqref{eq:int-v}, for $\alpha = -1, 0, 1$, the following holds
\begin{equation}
\label{eq:utilejump2}
\begin{aligned}
\sum_{f \in \Edges} h_f^\alpha \|\jumpmedia{\vv - \vvi}_f\|^2_{f} +
\sum_{f \in \Edges} h_f^{\alpha+2} \|\jumpmedia{\bnabla_h(\vv - \vvi)}_f\|^2_{f}
&\lesssim   
h^{2s + 1 + \alpha} \vert \vv \vert_{s+1,\Omega_h}^2 \,. 
\end{aligned}
\end{equation}
\end{remark}

\begin{lemma}[Interpolation operator on $\WWd$]
\label{lm:int-i}
Let Assumption \cfan{(MA1)} hold. Furthermore, if $k=1$ let also Assumption \cfan{(MA2)} hold.
Then there exists an interpolation operator $\PWWd \colon \WWc \to \WWd$
satisfying the following

\noindent
$(i)$ for any $\HH \in \WWc$
\begin{equation}
\label{eq:int-orth}
\left( \HH - \HHi, \, \qq_{k-1} \right) = 0 
\qquad
\text{for any $\qq_{k-1} \in [\mathbb{O}_{k-1}(\Omega_h)]^3$}
\end{equation}
where
\begin{equation}
\label{eq:mathbbO}
\mathbb{O}_{k-1}(\Omega_h) := 
\Pkc_{k-1}(\Omega_h)  \quad  \text{for $k>1$,}
\qquad
\mathbb{O}_{k-1}(\Omega_h) := \Pk_{0}(\widetilde{\Omega}_h)  \quad \text{for $k=1$;}
\end{equation}

\noindent
$(ii)$ for any $\HH \in \WWc \cap \HH^{s+1}(\Omega_h)$ with $0 \leq s \leq k$, for $\alpha=0,1,2$, it holds

\begin{equation}
\label{eq:err-int}
\sum_{E \in \Omega_h} h_E^{-\alpha} \Vert \HH - \HHi \Vert_E^2
\lesssim h^{2s+2-\alpha} \vert \HH \vert_{s+1,\Omega_h}^2 \,,
\quad
\Vert \bnabla (\HH - \HHi) \Vert \lesssim h^{s} \vert \HH \vert_{s+1,\Omega_h} \,. 
\end{equation}

\end{lemma}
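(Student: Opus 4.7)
The plan is to write $\PWWd = \mathcal{I}_{\rm SZ} + \delta_h$, where $\mathcal{I}_{\rm SZ}\colon \WWc \to \WWd$ is a Scott-Zhang type interpolant (preserving $\HH \cdot \nn = 0$ on $\partial\Omega$ and satisfying the optimal local estimates of (ii), see e.g.~\cite{brenner-scott:book}), and $\delta_h$ is a correction in a locally supported subspace $\mathbb{B}_h \subseteq \WWd$ that enforces (i). The Scott-Zhang step already delivers (ii) for $\mathcal{I}_{\rm SZ}\HH$; it then remains to construct $\delta_h$ with a local $L^2$-control of the form $\|\delta_h\|_M \lesssim \|\HH - \mathcal{I}_{\rm SZ}\HH\|_M$ on each macroelement/element patch $M$, so that the triangle inequality preserves the optimal approximation order.

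The correction is defined by
\begin{equation*}
(\delta_h, \qq_{k-1}) = (\HH - \mathcal{I}_{\rm SZ}\HH,\, \qq_{k-1}) \qquad \forall \, \qq_{k-1} \in [\mathbb{O}_{k-1}(\Omega_h)]^3,
\end{equation*}
and its well-posedness rests on a \emph{local} inf-sup compatibility between $\mathbb{B}_h$ and $[\mathbb{O}_{k-1}(\Omega_h)]^3$. In the delicate case $k=1$, where $\mathbb{O}_0 = \Pk_0(\widetilde{\Omega}_h)$ reduces to macroelement constants, I exploit Assumption \textbf{(MA2)} to pick, for every macroelement $M \in \widetilde{\Omega}_h$, an internal vertex $\zz$ with $\omegaz \subseteq M$, and set
\begin{equation*}
\mathbb{B}_h := \mathrm{span}\bigl\{\phi_{\zz}\,\boldsymbol{e}_i \, : \, M \in \widetilde{\Omega}_h,\; \zz \text{ its star vertex},\; i=1,2,3\bigr\},
\end{equation*}
where $\phi_{\zz} \in \Pkc_1(\Omega_h)$ is the nodal Lagrange basis at $\zz$ (supported in $\omegaz \subseteq M$ and vanishing on $\partial\Omega$ by internality), and $\boldsymbol{e}_i$ is the canonical basis of $\R^3$. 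Distinct macroelements use distinct star vertices, so the global system for $\delta_h$ decouples into independent $3 \times 3$ diagonal blocks with diagonal entries $\int_M \phi_{\zz} > 0$; by \textbf{(MP2)} one has $\int_M \phi_{\zz} \simeq |M|$, which gives the uniform local $L^2$-stability. For $k>1$ I would play the analogous game, pairing bubble functions in $\Pkc_k$ against $\Pkc_{k-1}$ and deducing the local inf-sup from a standard scaling argument on the reference element.

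Once the $L^2$-stability of $\delta_h$ is in hand, Lemma~\ref{lm:inverse} supplies the $H^1$-bound $\|\bnabla \delta_h\|_E \lesssim h_E^{-1}\|\delta_h\|_E$ on each element, and summing with the Scott-Zhang error yields (ii) with the correct powers of $h$; (i) holds by the very definition of $\delta_h$. The main technical obstacle is precisely this low-order case: a plain Scott-Zhang interpolant has no access to the macroelement structure, so orthogonality against $\Pk_0(\widetilde{\Omega}_h)$ must be patched by hand, and Assumption \textbf{(MA2)} is tailored to provide disjoint supports (the stars $\omegaz$ inside each $M$) for independent local corrections, which is exactly what lets us avoid any global quasi-uniformity requirement on $\Omega_h$.
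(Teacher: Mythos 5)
Your construction is in essence the paper's: the paper also defines $\HHi := \HH_{\rm SZ} + \Pi(\HH - \HH_{\rm SZ})$, where the correction $\Pi(\HH-\HH_{\rm SZ})$ enforces the orthogonality \eqref{eq:int-orth} and is built precisely from interior hat functions $\phi_{\zz}$ (scaled as $h_{\zz}^2\phi_{\zz}\qq_{\zz}$), with Assumption \textbf{(MA2)} invoked exactly as you do to supply, for $k=1$, one star $\omegaz\subseteq M$ per macroelement. The only structural difference is packaging: the paper realizes the correction as a saddle-point projection $\Pi$ in the $h_E^{-2}$-weighted inner product, whose well-posedness rests on a \emph{global} inf-sup assembled from the local ones, whereas you solve a square system in an explicit bubble space $\mathbb{B}_h$. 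For $k=1$ your version is complete and arguably more transparent than the paper's, since the system genuinely decouples into $3\times3$ blocks.

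The gap is in your $k>1$ case. There $\mathbb{O}_{k-1}(\Omega_h)=\Pkc_{k-1}(\Omega_h)$ consists of \emph{continuous} piecewise polynomials, so the test functions are coupled across element interfaces and vertex stars overlap: the matrix of your square system $(\delta_h,\qq_{k-1})=(\HH-\HH_{\rm SZ},\qq_{k-1})$ is not block diagonal, and a ``local inf-sup from a standard scaling argument on the reference element'' does not by itself yield uniform invertibility of the globally coupled system, nor the local bound $\Vert\delta_h\Vert_M\lesssim\Vert\HH-\HH_{\rm SZ}\Vert_M$ you rely on (the solution on one patch feels the data on neighbouring patches). What is actually needed — and what the paper proves in its Steps 1--2 — is a global inf-sup between $[\Pkc_k(\Omega_h)]^3\cap\HH^1_0(\Omega)$ and $[\Pkc_{k-1}(\Omega_h)]^3$ measured in the mesh-weighted norms $\normai{\cdot}$ and $\normai{\cdot}_{*}$ (i.e.\ $\sum_E h_E^{-2}\Vert\cdot\Vert_E^2$ and $\sum_E h_E^{2}\Vert\cdot\Vert_E^2$), obtained by summing the local contributions $h_{\zz}^2\phi_{\zz}\qq|_{\omegaz}$ over all interior vertices and using \textbf{(MP1)} to control the finite overlap. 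The weighting is not cosmetic: stability in the plain $L^2$ norm would not give the $\alpha=1,2$ cases of \eqref{eq:err-int} without a quasi-uniformity assumption, which the lemma is specifically designed to avoid. Your argument becomes correct once you replace the elementwise scaling claim by this weighted, overlap-summed global stability estimate (or, equivalently, adopt the paper's mixed formulation).
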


\begin{proof}
For any $\lambda \in \mathcal{N}_h \cup \widetilde{\Omega}_h$  we define the following mesh-dependent bilinear form and norms:
\[
\inpr{\uu, \, \vv}_{\lambda} := \sum_{E \in \Omega_h^\lambda} h_E^{-2}(\uu, \, \vv)_E \,,
\quad
\normai{\uu}^2_{\lambda} := \inpr{\uu, \, \uu}_{\lambda}\,,
\quad 
\normai{\uu}^2_{*,\lambda} := \sum_{E \in \Omega_h^\lambda} h_E^2 \Vert \uu \Vert_E^2 \,.
\]
%for all $\uu, \vv \in \boldsymbol{L}^2\bigl(\cup_{E \in\Omega_h^\lambda}E \bigr)^3$.
%
When the subscript $\lambda$ is omitted, in the previous form and norms the subset $\Omega_h^\lambda$ is replaced by the whole mesh $\Omega_h$.
The proof follows several  steps.

\noindent 
\textsl{Step 1: Local inf-sup stability.}
Let $\zz \in \mathcal{N}_h$; we denote by $\phi_{\zz} \in \Pkc_{1}(\omegazh) \cap H^1_0(\omegaz)$ the Lagrange basis function associated with the node $\zz$.
Then for any $\qq_{\zz} \in [\Pkc_{k-1}(\omegazh)]^3$ 
the function $\BB_{\zz} := h_{\zz}^2 \phi_{\zz} \qq_{\zz} \in [\Pkc_{k}(\omegazh)]^3 \cap \HH^1_0(\omegaz)$ satisfies 
\begin{equation}
\label{eq:int-local-inf-sup}
\normai{\BB_{\zz}}^2_{\zz}  
\lesssim \normai{\qq_{\zz}}^2_{*, \zz}
\qquad \text{and} \qquad
(\BB_{\zz}, \, \qq_{\zz})_{\omegaz} 
\gtrsim \normai{\qq_{\zz}}^2_{*, \zz} \,.
\end{equation}
The proof easily follows from \textbf{(MP1)}.
The following observation will be used only for the special case $k=1$. Let $M \in \widetilde{\Omega}_h$ and let $\zz \in \mathcal{N}_h$ be such that $\omegaz \subseteq M$ (cf. Assumption \textbf{(MA2))}.
Then for any $\qq_{M} \in [\Pk_0(M)]^3$, the function $\BB_{M} =\BB_{\zz}=h_{\zz}^2 \phi_{\zz} \qq_{M} $ in $[\Pkc_{k}(M)]^3 \cap \HH^1_0(M)$
(extending $\BB_{\zz}$ to zero in $M \setminus \omegaz$), satisfies
\begin{equation}
\label{eq:int-local-inf-sup1}
\normai{\BB_{M}}^2_{M} 
\lesssim \normai{\qq_{M}}^2_{*, M}
\qquad \text{and} \qquad
(\BB_{M}, \, \qq_{M})_{M} 
\gtrsim \normai{\qq_{M}}^2_{*, M} \,.
\end{equation}
The proof is a direct consequence of \eqref{eq:int-local-inf-sup} and \textbf{(MP2)}.

\noindent 
\textsl{Step 2: Global inf-sup stability.}
We now prove the following global inf-sup stability: for any $\qq \in [\mathbb{O}_{k-1}(\Omega_h)]^3$ there exists $\BB \in [\Pkc_{k}(\Omega_h)]^3 \cap \HH^1_0(\Omega)$ such that
\begin{equation}
\label{eq:int-global-inf-sup}
\normai{\BB}^2  
\lesssim \normai{\qq}^2_{*}
\qquad \text{and} \qquad
(\BB, \, \qq) 
\gtrsim \normai{\qq}^2_{*} \,.
\end{equation}
For $k=1$, for any $\qq \in [\Pk_0(\widetilde{\Omega}_h)]^3$, we set $\qq_{M} := \qq|_{M}$ and consider the function $\BB_{M} \in [\Pkc_{1}(\Omega_h)]^3 \cap \HH^1_0(\Omega)$ that satisfies \eqref{eq:int-local-inf-sup1} (extended to 0 in $\Omega \setminus M$). 
Then the function $\BB := \sum_{M \in \widetilde{\Omega}_h} \BB_{M} \in [\Pkc_{1}(\Omega_h)]^3 \cap \HH^1_0(\Omega)$ clearly satisfies \eqref{eq:int-global-inf-sup}.

\noindent 
For $k>1$, let $\qq \in [\Pkc_{k-1}(\Omega_h)]^3$.
For any $\zz \in \mathcal{N}_h$ let  $\qq_{\zz} := \qq|_{\omegaz}$ and let $\BB_{\zz} \in [\Pkc_{k}(\Omega_h)]^3 \cap \HH^1_0(\omegaz)$ be the function defined above that satisfies \eqref{eq:int-local-inf-sup} (assumed extended to 0 in $\Omega \setminus \omegaz$). We set $\BB := \sum_{\zz \in \mathcal{N}_h} \BB_{\zz} \in [\Pkc_{k}(\Omega_h)]^3 \cap \HH^1_0(\Omega)$ and we prove that satisfies \eqref{eq:int-global-inf-sup}. Indeed from \eqref{eq:int-local-inf-sup} and \textbf{(MP1)} we infer
\[
\begin{aligned}
&\normai{\BB}^2  = \sum_{E \in \Omega_h} h_E^{-2} \Vert \BB \Vert^2_E
\lesssim \sum_{E \in \Omega_h} \sum_{\zz \text{vertex of $E$}}  h_E^{-2} \Vert \BB_{\zz} \Vert^2_E 
=\sum_{\zz \in \mathcal{N}_h} \sum_{E \in \omegazh} h_E^{-2} \Vert \BB_{\zz} \Vert^2_E 
\\
& = \sum_{\zz \in \mathcal{N}_h} \normai{\BB_{\zz}}^2_{\zz}
\lesssim  
\sum_{\zz \in \mathcal{N}_h} \normai{\qq_{\zz}}^2_{*,\zz}
= 
\sum_{\zz \in \mathcal{N}_h} \sum_{E \in \omegazh} h_E^2 \Vert \qq \Vert^2_E 
\lesssim \sum_{E \in \Omega_h} h_E^2 \Vert \qq \Vert^2_E
= \normai{\qq}_*^2 \,,
%\end{aligned}\]\[\begin{aligned}
\\
&(\BB, \, \qq) = 
\sum_{\zz \in \mathcal{N}_h} (\BB_{\zz}, \, \qq) = 
\sum_{\zz \in \mathcal{N}_h} (\BB_{\zz}, \, \qq_{\zz})_{\omegaz} 
\gtrsim \sum_{\zz \in \mathcal{N}_h} \normai{\qq_{\zz}}^2_{*, \zz}
\\
 & = \sum_{\zz \in \mathcal{N}_h} \sum_{E \in \omegazh} h_E^2 \Vert \qq \Vert^2_E 
\gtrsim \sum_{E \in \Omega_h} h_E^2 \Vert \qq \Vert^2_E = 
\normai{\qq}_*^2 \,.
\end{aligned} 
\]

\noindent 
\textsl{Step 3: A projection operator.}
Let us consider the  projection operator
$\Pi \colon \WWc \to [\Pkc_{k}(\Omega_h)]^3 \cap \HH^1_0(\Omega)$ defined  for any $\HH \in \WWc$ via the following mixed problem (where the auxiliary variable $\pp$ is sought in $[\mathbb{O}_{k-1}(\Omega_h)]^3$)
\begin{equation}
\label{eq:int-proj-op}
\left \{
\begin{aligned}
\inpr{\Pi \HH, \, \BB} + (\pp, \, \BB) &= 0 &\quad 
&\text{for all $\BB \in [\Pkc_{k}(\Omega_h)]^3 \cap \HH^1_0(\Omega)$,}
\\
(\Pi \HH, \, \qq) &= (\HH, \, \qq) &\quad 
&\text{for all $\qq \in [\mathbb{O}_{k-1}(\Omega_h)]^3$.}
\end{aligned}
\right.
\end{equation}
Notice that, in the light of the inf-sup stability \eqref{eq:int-global-inf-sup}, the mixed problem above is well-posed, moreover the following stability estimate holds:
\begin{equation}
\label{eq:int-stab}
\begin{aligned}
\normai{\Pi \HH} & \lesssim 
\sup_{\qq \in [\mathbb{O}_{k-1}(\Omega_h)]^3} 
\frac{(\HH, \, \qq)}{\normai{\qq}_{*}} \le
\sup_{\qq \in [\mathbb{O}_{k-1}(\Omega_h)]^3} 
\frac{\sum_{E \in \Omega_h} h_E^{-1} \Vert \HH\Vert_E \, h_E \Vert \qq\Vert_E}
{\normai{\qq}_{*}}
\\
&\leq 
\sup_{\qq \in [\mathbb{O}_{k-1}(\Omega_h)]^3}  
\frac{\normai{\HH} \normai{\qq}_{*}}
{\normai{\qq}_{*}} = \normai{\HH} \,.
\end{aligned}
\end{equation}

\noindent 
\textsl{Step 4: Interpolation operator, orthogonality property and interpolation error estimates.}
We define the interpolation operator 
$\PWWd \colon \WWc \to \WWd$, 
given for all $\HH \in \WWc$ by
\begin{equation}
\label{eq:PWWd}
\HHi := \HH_{\rm SZ} + \Pi(\HH - \HH_{\rm SZ})
\end{equation}
where  $\HH_{\rm SZ} \in \WWd$ is the Scott-Zhang interpolator of $\HH$ (see \cite{Scott-Zhang}).  
We recall that the Scott-Zhang interpolatation operator satisfies the following estimate
\begin{equation}
\label{eq:scott-zhang}
\sum_{E \in \Omega_h} \Big( h_E^{-2} \Vert \HH - \HH_{\rm SZ}\Vert_E^2 + \Vert \bnabla(\HH - \HH_{\rm SZ})\Vert_E^2 \Big)
\lesssim h^{2s}\vert \HH\vert_{s+1,\Omega_h}^2 \qquad \text{for all $E \in \Omega_h$.}
\end{equation}
The orthogonality condition \eqref{eq:int-orth} easily follows from the second equation in \eqref{eq:int-proj-op}:
\[
\begin{aligned}
(\HH - \HHi, \qq_{k-1}) &= ((I - \Pi)(\HH - \HH_{\rm SZ}),  \qq_{k-1}) = 0
\quad \text{for all $\qq_{k-1} \in [\mathbb{O}_{k-1}(\Omega_h)]^3$.}
\end{aligned}
\]
For what concerns the interpolation error estimates \eqref{eq:err-int} for $\alpha=0,1,2$ we have
\[
\begin{aligned}
\sum_{E \in \Omega_h} &h_E^{-\alpha} \Vert \HH - \HHi \Vert_E^2 
= \sum_{E \in \Omega_h} h_E^{2-\alpha} h_E^{-2} \Vert \HH - \HHi \Vert_E^2 
\lesssim
 h^{2-\alpha} \normai{\HH - \HHi}^2 
\\ 
& 
\begin{aligned}
&\lesssim
h^{2-\alpha} \normai{\HH - \HH_{\rm SZ}}^2 + 
h^{2-\alpha} \normai{\Pi(\HH - \HH_{\rm SZ})}^2
&\quad  &\text{(\eqref{eq:PWWd} \& tri. ineq.)}
\\
& \lesssim 
h^{2-\alpha} \normai{\HH - \HH_{\rm SZ}}^2 = 
h^{2-\alpha} \sum_{E \in \Omega_h} h_E^{-2} \Vert \HH - \HH_{\rm SZ} \Vert_E^2
&\quad  &\text{(bound \eqref{eq:int-stab})}
\\
& \lesssim h^{2s+2-\alpha} \vert \HH \vert_{s+1,\Omega_h}^2 
&\quad  &\text{(bound \eqref{eq:scott-zhang})}
\end{aligned} 
\end{aligned}
\]
and
\[
\begin{aligned}
&\Vert \bnabla(\HH - \HHi) \Vert^2 
\lesssim 
\Vert \bnabla(\HH - \HH_{\rm SZ}) \Vert^2 + 
\sum_{E \in \Omega_h} \Vert \bnabla \Pi(\HH - \HH_{\rm SZ}) \Vert_E^2
%&\,  &\text{(\eqref{eq:PWWd} \& tri. ineq.)}
\\
&\begin{aligned}
& \lesssim 
\Vert \bnabla(\HH - \HH_{\rm SZ}) \Vert^2 + 
\normai{\Pi(\HH - \HH_{\rm SZ})}^2
&\,  &\text{(by Lemma \ref{lm:inverse})}
\\
& \lesssim 
\Vert \bnabla(\HH - \HH_{\rm SZ}) \Vert^2 + 
\normai{\HH - \HH_{\rm SZ}}^2
&\,  &\text{(bound \eqref{eq:int-stab})}
\\
& = 
\Vert \bnabla(\HH - \HH_{\rm SZ}) \Vert^2 +
\sum_{E \in \Omega_h} h_E^{-2} \Vert \HH - \HH_{\rm SZ} \Vert_E^2 
\lesssim 
h^{2s} \vert \HH \vert_{s+1,\Omega_h}^2  
&\,  &\text{(bound \eqref{eq:scott-zhang}).}
\end{aligned}
\end{aligned} 
\]
\end{proof}

\begin{remark}
The local nature of the interpolant ${\cal I}_{\bf W}$ is expressed in \eqref{eq:err-int} by the negative power of $h_E$ in the left hand side. Such bound will allow us to handle certain approximation terms without the need to require an uniform bound on $h/h_E$ (that is, a quasi-uniformity of the mesh). 
\end{remark}

\begin{remark}
Note that the case $k=1$ is somehow different, and needs an additional light assumption on the mesh. The main reason is that, by multiplication with piecewise ${\numberset{P}}_1$ and continuous ``hat'' functions, we can show that $\Pkc_k(\Omega_h)$ is able to satisfy condition \eqref{eq:int-orth} with respect to $\Pkc_{k-1}(\Omega_h)$, with a local norm bound. In the case $k=1$, such result would become useless since $\Pkc_{0}(\Omega_h)$ is the global constant function and carries no asymptotic approximation properties for vanishing $h$. Therefore, in the case $k=1$ we need a slightly modified argument which allows to show orthogonality with respect to piecewise constant functions on a suitable coarser mesh. 
\end{remark}

%------------------------------------------------------------------------------------------------------------
\subsection{Discrete forms}
\label{sub:forms}

In the present section we define the discrete forms at the basis of the proposed stabilized scheme.
We preliminary make the following assumption on the exact velocity solution $\uu$ and on the advective magnetic field $\TT$.

\smallskip
\noindent
\textbf{(RA1) Regularity assumption for the consistency:}

\noindent
Let $\uu \in \ZZc$ be the velocity solution  of Problem \eqref{eq:linear variazionale}, then $\uu \in \VVs := \VVc \cap  \HH^r(\Omega)$ for some $r> 3/2$.
Furthermore we assume that the magnetic advective field $\TT \in \boldsymbol{C}^0(\Omega)$.

\smallskip
\noindent
Recalling that $\cc \in \HH_0(\diver, \Omega)$, we consider the DG counterparts of the continuous forms in \eqref{eq:forme_c1}. Let
$
\ash(\cdot, \cdot) \colon (\VVs \oplus \VVd) \times \VVd \to \R$,
and 
$
c_h(\cdot, \cdot) \colon (\VVs \oplus \VVd) \times \VVd(\Omega_h) \to \R$, 
defined respectively by
\begin{equation}
\label{eq:forme_d}
\begin{aligned}
\ash(\uu,  \vv_h) &:=  (\beps_h(\uu) ,\, \beps_h(\vv_h))
- \sum_{f \in \Edges} (\media{\beps_h(\uu)\nn_f}_f ,\, \jump{\vv_h}_f)_f  +
\\
&- \sum_{f \in \Edges} (\jump{\uu}_f ,\, \media{\beps_h(\vv_h) \nn_f}_f)_f 
+ 
\bdma \sum_{f \in \Edges} h_f^{-1} (\jump{\uu}_f ,\,\jump{\vv_h}_f)_f
\\
c_h(\uu, \vv_h) &:=  (( \bnabla_h \uu ) \, \cc, \, \vv_h )
- \sum_{f \in \EdgesI} ( (\cc \cdot \nn_f) \jump{\uu}_f ,\, \media{\vv_h}_f)_f +
\\
& + 
\bdmc \sum_{f \in \EdgesI} (\vert \cc \cdot \nn_f \vert \jump{\uu}_f, \, \jump{\vv_h}_f )_f
\end{aligned}
\end{equation}
where the penalty parameters $\bdma$ and $\bdmc$ have to be sufficiently large in order to guarantee the coercivity of the form $\ash(\cdot, \cdot)$  and the stability effect in the convection dominated regime due to the upwinding \cite{upwinding, DiPietro, Hdiv3}.

Due to the coupling between fluid-dynamic equation and magnetic equation, in the proposed scheme we also consider an extra stabilizing form (in the spirit of continuous interior penalty \cite{BFH:2006,CIP}) that penalizes the jumps and the gradient jumps along the convective direction $\TT$. 
Let $J_h(\cdot, \cdot) \colon (\VVs \oplus \VVd) \times \VVd(\Omega_h) \to \R$ be the bilinear form defined by
\begin{equation}
\label{eq:J}
\begin{aligned}
J_h(\uu, \vv_h) &:=   \sum_{f \in \EdgesI} \left( \bdmJ 
( \TT \times \jump{\uu}_f , \TT \times \jump{\vv_h}_f )_f +
\bdmJJ h_f^2 
(\jump{\bnabla_h \uu}_f \TT ,\jump{\bnabla_h \vv_h}_f \TT)_f
\right)
\end{aligned}
\end{equation}
where $\bdmJ$ and $\bdmJJ$ are user-dependent (positive) parameters.

Notice that, under the Assumption \textbf{(RA1)}, the discrete forms in \eqref{eq:forme_d} and \eqref{eq:J} satisfy the following consistency property
\begin{equation}
\label{eq:forme_con}
\ash(\uu, \vv_h) = -(\bdiver (\beps (\uu)) ,\, \vv_h )\,,
\,\,\,
c_h(\uu, \vv_h) = c(\uu, \vv_h)\,, 
\,\,\,
J_h(\uu, \vv_h) = 0 \,,
\quad 
\text{$\forall \vv_h \in \VVd$.}
\end{equation}

\begin{remark}
\label{rm:semplificazione}
The following slightly simpler forms 
\begin{equation}
\label{eq:forme_d2}
\begin{aligned}
c_h(\uu, \vv_h) &:=  (( \bnabla_h \uu ) \, \cc, \, \vv_h )
- \sum_{f \in \EdgesI} ( (\cc \cdot \nn_f) \jump{\uu}_f ,\, \media{\vv_h}_f)_f +
\\
& + 
\bdmc \sum_{f \in \EdgesI} \Vert\cc \cdot \nn_f \Vert_{L^\infty(f)} 
( \jump{\uu}_f , \jump{\vv_h}_f )_f
\\
J_h(\uu, \vv_h) &:=  \sum_{f \in \EdgesI} 
\Vert \TT\Vert_{L^\infty(f)}^2 
\left( \bdmJ (\jump{\uu}_f ,  \jump{\vv_h}_f)_f + 
\bdmJJ  h_f^2 (\jump{\bnabla_h \uu}_f  , \jump{\bnabla_h \vv_h}_f )_f
\right)
\end{aligned}
\end{equation}
can be adopted in the place of the forms in \eqref{eq:forme_d} and \eqref{eq:J}, respectively. The simplified forms still satisfy the consistency properties \eqref{eq:forme_con}.
The theoretical derivations of Section \ref{sec:theo} trivially extend also to these forms by changing the semi-norms $\normaupw{\cdot}$ and $\normacip{\cdot}$ in \eqref{eq:norme} accordingly. 
\end{remark}

% \begin{remark}
% \label{rm:parametri}
% The positive parameters $\bdma$, $\bdmc$, $\bdmJ$, $\bdmJJ$ fixed once and for all, are introduced in order to allow some tuning of the different stabilizing terms.
% {\color{red}{Since such uniform parameters do not affect the theoretical derivations, for the time being we set all the parameters equal to $1$.}
% We will be more precise about the practical values of such constants  in the numerical tests section.
% \end{remark}

%%%%%%%%%%%%%%%%%%%%%%%%%%%%%%%%%%%%%%%%%%%%%%

\subsection{Discrete scheme}
\label{sub:scheme}

Referring to the spaces \eqref{eq:spazi_d}, the forms \eqref{eq:forme_c2}, \eqref{eq:forme_d} and \eqref{eq:J}, the stabilized
method for the MHD equation is given by

\begin{equation}
\label{eq:linear fem}
\left\{
\begin{aligned}
& \text{find $(\uu_h, p_h, \BB_h) \in \VVd \times \Qd \times \WWd$,  such that} 
\\
&\begin{aligned}
\pss (\uu_h, \vv_h) + \ns \ash(\uu_h, \vv_h) + c_h(\uu_h, \vv_h) + &
\\ 
-d(\BB_h, \vv_h)   + J_h(\uu_h, \vv_h) + b(\vv_h, p_h) 
&= 
(\ff, \vv_h)  
&\,\,\, & \text{for all $\vv_h \in \VVd$,} 
\\
b(\uu_h, q_h) &= 0 
&\,\,\, & \text{for all $q_h \in \Qd$,}
\\
\psm (\BB_h, \HH_h) + \nm \am(\BB_h, \HH_h) + d(\HH_h, \uu_h) 
&= 
(\GG, \HH_h)  
&\,\,\, & \text{for all $\HH_h \in \WWd$.}
\end{aligned}
\end{aligned}
\right.
\end{equation}
%
%with equivalent kernel formulation
%%
%\begin{equation}
%\label{eq:linear kernel fem}
%\left\{
%\begin{aligned}
%& \text{find $(\uu_h, \BB_h) \in \ZZd \times \WWd$,  such that} 
%\\
%&\begin{aligned}
%\pss (\uu_h, \vv_h) + \ns \ash(\uu_h, \vv_h) + c_h(\uu_h, \vv_h) + &
%\\ 
%-d(\BB_h, \vv_h)  + J_h(\uu_h, \vv_h) 
%&= 
%(\mathcal{P}(\ff), \vv_h)  
%&\,\,\, & \text{for all $\vv_h \in \ZZd$,} 
%\\
%\psm (\BB_h, \HH_h) + \nm \am(\BB_h, \HH_h) + d(\HH_h, \uu_h) 
%&= 
%(\GG, \HH_h)  
%&\,\,\, & \text{for all $\HH_h \in \WWd$.}
%\end{aligned}
%\end{aligned}
%\right.
%\end{equation}

\section{Theoretical analysis}
\label{sec:theo}

\subsection{Stability analysis}
\label{sub:stab}

Let $\ZZs := \ZZc \cap \HH^r(\Omega)$ (cf. Assumption \textbf{(RA1)}), then recalling the definition \eqref{eq:Z_h}, consider the form
$\astab \colon 
(\ZZs \oplus \ZZd) \times \WWc  \times \ZZd \times \WWd \to \R$
defined by
\begin{equation}
\label{eq:astab}
\begin{aligned}
\astab(\uu, \BB, \vv_h, \HH_h) := &
\pss (\uu, \vv_h) + \psm (\BB, \HH_h) + 
\ns \ash(\uu, \vv_h) + \nm \am(\BB, \HH_h) +
\\  
& 
+ c_h(\uu, \vv_h) -d(\BB, \vv_h)   + d(\HH_h, \uu) + J_h(\uu, \vv_h) \,.
\end{aligned}
\end{equation}
Then Problem \eqref{eq:linear fem} can be formulated as follows
\begin{equation}
\label{eq:linear stab fem}
\left\{
\begin{aligned}
& \text{find $(\uu_h, \BB_h) \in \ZZd \times \WWd$,  such that} 
\\
&\begin{aligned}
\astab(\uu_h, \BB_h, \vv_h, \HH_h) 
&= 
(\ff, \vv_h)   + (\GG, \HH_h)  
&\,\,\, & \text{for all $\vv_h \in \ZZd$, $\HH_h \in \WWd$.}
\end{aligned}
\end{aligned}
\right.
\end{equation}
Notice that under Assumption \textbf{(RA1)}, employing \eqref{eq:forme_con}, the form $\astab(\cdot, \cdot, \cdot, \cdot)$ is consistent, i.e. the solution $(\uu, \BB) \in \ZZs \times \WWc$ of Problem \eqref{eq:linear variazionale} realizes
\begin{equation}
\label{eq:linear stab}
\astab(\uu, \BB, \vv_h, \HH_h) = 
(\ff, \vv_h)   + (\GG, \HH_h)  
\qquad \text{for all $\vv_h \in \ZZd$, $\HH_h \in \WWd$.}
\end{equation}
We define the following norm and semi-norm  on $\VVs \oplus \VVd$
\begin{equation}
\label{eq:norme}
\begin{aligned}
\normas{\uu}^2 &:= 
\pss \Vert \uu \Vert^2 + \ns \Vert \beps_h(\uu) \Vert^2 + 
\ns \bdma\sum_{f \in \Edges} h_f^{-1} \Vert \jump{\uu}_f \Vert_{f}^2
\\
\normaupw{\uu}^2 &:= \bdmc\sum_{f \in \EdgesI}  \Vert \vert \cc \cdot \nn_f \vert^{1/2} \jump{\uu}_f \Vert_{f}^2
\\
\normacip{\uu}^2 &:=
\bdmJ\sum_{f \in \EdgesI}  \Vert \TT \times \jump{\uu}_f \Vert_{f}^2
+
\bdmJJ\sum_{f \in \EdgesI}  h_f^2 \Vert \jump{\bnabla_h \uu}_f \TT\Vert_{f}^2
\end{aligned}
\end{equation}
and the energy norms on $\VVs \oplus \VVd$  and $\WWc$ respectively
\begin{equation}
\label{eq:normeVW}
\normastab{\uu}^2 := 
\normas{\uu}^2 + \normaupw{\uu}^2 + \normacip{\uu}^2 \,,
\qquad 
\normam{\BB}^2 := 
\psm \Vert \BB \Vert^2 + \nm \Vert \bnabla \BB \Vert^2 \,.
\end{equation}
Finally, we define the following mesh-dependent norm on $\HH^1(\Omega_h)$
\begin{equation}
\label{eq:norma1h}
\Vert \uu \Vert_{1,h}^2 := 
\Vert \uu \Vert^2 + \Vert \beps_h(\uu) \Vert^2 + 
\bdma\sum_{f \in \Edges}  h_f^{-1} \Vert \jump{\uu}_f \Vert_{f}^2 \,.
\end{equation}
Then from \cite{korn} we recall the following.

\begin{lemma}[Discrete Korn inequality]
\label{lm:korn}
Under the mesh assumption \cfan{(MA1)}, for any $\vv \in \HH^1(\Omega_h)$ it holds 
\[
\Vert \bnabla_h \vv \Vert^2 \lesssim 
\Vert \vv \Vert_{1,h}^2 \,.
\]
\end{lemma}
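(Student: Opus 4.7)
The plan is to follow the strategy of Brenner (for piecewise $\boldsymbol{H}^1$ Korn inequalities) in which we split $\vv$ into a conforming part plus a jump-controlled remainder, and then invoke the classical continuous Korn inequality.

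First, I would construct an Oswald-type averaging operator $\mathcal{A}\colon \HH^1(\Omega_h)\to \HH^1(\Omega)$ mapping into a conforming Lagrange finite element space on $\Omega_h$, by assigning to each Lagrange node the arithmetic mean of the traces of $\vv$ from the neighbouring elements. Under assumption \textbf{(MA1)} this operator is known to satisfy the local stability/approximation estimate
\begin{equation*}
\sum_{E \in \Omega_h} h_E^{-2}\|\vv - \mathcal{A}\vv\|_E^2 + \|\bnabla_h (\vv - \mathcal{A}\vv)\|^2 \lesssim \sum_{f \in \Edges} h_f^{-1}\|\jump{\vv}_f\|_f^2,
\end{equation*}
where the homogeneous boundary of $\partial\Omega$ is treated by using the convention $\jump{\vv}_f = \vv$ on $f \in \EdgesB$ in the definition of the broken norm (this is precisely how jumps are defined in the excerpt).

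Second, I would apply the classical Korn inequality on the conforming function $\mathcal{A}\vv \in \HH^1(\Omega)$, giving
\begin{equation*}
\|\bnabla (\mathcal{A}\vv)\|^2 \lesssim \|\mathcal{A}\vv\|^2 + \|\beps(\mathcal{A}\vv)\|^2.
\end{equation*}
The right-hand side is controlled by $\|\vv\|^2 + \|\beps_h \vv\|^2$ plus terms involving $\vv-\mathcal{A}\vv$. For the symmetric gradient term I would write $\beps(\mathcal{A}\vv) = \beps_h\vv - \beps_h(\vv-\mathcal{A}\vv)$ and bound the last summand by $\|\bnabla_h(\vv-\mathcal{A}\vv)\|$, which is already under control by the averaging estimate above. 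Similarly, $\|\mathcal{A}\vv\|\le \|\vv\|+\|\vv-\mathcal{A}\vv\|$ and the second term is absorbed using the $L^2$ part of the averaging bound.

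Finally, assembling via the triangle inequality,
\begin{equation*}
\|\bnabla_h \vv\|^2 \lesssim \|\bnabla_h(\vv-\mathcal{A}\vv)\|^2 + \|\bnabla (\mathcal{A}\vv)\|^2 \lesssim \|\vv\|^2 + \|\beps_h \vv\|^2 + \sum_{f \in \Edges} h_f^{-1}\|\jump{\vv}_f\|_f^2,
\end{equation*}
which is exactly $\|\vv\|_{1,h}^2$ up to the constant $\bdma$ appearing in the definition. The only obstacle is the construction and analysis of $\mathcal{A}$, but this is standard material (precisely what the cited reference provides), and no quasi-uniformity is needed since the Oswald bound is entirely local.
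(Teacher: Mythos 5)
The paper does not actually prove this lemma: it quotes it from the reference \cite{korn} (Brenner's Korn inequalities for piecewise $H^1$ vector fields), whose argument is different from yours. There, Korn's inequality is applied element by element up to a rigid body motion, and the resulting piecewise rigid motion field is then controlled through the $L^2$ projections of the face jumps onto a finite-dimensional trace space; that route is designed to work for arbitrary piecewise $\HH^1$ fields, which is the generality in which the lemma is stated.

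Your route has a genuine gap at that level of generality, concentrated in its first step. First, the Oswald operator you describe assigns to each Lagrange node the arithmetic mean of the traces of $\vv$; for $\vv \in \HH^1(\Omega_h)$ in three dimensions the traces lie only in $L^2$ of the faces and have no well-defined point values, so the operator itself is not defined outside of piecewise polynomial (or at least piecewise continuous) fields. Second, and more fundamentally, the estimate you rely on,
\[
\sum_{E \in \Omega_h} h_E^{-2}\|\vv - \mathcal{A}\vv\|_E^2 + \|\bnabla_h (\vv - \mathcal{A}\vv)\|^2 \lesssim \sum_{f \in \Edges} h_f^{-1}\|\jump{\vv}_f\|_f^2\,,
\]
cannot hold for general $\vv \in \HH^1(\Omega_h)$ when $\mathcal{A}$ maps into a finite element space: take any smooth non-polynomial $\vv \in \Hunozero$; then every jump (including the boundary ones, by the convention $\jump{\vv}_f = \vv$ on $f\in\EdgesB$) vanishes, so the right-hand side is zero, while the left-hand side is a nonzero interpolation error. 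The Oswald bound with only jumps on the right is a statement about piecewise polynomials (it is precisely Lemma 3.2 of the CIP reference used elsewhere in this paper), not about broken $H^1$ functions. Your argument therefore establishes the inequality only for $\vv$ in a fixed piecewise polynomial space, which happens to cover the paper's main use of the lemma in Lemma \ref{lm:norma1h}, but not the statement as written. To close the gap for all of $\HH^1(\Omega_h)$ you would either need a conforming companion operator that reproduces conforming functions exactly (nontrivial for broken $H^1$), or Brenner's rigid-body-motion argument. The remaining steps of your proof (continuous Korn inequality for $\mathcal{A}\vv$, triangle inequality, absorption of the correction terms) are sound.
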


The following result are instrumental to prove the well-posedness of problem \eqref{eq:linear stab fem}.

\begin{proposition}[coercivity of $\astab$]
\label{prp:coe}
Under the mesh assumption \cfan{(MA1)} and the consistency assumption \cfan{(RA1)},
let $\astab(\cdot, \cdot, \cdot, \cdot)$ be the form \eqref{eq:astab}.
If the parameter $\bdma$ in \eqref{eq:forme_d} is sufficiently large there exists a real positive constant $\ccoe$ such that for all $\vv_h \in \ZZd$ and $\HH_h \in \WWc$ the following holds
\[
\astab(\vv_h, \HH_h, \vv_h, \HH_h) \geq 
\ccoe \left( \normastab{\vv_h}^2 + \normam{\HH_h}^2 \right) \,.
\]
\end{proposition}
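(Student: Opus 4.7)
The plan is to test $\astab$ with $(\vv_h,\HH_h)$ in both slots and exploit the skew-symmetric structure of the fluido-magnetic coupling together with a standard DG coercivity argument for $\ash$ and the classical convex-domain embedding \eqref{eq:well3} for $\am$. The first key observation is that plugging $\uu = \vv_h$ and $\BB = \HH_h$ into the definition \eqref{eq:astab} makes the coupling terms cancel:
\begin{equation*}
-d(\HH_h,\vv_h) + d(\HH_h,\vv_h) = 0,
\end{equation*}
so that
\begin{equation*}
\astab(\vv_h,\HH_h,\vv_h,\HH_h) = \pss\|\vv_h\|^2 + \psm\|\HH_h\|^2 + \ns\,\ash(\vv_h,\vv_h) + \nm\,\am(\HH_h,\HH_h) + c_h(\vv_h,\vv_h) + J_h(\vv_h,\vv_h).
\end{equation*}
It then suffices to bound each of the surviving diagonal blocks from below by the corresponding piece of $\normastab{\vv_h}^2+\normam{\HH_h}^2$.

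Next I would treat each block separately. For $\ash$, I invoke the classical DG coercivity result: writing the two consistency terms as $-2\sum_f(\media{\beps_h(\vv_h)\nn_f},\jump{\vv_h})_f$, applying Cauchy–Schwarz, Young's inequality, the trace estimate of Lemma~\ref{lm:trace} and the inverse estimate of Lemma~\ref{lm:inverse}, one absorbs them into $\|\beps_h(\vv_h)\|^2$ and $\bdma\sum_f h_f^{-1}\|\jump{\vv_h}_f\|_f^2$, provided $\bdma$ is large enough (this is the quantitative condition on the penalty parameter in the statement). For $c_h(\vv_h,\vv_h)$, element-wise integration by parts combined with $\diver\cc=0$ and $\cc\cdot\nn=0$ on $\partial\Omega$ yields, via the identity $\jump{\vv_h\cdot\vv_h}=2\media{\vv_h}\cdot\jump{\vv_h}$,
\begin{equation*}
((\bnabla_h\vv_h)\cc,\vv_h) = \sum_{f\in\EdgesI}\bigl((\cc\cdot\nn_f)\jump{\vv_h},\media{\vv_h}\bigr)_f,
\end{equation*}
so the first two terms in $c_h(\vv_h,\vv_h)$ cancel exactly, leaving precisely $\normaupw{\vv_h}^2$. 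Clearly $J_h(\vv_h,\vv_h)=\normacip{\vv_h}^2$ by definition. Finally, since $\HH_h\in\WWc$ and $\Omega$ is a convex polyhedron, the embedding \eqref{eq:well3} yields
\begin{equation*}
\am(\HH_h,\HH_h) = \|\bcurl\HH_h\|^2+\|\diver\HH_h\|^2 \gtrsim \|\HH_h\|_{\WWc}^2 \gtrsim \|\bnabla\HH_h\|^2,
\end{equation*}
so that $\psm\|\HH_h\|^2+\nm\,\am(\HH_h,\HH_h) \gtrsim \normam{\HH_h}^2$.

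Assembling the four lower bounds yields a constant $\ccoe>0$ such that
\begin{equation*}
\astab(\vv_h,\HH_h,\vv_h,\HH_h)\geq \ccoe\bigl(\normastab{\vv_h}^2+\normam{\HH_h}^2\bigr),
\end{equation*}
as desired.

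I expect the main technical obstacle to be the DG coercivity of $\ash$: one has to quantify how large $\bdma$ must be, which requires carefully combining the trace inequality (Lemma~\ref{lm:trace}) with the inverse inequality (Lemma~\ref{lm:inverse}) on $\media{\beps_h(\vv_h)\nn_f}$, with constants depending on the shape-regularity constant $c_{\rm M}$. A secondary subtlety is making the integration-by-parts argument for $c_h$ rigorous in the DG setting: this uses that $\vv_h\in\VVd\subset\HH_0(\diver)$, so that the boundary face contributions vanish thanks to $\cc\cdot\nn=0$ on $\partial\Omega$, and that no extra volume term appears because $\diver\cc=0$.
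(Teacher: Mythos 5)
Your proposal is correct and follows essentially the same route as the paper: the coupling terms $-d(\HH_h,\vv_h)+d(\HH_h,\vv_h)$ cancel, $\ash$ is handled by the standard DG penalty argument for $\bdma$ large enough, $c_h(\vv_h,\vv_h)=\normaupw{\vv_h}^2$ by skew-symmetry of the advective part, $J_h(\vv_h,\vv_h)=\normacip{\vv_h}^2$ by definition, and the convex-domain embedding \eqref{eq:well3} controls $\Vert\bnabla\HH_h\Vert^2$. The only difference is that you unpack the DG coercivity and the integration-by-parts identity for $c_h$, which the paper simply cites from \cite[Lemma 4.12, Lemma 2.18]{DiPietro}.
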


\begin{proof}
\begin{proof}
We briefly sketch the simple proof. Recalling definitions \eqref{eq:norme} and \eqref{eq:normeVW}, direct computations yield
\begin{equation}
\label{eq:coe1}
\begin{aligned}
\astab(\vv_h, \HH_h, \vv_h, \HH_h) = &
\pss \Vert \vv_h \Vert^2 +  \ns \ash(\vv_h, \vv_h) 
+ c_h(\vv_h, \vv_h) + \normacip{\vv_h}^2 +\\
& +
\psm \Vert \HH_h \Vert^2 + \nm \Vert \bcurl (\HH_h) \Vert^2 
+ \nm \Vert \diver \HH_h \Vert^2 \,.
\end{aligned}
\end{equation}
From \cite[Lemma 4.12, Lemma 2.18]{DiPietro} we infer
\begin{equation}
\label{eq:coe2}
\begin{aligned}
\pss \Vert \vv_h \Vert^2 +  \ns \ash(\vv_h, \vv_h)   &\gtrsim 
\normas{\vv_h}^2 \,,
\qquad
c_h(\vv_h, \vv_h) = \normaupw{\vv_h}^2 \,.
\end{aligned}
\end{equation}
Furthermore being $\Omega$ a convex polyhedron from \cite[Theorem 3.9]{Girault-book} and \cite{Amrouche} we infer
\begin{equation}
\label{eq:coe3}
\Vert \bcurl(\HH_h)\Vert^2 +  \Vert \diver \HH_h \Vert^2 
\gtrsim  \Vert \bnabla \HH_h \Vert^2 \,.
\end{equation} 
The thesis follows inserting \eqref{eq:coe2} and \eqref{eq:coe3} in \eqref{eq:coe1}.
\end{proof}
\end{proof}

\subsection{Error analysis}
\label{sub:err}

Let $(\uu, \BB) \in \ZZs \times \WWc$ and $(\uu_h, \BB_h) \in \ZZd \times \WWd$ be the solutions of Problem \eqref{eq:linear variazionale} and Problem \eqref{eq:linear stab fem}, respectively.
Then referring to Lemma \ref{lm:int-v} and Lemma \ref{lm:int-i}, let us define the following error functions
\begin{equation}
\label{eq:fquantities}
\eei := \uu - \uui\,, \qquad 
\eeh := \uu_h - \uui\,, \qquad 
\EEi := \BB - \BBi\,, \qquad 
\EEh := \BB_h - \BBi \,.
\end{equation}
Notice  that from Lemma \ref{lm:int-v} $\uui \in \ZZd$, then $\eeh \in \ZZd$.
We also introduce the following useful quantities for the error analysis
\begin{equation}
\label{eq:quantities}
\begin{aligned}
\ls^2 &:= \max \biggl\{ 
\pss h^2 \,,  \max_{f \in \EdgesI} \Vert  \cc \cdot \nn_f \Vert_{L^\infty(f)} h \,,
 \max_{f \in \EdgesI} \Vert  \TT \Vert_{L^{\infty}(f)}^2  h \,,
\ns(1 + \bdma + \bdma^{-1}) 
\biggr\} \,,
\\
\lm^2 &:= \max \{ \psm h^2 \,, \nm \} \,.
\end{aligned}
\end{equation}
Notice that the quantities appearing in $\ls$ and $\lm$ are those typically used to determine the global regime of the MHD equation i.e. diffusion, convective or reaction dominated case.

We now state the final regularity assumptions required for the theoretical analysis.

\smallskip
\noindent
\textbf{(RA2) Regularity assumptions on the exact solution (error analysis):} 

\noindent
Assume that:
(i) the advective velocity field $\cc \in \boldsymbol{W}^1_ \infty(\Omega)$,
(ii) the advective magnetic induction $\TT \in \boldsymbol{W}^1_ \infty(\Omega)$,
(iii) the exact solution $(\uu, p, \BB)$ of Problem \eqref{eq:linear variazionale} belongs to $(\HH^{k+1}(\Omega_h) \cap \HH^r(\Omega)) \times H^{k}(\Omega_h) \times \HH^{k+1}(\Omega_h)$ for some $r>3/2$.

\begin{proposition}[Interpolation error estimate in the energy norms]
Let Assumption \cfan{(MA1)} hold.
Furthermore, if $k=1$ let also Assumption \cfan{(MA2)} hold. Then, under the regularity assumption  \cfan{(RA2)}, referring to \eqref{eq:fquantities} and \eqref{eq:quantities} the following hold:
\label{prp:interpolation}
\[
\normastab{\eei}^2 \lesssim  
\ls^2 \, h^{2k} \vert \uu \vert_{k+1, \Omega_h}^2 \,,
\qquad
 \normam{\EEi}^2 \lesssim 
\lm^2 \, h^{2k} \vert \BB \vert_{k+1, \Omega_h}^2 \,.
\]
\end{proposition}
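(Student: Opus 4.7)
The plan is to bound each term of the composite energy norms separately, combining the classical BDM interpolation estimates from Lemma \ref{lm:int-v} (together with its jump corollary in Remark \ref{rm:jumpmediaeei}) on the fluid side, and the local interpolation estimates of Lemma \ref{lm:int-i} on the magnetic side. Nothing deep is needed: the main content is essentially a careful bookkeeping that tracks how each coefficient ($\pss, \ns, \bdma, \bdmc, \bdmJ, \bdmJJ$, etc.) multiplies an appropriate power of $h$ and matches one of the four terms defining $\ls^2$ or $\lm^2$.

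I would start with the magnetic bound, which is immediate. From Lemma \ref{lm:int-i} applied with $\alpha=0$ we get $\Vert \EEi\Vert^2 \lesssim h^{2k+2}\vert \BB\vert_{k+1,\Omega_h}^2$, while the second estimate in \eqref{eq:err-int} gives $\Vert \bnabla \EEi\Vert^2 \lesssim h^{2k}\vert \BB\vert_{k+1,\Omega_h}^2$. Adding these with the weights $\psm$ and $\nm$ respectively, and recalling $\lm^2 = \max\{\psm h^2, \nm\}$, yields the bound on $\normam{\EEi}^2$.

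For the fluid side, I would split $\normastab{\eei}^2$ into its three constituent pieces. The $\normas{\eei}^2$ part is handled with Lemma \ref{lm:int-v}(iii) for the volumetric $L^2$ and symmetric gradient terms, and with Remark \ref{rm:jumpmediaeei} at $\alpha=-1$ for the weighted jump term, producing the contribution $(\pss h^2 + \ns + \ns\bdma)h^{2k}\vert\uu\vert_{k+1,\Omega_h}^2$. For $\normaupw{\eei}^2$ I would pull out $\max_f\Vert\cc\cdot\nn_f\Vert_{L^\infty(f)}$ and apply Remark \ref{rm:jumpmediaeei} at $\alpha=0$ on $\sum_f \Vert\jump{\eei}_f\Vert_f^2$, obtaining $\bdmc \max_f\Vert\cc\cdot\nn_f\Vert_{L^\infty(f)} h\cdot h^{2k}\vert\uu\vert_{k+1,\Omega_h}^2$. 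For $\normacip{\eei}^2$ I would similarly pull out $\max_f\Vert\TT\Vert_{L^\infty(f)}^2$, bound the $\TT\times\jump{\eei}_f$ term via $\alpha=0$ and the $h_f^2$-weighted gradient jump via the second part of Remark \ref{rm:jumpmediaeei} at $\alpha=0$ (which gives $\sum_f h_f^2\Vert\jump{\bnabla_h\eei}_f\Vert_f^2 \lesssim h^{2k+1}\vert\uu\vert_{k+1,\Omega_h}^2$), producing $(\bdmJ+\bdmJJ)\max_f\Vert\TT\Vert_{L^\infty(f)}^2 h\cdot h^{2k}\vert\uu\vert_{k+1,\Omega_h}^2$.

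Summing the three contributions and comparing with $\ls^2 = \max\{\pss h^2,\ \max_f\Vert\cc\cdot\nn_f\Vert_{L^\infty(f)}h,\ \max_f\Vert\TT\Vert_{L^\infty(f)}^2 h,\ \ns(1+\bdma+\bdma^{-1})\}$, every term is absorbed (note that $\bdmc, \bdmJ, \bdmJJ$ are fixed user parameters hidden in $\lesssim$). There is no real obstacle here; the only thing to be slightly careful about is using the correct value of $\alpha$ in Remark \ref{rm:jumpmediaeei} for each jump term so that the resulting power of $h$ matches the scaling encoded in $\ls^2$ (in particular, the factor $h$ multiplying the advective maxima comes precisely from choosing $\alpha=0$ rather than $\alpha=-1$).
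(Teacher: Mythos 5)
Your proposal is correct and follows essentially the same route as the paper: the paper's proof likewise reduces the claim to the four bounds \eqref{eq:eiS}--\eqref{eq:EiM}, obtained by term-by-term application of \eqref{eq:int-v}, \eqref{eq:utilejump2} (with the same choices of $\alpha$ you make) and \eqref{eq:err-int}, and then absorbs the coefficient combinations into $\ls^2$ and $\lm^2$. No gaps.
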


\begin{proof}
The proof is a direct consequence of bounds \eqref{eq:int-v}, \eqref{eq:utilejump2} and \eqref{eq:err-int}:
\begin{align}
\label{eq:eiS}
\normas{\eei}^2 
& \lesssim
(\pss h^2 + \ns(1 + \bdma)) h^{2k} \vert \uu \vert_{k+1, \Omega_h}^2\,,
\\
\label{eq:eiupw}
\normaupw{\eei}^2 
& \lesssim
\bdmc \max_{f \in \EdgesI} \Vert \cc \cdot \nn_f \Vert_{L^\infty(f)}
h^{2k+1} \vert \uu\vert_{k+1, \Omega_h}^2 \,,
\\
\label{eq:eicip}
\normacip{\eei}^2 
& \lesssim
(\bdmJ + \bdmJJ) \max_{f \in \EdgesI} \Vert \TT \Vert_{L^\infty(f)}^2 
h^{2k+1}\vert \uu \vert_{k+1, \Omega_h}^2 \,,
\\
\label{eq:EiM}
\normam{\EEi}^2 & \lesssim
(\psm h^2 + \nm) h^{2k} \vert \BB \vert_{k+1, \Omega_h}^2 \,.
\end{align}
\end{proof}

In order to prove the convergence result in Proposition \ref{prp:error equation}, we need the following technical lemmas.

\begin{lemma}
\label{lm:norma1h}
Let Assumption \cfan{(MA1)} hold. Then for any $\vv_h \in \VVd$ 
\[
\normas{\vv_h}^2 \lesssim \ts^2 \Vert \vv_h\Vert_{1,h}^2 \,,
\qquad 
\Vert \bnabla_h \vv_h\Vert^2 \lesssim \Vert \vv_h\Vert_{1,h}^2 
\lesssim \ps^2 \normas{\vv_h}^2 \,,
\]
where
\begin{equation}
\label{eq:psi}
\ts^2 := \max\{\pss, \ns\} \,,
\qquad
\ps^2 := \max\{\pss^{-1},  \ns^{-1} \} \,.
\end{equation}
\end{lemma}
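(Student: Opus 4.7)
The statement is essentially a change of variables between two weighted norms on $\VVd$ (interleaved with a discrete Korn step), so the plan is simply to track each summand separately and bound it by the matching summand of the target norm, picking up the factors $\ts^2$ or $\ps^2$ along the way.

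For the first inequality I would write
\[
\normas{\vv_h}^2 = \pss\Vert\vv_h\Vert^2 + \ns\Vert\beps_h(\vv_h)\Vert^2 + \ns\bdma\sum_{f\in\Edges}h_f^{-1}\Vert\jump{\vv_h}_f\Vert_f^2,
\]
factor out $\max\{\pss,\ns\}=\ts^2$ from the three summands, and recognize the remaining bracket as $\Vert\vv_h\Vert_{1,h}^2$ (see \eqref{eq:norma1h}). For the middle bound $\Vert\bnabla_h\vv_h\Vert^2\lesssim\Vert\vv_h\Vert_{1,h}^2$ I would just invoke the discrete Korn inequality of Lemma \ref{lm:korn} applied to $\vv_h\in\VVd\subset\HH^1(\Omega_h)$.

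For the last inequality I would split $\Vert\vv_h\Vert_{1,h}^2$ into its three summands and, for each one, multiply and divide by the appropriate coefficient $\pss$ or $\ns$:
\[
\Vert\vv_h\Vert^2 = \pss^{-1}\,\pss\Vert\vv_h\Vert^2 \leq \pss^{-1}\normas{\vv_h}^2,
\]
and analogously
\[
\Vert\beps_h(\vv_h)\Vert^2 + \bdma\sum_{f\in\Edges}h_f^{-1}\Vert\jump{\vv_h}_f\Vert_f^2 \leq \ns^{-1}\normas{\vv_h}^2.
\]
Summing and using $\pss^{-1}+2\ns^{-1}\lesssim\max\{\pss^{-1},\ns^{-1}\}=\ps^2$ concludes.

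There is no real obstacle here: the lemma is purely algebraic and the only nontrivial ingredient is the discrete Korn inequality, which is already imported from \cite{korn} as Lemma \ref{lm:korn}. The point of the statement is rather bookkeeping: it quantifies how much $\pss$ and $\ns$ must be ``paid'' to pass between the physically scaled norm $\normas{\cdot}$ used in the stability/error analysis and the unweighted mesh norm $\Vert\cdot\Vert_{1,h}$ needed when applying Korn or inverse estimates later on.
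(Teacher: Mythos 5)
Your proof is correct and follows exactly the route the paper takes: the paper's own proof is a one-line remark that the bounds follow from the definitions \eqref{eq:norme}, \eqref{eq:norma1h} and the discrete Korn inequality of Lemma \ref{lm:korn}, which is precisely the term-by-term bookkeeping you carry out. Nothing is missing; you have simply made the paper's sketch explicit.
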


\begin{proof}
The bounds easily follow from definitions \eqref{eq:norme} and \eqref{eq:norma1h} and from the discrete Korn inequality in Lemma \ref{lm:korn}.
\end{proof}

\begin{remark}
\label{rm:utile-min}
Notice that from Lemma \ref{lm:inverse} and Lemma \ref{lm:norma1h}, for any $\vv_h \in \VVd$ we infer
%\[
%\begin{aligned}
%\sum_{E \in \Omega_h} h_E^2 \Vert \bnabla \vv_h \Vert^2_E
%&\lesssim 
%\sum_{E \in \Omega_h} \Vert \vv \Vert^2_E 
%\lesssim
%\pss^{-1} \normas{\vv_h}^2 \,,
%\\
%\sum_{E \in \Omega_h} h_E^2 \Vert \bnabla \vv_h \Vert^2_E
%&\lesssim 
%h^2 \Vert \bnabla_h \vv_h \Vert^2
%\lesssim
%h^2 \ps^2 \normas{\vv_h}^2 \,,
%\end{aligned}
%\]
%therefore
\begin{equation}
\label{eq:utile-min}
\sum_{E \in \Omega_h} h_E^2 \Vert \bnabla \vv_h \Vert^2_E
\lesssim \min \{\pss^{-1}, \ps^2  h^2 \} \normas{\vv_h}^2 \,.
\end{equation}
\end{remark}

\begin{lemma}
\label{lm:cip}
Let Assumption \cfan{(MA1)} hold. Furthermore, if $k=1$ let also Assumption \cfan{(MA2)} hold. Then, referring to \eqref{eq:mathbbO}, there exists a projection operator $\intcip \colon \Pk_{k-1}(\Omega_h) \to \mathbb{O}_{k-1}(\Omega_h)$
such that for any $p_{k-1} \in \Pk_{k-1}(\Omega_h)$   the following holds:
\[
\sum_{E \in \Omega_h}h_E  \Vert (I- \intcip) p_{k-1} \Vert_E^2 \lesssim 
\sum_{f \in \EdgesI}
h_f^2 \Vert \jump{p_{k-1}}_f\Vert_f^2
\, .
\]
\end{lemma}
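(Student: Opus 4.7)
The plan is to construct $\intcip$ explicitly in the two cases $k>1$ and $k=1$, and in both cases reduce the error estimate to a ``broken $L^2$ vs.\ jumps'' comparison that is local up to the macroelement level.

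\emph{Case $k>1$.} Here $\mathbb{O}_{k-1}(\Omega_h)=\Pkc_{k-1}(\Omega_h)$, so I would use an Oswald-type averaging projector. Pick a set of Lagrange nodes for $\Pkc_{k-1}(\Omega_h)$ and, for each internal node $\xx$, set
$(\intcip p_{k-1})(\xx)$ to be any convex combination of the values $p_{k-1}|_E(\xx)$ taken over the elements $E$ containing $\xx$; at boundary nodes pick one side. This defines a linear operator into $\Pkc_{k-1}(\Omega_h)$ which is the identity on $\Pkc_{k-1}(\Omega_h)$, hence a projection. On a single element $E$ the scaling argument (equivalence of nodal and $L^2$ norms on the reference element together with \cfan{(MP1)}) gives
\[
\|(I-\intcip) p_{k-1}\|_E^2 \lesssim h_E^{\,d} \sum_{\xx \text{ node of } E} |p_{k-1}|_E(\xx) - (\intcip p_{k-1})(\xx)|^2,
\]
and each nodal discrepancy is controlled by the $L^\infty$ norm of $\jump{p_{k-1}}_f$ on a face $f$ of the star of $\xx$, which by inverse estimates on $f$ and \cfan{(MP1)} is bounded by $h_f^{-(d-1)/2}\|\jump{p_{k-1}}_f\|_f$. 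Multiplying by $h_E$ and summing over $E$ (using local quasi-uniformity to write $h_E\sim h_f$) yields the required estimate.

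\emph{Case $k=1$.} Now $\mathbb{O}_{0}(\Omega_h)=\Pk_0(\widetilde\Omega_h)$. On each macroelement $M\in\widetilde\Omega_h$ I would take $\intcip p_0$ to be the $L^2$-average of $p_0$ on $M$, which is trivially a projection. Since $p_0$ is piecewise constant, for each $E\subseteq M$ the difference $p_0|_E - (\intcip p_0)|_M$ is a constant that can be written as a combination of a bounded number of jumps: by \cfan{(MA2)}(i) the set $\omegaEh$ is connected and of bounded cardinality, so through a chain $E=E_0,E_1,\ldots,E_n$ of adjacent elements inside $M$ one has $|c_{E}-c_{M}|\lesssim \sum_i |c_{E_i}-c_{E_{i+1}}|$, where each $|c_{E_i}-c_{E_{i+1}}|$ equals the absolute value of the (constant) jump across the separating face $f_i$. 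Combining $\|p_0-\intcip p_0\|_E^2\sim h_E^{\,d}|c_E-c_M|^2$ with $\|\jump{p_0}_{f_i}\|_{f_i}^2\sim h_{f_i}^{\,d-1}|c_{E_i}-c_{E_{i+1}}|^2$ and using \cfan{(MP2)} to equate all the scales $h_E,h_{f_i},h_M$ inside $M$, one obtains
\[
\sum_{E\subseteq M} h_E\,\|p_0-\intcip p_0\|_E^2 \lesssim \sum_{f\subset \overline M,\,f\in \EdgesI} h_f^2 \|\jump{p_0}_f\|_f^2.
\]
Summing over $M\in\widetilde\Omega_h$ and observing that every internal face of the fine mesh appears at most a bounded number of times on the right (each face is interior to at most one macroelement or lies on its boundary) gives the stated bound.

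\emph{Main obstacle.} The conceptually routine case is $k>1$, where standard Oswald interpolation literature applies directly; the real work is the low-order case $k=1$, in which one cannot use nodal averaging because $\Pkc_0$ carries no approximation at all. The macroelement construction is what makes Assumption \cfan{(MA2)} indispensable, and the delicate point is to verify that the per-element deviations can be reorganized, via paths inside each macroelement, into a sum over faces that is indeed a subset of $\EdgesI$ with multiplicity independent of $h$; this is exactly what \cfan{(MP2)} together with the bounded cardinality of $\omegaEh$ provides.
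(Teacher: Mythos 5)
Your proposal is correct, and for $k>1$ it coincides with the paper's choice (the Oswald averaging operator), the only difference being that you spell out the standard nodal-averaging estimate where the paper simply invokes Lemma 3.2 of the CIP reference. For $k=1$ you pick the same operator as the paper (the $L^2$-projection onto $\Pk_0(\widetilde{\Omega}_h)$, i.e.\ the mean value on each macroelement $M$), but you prove the estimate by a genuinely different route: a telescoping argument along face-paths inside $M$, using that for piecewise constants the jump across a shared face is literally the difference of the two element values, together with the bounded cardinality of $\omegaEh$ and the scale equivalences of \textbf{(MP2)}. The paper instead observes that $(I-\intcip)p_0$ has zero mean on $M$, invokes the inf-sup stability of the local $\boldsymbol{P}_2$--$P_0$ Stokes pair on $M$, and integrates by parts so that pairing with $\diver\vv_h$ produces exactly the interior face jumps. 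Your argument is more elementary and self-contained; the paper's is more structural and reuses the same duality machinery employed elsewhere (e.g.\ in the construction of $\PWWd$), and it does not require tracking explicit chains of elements. The one point you should make explicit is that your path argument needs the agglomerates to be \emph{face}-connected (adjacency through shared faces, not merely through edges or vertices), since otherwise a separating face $f_i$ with $|f_i|\sim h_{f_i}^{d-1}$ need not exist along the chain; this is guaranteed by the star-based construction of $\widetilde\Omega_h$ described in Remark \ref{rm:mesh}, but it is a hypothesis your proof uses and the paper's inf-sup route hides inside the local stability of the $\boldsymbol{P}_2$--$P_0$ pair. With that clarification, both proofs deliver the same bound with the same dependence on the mesh constants.
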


\begin{proof}

For $k>1$, let  $\intcip \colon \Pk_{k-1}(\Omega_h) \to \Pkc_{k-1}(\Omega_h)$ be the Oswald interpolation  operator \cite{Hoppe}.
Then the desired bound was proved in \cite[Lemma 3.2]{CIP}.

For $k=1$, let  $\intcip \colon \Pk_0(\Omega_h) \to \Pk_0(\widetilde{\Omega}_h)$ be the $L^2$-projection operator
\begin{equation}
\label{eq:intcipo0}
( q_0 , \, p_0 - \, \intcip p_0 ) = 0 \qquad  \text{for all $p_0 \in \Pk_0(\Omega_h)$  and $q_0 \in \Pk_0(\widetilde{\Omega}_h)$.} 
\end{equation}
Referring to Assumption \textbf{(MA2)}, for any $M \in \widetilde{\Omega}_h$, we introduce the following notations:
$\boldsymbol{V}_{2,h}(M) := [\Pk_2(\omegaEh)]^3 \cap \boldsymbol{H}^1_0(M)$,
and $Q_{2,h}(M) := \Pk_0(\omegaEh) \cap L^2_0(M)$.
Furthermore we define  the set of internal faces associated to $\omegaEh$
\[
\EdgesI(M) := \{ f \in \Edges \,\, \text{s.t.} \,\, f \in \EdgesE \,\, \text{for some $E \in \omegaEh$ and} \,\, (f \cap \partial M)^\circ = \emptyset \} \,.
\]

By definition \eqref{eq:intcipo0}, 
$(p_0 - \, \intcip p_0)|_{M} \in Q_{2,h}(M)$ for any $M \in \widetilde{\Omega}_h$. Employing the inf-sup stability of the pair  $(\boldsymbol{V}_{2,h}(M), Q_{2,h}(M))$  (see \cite{boffi-brezzi-fortin:book}) we infer

\[
\begin{aligned}
&\Vert (I - \intcip) p_0 \Vert_{M} 
\lesssim
\sup_{\vv_h \in \boldsymbol{V}_{2,h}(M)}
\left(p_0 - \intcip p_0, \, \diver \vv_h \right)_{M}/\Vert \bnabla \vv_h \Vert_{M}
\\
&\begin{aligned}
& =
\sup_{\vv_h \in \boldsymbol{V}_{2,h}(M)}
\sum_{E \in \omegaEh} \left(p_0 - \intcip p_0,\,  \vv_h \cdot \nn_E\right)_{\partial E}/\Vert \bnabla \vv_h \Vert_{M}
&  & \text{(int. by parts)}
\\
& =
\sup_{\vv_h \in \boldsymbol{V}_{2,h}(M)}
\sum_{f \in \EdgesI(M)}
\left(\jump{p_0}_f , \, \vv_h \cdot \nn_f\right)_f/\Vert \bnabla \vv_h \Vert_{M}
&  & \text{(${\vv_h}|_{\partial M}=\boldsymbol{0}$).}
\end{aligned}
\end{aligned}
\]
Then by Cauchy-Schwarz inequality
\begin{equation}
\label{eq:intcipk0}
\Vert (I - \intcip) p_0 \Vert_{M}^2  
\lesssim
\biggl(\sum_{f \in \EdgesI(M)}
h_f \Vert \jump{p_0}_f \Vert_f^2 \biggr) 
\sup_{\vv_h \in \boldsymbol{V}_{2,h}(M)}
\sum_{f \in \EdgesI(M)}
h_f^{-1} \Vert \vv_h \Vert_f^2
/\Vert \bnabla \vv_h \Vert_{M}^2.
\end{equation}
We now prove that the $\sup$ is uniformly bounded.  From \eqref{eq:utile-jump} (with $\alpha=-1$ and $\EdgesI(M)$ and $\omegaEh$ in the place of $\Edges$ and $\Omega_h$ respectively), mesh assumption \textbf{(MP2)}, and the scaled Poincar\'e inequality,  for any $\vv_h \in \boldsymbol{V}_{2,h}(M)$ we infer
\begin{equation}
\label{eq:intcipk1}
\sum_{f \in \EdgesI(M)}
h_f^{-1} \Vert \vv_h \Vert_f^2 \lesssim \sum_{E \in \omegaEh} h_E^{-2} \Vert \vv_h\Vert_E^2
\lesssim h_{M}^{-2} \Vert \vv_h\Vert_{M}^2 \lesssim \Vert \bnabla \vv_h \Vert_{M}^2 \,.
\end{equation}
Then collecting \eqref{eq:intcipk0} and \eqref{eq:intcipk1} and and recalling  mesh properties \textbf{(MP1)} and \textbf{(MP2)} we have
\begin{equation}
\label{eq:intcipk2}
h_{M}  \Vert (I - \intcip) p_0 \Vert_{M}^2  \lesssim 
\sum_{f \in \EdgesI(M)}
h_f^2 \Vert \jump{p_0}_f \Vert_f^2 \,.
\end{equation}
The thesis now follows adding the local bounds \eqref{eq:intcipk2} 
\[
\begin{aligned}
\sum_{E \in \Omega_h} h_E \Vert (I - \intcip) p_0 \Vert_{E}^2 
&\lesssim 
\sum_{M \in \omegaEh} h_{M}  
\Vert (I - \intcip) p_0 \Vert_{M}^2 
%\\ 
%& \lesssim
%\sum_{M \in \omegaEh}   
%\sum_{f \in \EdgesI(M)}
%h_f^2 \Vert \jump{p_0}_f \Vert_f^2
\lesssim 
\sum_{f \in \EdgesI}
h_f^2 \Vert \jump{p_0}_f \Vert_f^2 \,.
\end{aligned}
\]
\end{proof}

\begin{remark}
In the case $k=1$ we have piecewise constant functions and we need to control them with the jumps across faces, which is quite natural. On the other hand, in order to extract the correct scalings in the mesh size we need to apply the associated Poincar\'e inequality on $h$-sized domains (and not on the whole $\Omega$), which is where the presence of the $\intcip$ becomes critical.
\end{remark}

We now prove the following error estimation.
\begin{proposition}[Discretization error]
\label{prp:error equation}
Let Assumption \cfan{(MA1)} hold. Furthermore, if $k=1$ let also Assumption \cfan{(MA2)} hold.
Then, under the regularity assumption \cfan{(RA2)} and assuming that the parameter $\bdma$ (cf. \eqref{eq:forme_d}) is sufficiently large, referring to \eqref{eq:fquantities}, \eqref{eq:quantities} and \eqref{eq:psi}, the following holds
\label{eq:error equation}
\[
\begin{aligned}
\normastab{\eeh}^2 + \normam{\EEh}^2
\lesssim 
(\ls^2 + \gs^2 + \gm^2) h^{2k} \vert \uu \vert_{k+1, \Omega_h}^2 + 
(\lm^2 + \gs^2) h^{2k} \vert \BB \vert_{k+1, \Omega_h}^2 \,,
\end{aligned}
\]
where
\begin{equation}
\label{eq:gamma}
\begin{aligned}
\gs^2 &:= \min\{ \pss^{-1} h^2,  \, \ps^2 h^4\}
\Vert\cc\Vert_{W^{1,\infty}(\Omega_h)}^2 +
\pss^{-1} h^2 \Vert\TT\Vert_{W^{1,\infty}(\Omega_h)}^2 + h\,,
\\
\gm^2 &:=
\min\{ \psm^{-1} h^2,  \, \nm^{-1} h^4\} 
\Vert\TT\Vert_{W^{1,\infty}(\Omega_h)}^2 \,.
\end{aligned}
\end{equation}
\end{proposition}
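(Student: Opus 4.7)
The plan is to combine the coercivity of $\astab$ with Galerkin orthogonality, then to estimate the resulting bilinear form on the interpolation error, exploiting the local orthogonality properties of $\PVVd$ and $\PWWd$ together with the CIP stabilization $J_h$ to obtain bounds that are quasi-robust in $\ns, \nm$ and depend on $\cc, \TT$ only through their $W^{1,\infty}$-norms. Applying Proposition~\ref{prp:coe} to $(\eeh, \EEh) \in \ZZd \times \WWd$ gives $\ccoe(\normastab{\eeh}^2 + \normam{\EEh}^2) \leq \astab(\eeh, \EEh, \eeh, \EEh)$, and subtracting the discrete scheme~\eqref{eq:linear stab fem} from the consistency identity~\eqref{eq:linear stab} yields the Galerkin orthogonality $\astab(\uu-\uu_h, \BB-\BB_h, \vv_h, \HH_h)=0$, so that $\astab(\eeh,\EEh,\eeh,\EEh)=\astab(\eei,\EEi,\eeh,\EEh)$. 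Of the eight constituent bilinear forms in $\astab(\eei,\EEi,\eeh,\EEh)$, the five reaction/diffusion/CIP terms $\pss(\eei,\eeh)$, $\psm(\EEi,\EEh)$, $\ns\ash(\eei,\eeh)$, $\nm\am(\EEi,\EEh)$, $J_h(\eei,\eeh)$ are dispatched by Cauchy--Schwarz and continuity in the norms $\normastab{\cdot}$, $\normam{\cdot}$ combined with Proposition~\ref{prp:interpolation}, contributing at most $\ls^2 h^{2k}|\uu|_{k+1,\Omega_h}^2 + \lm^2 h^{2k}|\BB|_{k+1,\Omega_h}^2$ after Young absorption.

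The convection term $c_h(\eei,\eeh)$ is treated by integrating the volume part $(\bnabla_h\eei\,\cc, \eeh)$ by parts on each element and using $\diver\cc=0$ to swap derivatives onto $\eeh$, producing $-(\eei,(\bnabla_h\eeh)\cc)$ plus DG face terms. Writing $\cc = \Pzerok{0}\cc + (\cc-\Pzerok{0}\cc)$ and invoking the orthogonality~\eqref{eq:orth-v} of $\PVVd$ (since $(\bnabla_h\eeh)\,\Pzerok{0}\cc$ is piecewise in $[\Pk_{k-1}(\Omega_h)]^3$) kills the polynomial part, and Bramble--Hilbert combined with Remark~\ref{rm:utile-min} yields the $\min\{\pss^{-1}h^2, \ps^2 h^4\}\|\cc\|_{W^{1,\infty}}^2$ piece of $\gs^2$. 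The DG face terms are controlled via~\eqref{eq:utilejump2} together with the continuity of $\cc\cdot\nn_f$ across internal faces (inherited from $\cc \in \HH_0(\diver,\Omega)$), contributing the residual $h$ term in $\gs^2$.

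The coupling terms are the real novelty. For $d(\EEh,\eei)=(\bcurl\EEh\times\TT, \eei)$, writing $\TT = \Pzerok{0}\TT + (\TT-\Pzerok{0}\TT)$ and observing that $\bcurl\EEh\times\Pzerok{0}\TT \in [\Pk_{k-1}(\Omega_h)]^3$, the orthogonality~\eqref{eq:orth-v} of $\eei$ eliminates the polynomial part; bounding the remainder via $\|\TT-\Pzerok{0}\TT\|_{L^\infty(E)} \lesssim h_E\|\TT\|_{W^{1,\infty}(E)}$ and $\sum_E h_E^2\|\bcurl\EEh\|_E^2 \lesssim \min\{\psm^{-1}, \nm^{-1}h^2\}\normam{\EEh}^2$ (the latter via an inverse estimate on $\EEh$ combined with the coercivity control on $\normam{\cdot}$) produces the $\gm^2 h^{2k}|\uu|_{k+1,\Omega_h}^2$ contribution after Young. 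For $-d(\EEi,\eeh)$ I would rewrite via the scalar triple product as $-(\bcurl\EEi,\TT\times\eeh)$, integrate by parts elementwise, and expand $\bcurl(\TT\times\eeh) = -\eeh\bdiver\TT + (\eeh\cdot\bnabla)\TT - (\TT\cdot\bnabla)\eeh$ using $\diver\eeh=0$. The first two volume contributions, depending only on $\eeh$ and first derivatives of $\TT$, give the $\pss^{-1}h^2\|\TT\|_{W^{1,\infty}}^2$ piece of $\gs^2$ via $\pss^{1/2}\|\eeh\|\leq\normas{\eeh}$ and~\eqref{eq:err-int} on $\EEi$. The remaining piece $(\EEi,(\TT\cdot\bnabla)\eeh)$ is handled by subtracting $\Pzerok{0}\TT$ and invoking the orthogonality~\eqref{eq:int-orth} of $\PWWd$ through the Oswald-type operator $\intcip$ of Lemma~\ref{lm:cip}, which transfers the piecewise-$\Pk_{k-1}$ content into $h_f^2$-weighted gradient jumps controlled by $\normacip{\eeh}$; the face contributions from the integration by parts are absorbed by the tangential-jump part of $\normacip{\eeh}$ together with~\eqref{eq:utilejump2}.

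The main obstacle, as anticipated in the remark after Lemma~\ref{lm:int-i}, lies in the mismatch between the orthogonality of $\PWWd$—available only against the continuous (or macro-piecewise-constant, for $k=1$) space $\mathbb{O}_{k-1}$—and the broken differential operators acting on $\eeh$: the mismatch forces the use of the Oswald-type operator $\intcip$ of Lemma~\ref{lm:cip}, whose approximation error is exactly the gradient jumps absorbed by $J_h$. The special case $k=1$ is handled uniformly via the macroelement scaling in mesh property~\textbf{(MP2)}. With all eight terms bounded, the final estimate follows by summing them and applying Young's inequality to absorb the $\epsilon(\normastab{\eeh}^2 + \normam{\EEh}^2)$ contributions into the coercive lower bound.
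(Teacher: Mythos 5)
Your proposal is correct and follows essentially the same route as the paper's proof: coercivity plus consistency reduces everything to $\astab(\eei,\EEi,\eeh,\EEh)$, and each coupling/convection term is handled exactly as in the paper, via the orthogonality \eqref{eq:orth-v} of $\PVVd$ against $[\Pk_{k-1}(\Omega_h)]^3$ (after subtracting $\Pzerok{0}\cc$ or $\Pzerok{0}\TT$), the orthogonality \eqref{eq:int-orth} of $\PWWd$ mediated by the operator $\intcip$ of Lemma \ref{lm:cip} with the resulting gradient jumps absorbed into $\normacip{\eeh}$, and the vector identity for $\bcurl(\TT\times\eeh)$ with $\diver \eeh = 0$. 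The only (harmless) discrepancy is bookkeeping: the residual $h$ in $\gs^2$ actually arises from the boundary-face and Oswald terms in the estimate of $-d(\EEi,\eeh)$ (the paper's $\alpha_{4,2}$, $\alpha_{4,3}$), not from the DG face terms of $c_h$, which are already covered by $\ls^2$.
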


\begin{proof}
Employing Proposition \ref{prp:coe}, Problem \eqref{eq:linear stab fem}, equation \eqref{eq:linear stab} and definition \eqref{eq:astab} we get
\begin{equation}
\label{eq:conv0}
\begin{aligned}
\normastab{\eeh}^2 &+ \normam{\EEh}^2  \lesssim
\astab(\eeh, \EEh, \eeh, \EEh) 
\\
&=
\astab(\uu_h, \BB_h, \eeh, \EEh) -
\astab(\uui, \BBi, \eeh, \EEh) 
\\
& =
\astab(\uu, \BB, \eeh, \EEh) -
\astab(\uui, \BBi, \eeh, \EEh) 
= \astab(\eei, \EEi, \eeh, \EEh)
\\
& =
\bigl( \pss (\eei, \eeh) + \ns \ash(\eei, \eeh) \bigr)
+
\bigl(\psm (\EEi, \EEh) + \nm \am(\EEi, \EEh)  \bigr) 
+
\\  
& 
+ c_h(\eei, \eeh) -d(\EEi, \eeh)   + d(\EEh, \eei) + J_h(\eei, \eeh) 
=: \sum_{i=1}^6 \alpha_i \,.
\end{aligned}
\end{equation}

We estimate separately each term in the sum above.

\noindent
$\bullet$ Estimate of $\alpha_1$: 

\begin{equation}
\label{eq:alpha1}
\begin{aligned}
&\alpha_1 
=
\Big( \pss (\eei,\, \eeh) +
\ns (\beps_h(\eei), \, \beps_h(\eeh))
+
\ns \bdma \sum_{f \in \Edges} 
h_f^{-1} (\jump{\eei}_f ,\,  \jump{\eeh}_f )_f
\Big)
+
\\
& 
-
\ns \sum_{f \in \Edges} ( \media{\beps_h(\eei) \nn_f}_f ,\, \jump{\eeh}_f )_f 
-
\ns \sum_{f \in \Edges} ( \jump{\eei}_f ,\, \media{\beps_h(\eeh) \nn_f}_f )_f 
=: \sum_{j=1}^3 \alpha_{1,j} \,.
\end{aligned}
\end{equation}
Cauchy-Schwarz inequality and \eqref{eq:eiS} yield
\begin{equation}
\label{eq:alpha1-1}
\alpha_{1,1} \leq \normas{\eei}  \normas{\eeh}
\lesssim  \ls \, h^{k} \vert \uu \vert_{k+1, \Omega_h}  \normas{\eeh} \,.
\end{equation}
Employing the Cauchy-Schwarz inequality and  \eqref{eq:utilejump2}, the terms $\alpha_{1,2}$ can be bounded as follows
\begin{equation}
\label{eq:alpha1-2}
\begin{aligned}
\alpha_{1,2} \leq  
\biggl( \ns  \bdma^{-1} \sum_{f \in \Edges} h_f \Vert \media{\beps_h (\eei)}_f \Vert_{f}^2 \biggr)^{1/2} 
\normas{\eeh}
\lesssim
\ls \, h^k \vert \uu \vert_{k+1, \Omega_h} \normas{\eeh} \,.
\end{aligned}
\end{equation}
Whereas for $\alpha_{1,3}$ from \eqref{eq:utilejump2} and \eqref{eq:utile-jump} we infer
\begin{equation}
\label{eq:alpha1-3}
\begin{aligned}
\alpha_{1,3} &\leq \ns   
\biggl( \sum_{f \in \Edges} h_f^{-1} \Vert \jump{\eei}_f \Vert_{f}^2 \biggr)^{1/2}
\biggl(  \sum_{f \in \Edges} h_f \Vert \media{\beps_h (\eeh)}_f \Vert_{f}^2 \biggr)^{1/2}
\\
& \lesssim
\ns h^{k} \vert \uu \vert_{k+1, \Omega_h}  
\Vert \beps_h (\eeh) \Vert
\lesssim
\ls \, h^{k} \vert \uu \vert_{k+1, \Omega_h}
\normas{\eeh} \,.
\end{aligned}
\end{equation}
Therefore collecting \eqref{eq:alpha1-1}, \eqref{eq:alpha1-2} and \eqref{eq:alpha1-3} in \eqref{eq:alpha1} we obtain
\begin{equation}
\label{eq:alpha1-f}
\alpha_1 \lesssim \ls \, h^{k} \vert \uu \vert_{k+1, \Omega_h} \normas{\eeh}
\lesssim \ls \, h^{k} \vert \uu \vert_{k+1, \Omega_h} \normastab{\eeh}
\,.
\end{equation}

\noindent
$\bullet$ Estimate of $\alpha_2$: 
from Cauchy-Schwarz inequality and Proposition \ref{prp:interpolation} we get

\begin{equation}
\label{eq:alpha2-f} 
\begin{aligned}
\alpha_2 &=
\psm (\EEi, \EEh) +
\nm  (\bcurl(\EEi) , \bcurl(\EEh)) +
\nm (\diver(\EEi) , \diver(\EEh) ) 
\\
& \lesssim
\normam{\EEi} \normam{\EEh}
\lesssim 
\lm h^{k}\vert \BB\vert_{k+1,\Omega_h} \normam{\EEh} \,.
\end{aligned}
\end{equation}

\noindent
$\bullet$ Estimate of $\alpha_3$: we preliminary observe that being  
$\diver \, \cc = 0$ and $\cc \cdot \nn = 0$ on $\partial \Omega$, an integration by parts yield 
\begin{equation}
\label{eq:alpha2--2}
\begin{aligned}
&(( \bnabla_h \eei ) \, \cc,  \eeh )
=
- (( \bnabla_h \eeh ) \, \cc,  \eei )
+ \sum_{f \in \EdgesI} \int_f
(\cc \cdot \nn_f) \jump{\eei \cdot \eeh}_f 
\, {\rm d}f \,.
\end{aligned}
\end{equation}
Furthermore for any $f \in \EdgesI$ direct computations give
\begin{equation}
\label{eq:alpha2--1}
 \jump{\eei \cdot \eeh}_f  =
\jump{\eei}_f \cdot \media{\eeh}_f 
+
\media{\eei}_f \cdot \jump{\eeh}_f \,.
\end{equation}
Therefore from \eqref{eq:alpha2--2} and \eqref{eq:alpha2--1},  recalling definition \eqref{eq:forme_d2}, we obtain
\begin{equation}
\label{eq:alpha3}
\begin{aligned}
\alpha_3 &= - (( \bnabla_h \eeh ) \, \cc, \eei )
+ \sum_{f \in \EdgesI} ((\cc \cdot \nn_f) \media{\eei}_f , \jump{\eeh}_f )_f +
\\
& + 
\bdmc \sum_{f \in \EdgesI} ( \vert \cc \cdot \nn_f \vert \jump{\eei}_f , \jump{\eeh}_f )_f
 =: \sum_{j=1}^3 \alpha_{3,j} \,.
\end{aligned}
\end{equation}
Recalling \eqref{eq:orth-v}, for the term $\alpha_{3,1}$ we infer
\begin{equation}
\label{eq:alpha3-1}
\begin{aligned}
\alpha_{3,1} & =
(\eei\,, ( \bnabla_h \eeh ) \, ((\Pzerok{0} - I)\cc)) 
\leq 
\sum_{E \in \Omega_h}
\Vert ( I - \Pzerok{0})\cc\Vert_{L^\infty(E)} \Vert \eei \Vert_{E} \Vert \bnabla_h \eeh \Vert_{E}
\\
& \begin{aligned}
& \lesssim 
\sum_{E \in \Omega_h}
h_E \Vert\cc\Vert_{W^1_\infty(E)} \Vert \eei \Vert_{E} \Vert \bnabla_h \eeh \Vert_{E}
&  & \text{(Lemma \ref{lm:bramble})}
\\
& \lesssim
\Vert\cc\Vert_{W^{1,\infty}(\Omega_h)} \Vert \eei \Vert 
\biggl( \sum_{E \in \Omega_h} h_E^2 \Vert \bnabla_h \eeh \Vert_{E}^2 \biggr)^{1/2}
&  & \text{(Cauchy-Schwarz)}
\\
& 
\lesssim
\min\{ \pss^{-1/2},  \ps h\} 
\Vert\cc\Vert_{W^{1,\infty}(\Omega_h)} h^{k+1} \vert \uu \vert_{k+1, \Omega_h} \normas{\eeh}
&  & 
\text{(Lemma \ref{lm:int-v} \& \eqref{eq:utile-min})}
\\
& 
\lesssim
\gs h^k \vert \uu \vert_{k+1, \Omega_h} \normas{\eeh}
&  & \text{(def. \eqref{eq:gamma})}
\end{aligned}
\end{aligned}
\end{equation}
For the term $\alpha_{3,2}$ from the Cauchy-Schwarz inequality and \eqref{eq:utilejump2} we infer
\begin{equation}
\label{eq:alpha3-2}
\begin{aligned}
\alpha_{3,2} 
&\leq 
\bdmc^{-1/2} \max_{f \in \EdgesI} \Vert \cc \cdot \nn_f \Vert_{L^\infty(f)}^{1/2} 
\bigl( \sum_{f \in \EdgesI} 
\Vert \media{\eei}_f\Vert_f^2
\bigr)^{1/2}
\normaupw{\eeh}
\\
& \lesssim
\max_{f \in \EdgesI} \Vert \cc \cdot \nn_f \Vert_{L^\infty(f)}^{1/2} 
h^{k+1/2} \vert \uu\vert_{k+1, \Omega_h} 
\normaupw{\eeh}
\lesssim
\ls \,
h^{k} \vert \uu\vert_{k+1, \Omega_h} 
\normaupw{\eeh} \,.
\end{aligned}
\end{equation}
Finally for the last term, the Cauchy-Schwarz inequality and \eqref{eq:eiupw} easily imply
\begin{equation}
\label{eq:alpha3-3}
\begin{aligned}
\alpha_{3,3} 
& \leq \normaupw{\eei} \normaupw{\eeh}
\lesssim
\ls \,
h^{k} \vert \uu\vert_{k+1, \Omega_h} 
\normaupw{\eeh} \,.
\end{aligned}
\end{equation}
Therefore from \eqref{eq:alpha3}, \eqref{eq:alpha3-1}, \eqref{eq:alpha3-2} and \eqref{eq:alpha3-3} we have
\begin{equation}
\label{eq:alpha3-f}
\begin{aligned}
\alpha_3 &\lesssim
h^{k} \vert \uu\vert_{k+1, \Omega_h} 
\left(  \gs \, \normas{\eeh}  +
\ls \normaupw{\eeh} \right) 
\lesssim
(\ls + \gs) h^{k} \vert \uu\vert_{k+1, \Omega_h} 
\normastab{\eeh} \,.
\end{aligned}
\end{equation}

\noindent
$\bullet$ Estimate of $\alpha_4$: a vector calculus identity and an integration by parts yield
\begin{equation}
\label{eq:alpha-4}
\begin{aligned}
\alpha_4 & = 
-(\bcurl (\EEi) \times \TT,  \eeh  )
= ( \bcurl (\EEi) , \eeh  \times \TT)
\\
& =
\sum_{E \in \Omega_h} 
(\EEi,  \bcurl_h (\eeh  \times \TT))_E +
\sum_{E \in \Omega_h}
(\EEi \times \nn_E ,   \eeh  \times \TT)_{\partial E}
\\
& = 
(\EEi , \bcurl_h (\eeh  \times ((I - \Pzerok{0})\TT))) +
(\EEi , \bcurl_h (\eeh  \times (\Pzerok{0}\TT))) +
\\
& +
\sum_{f \in \EdgesI}
(\EEi \times \nn_f , \jump{\eeh}  \times \TT)_f 
=: \sum_{j=1}^3 \alpha_{4,j}
\end{aligned}
\end{equation}
where in the summation over the faces we can restrict on the internal faces since, for every boundary face $f$, recalling that $\TT \cdot \nn_f$, $\eeh \cdot \nn_f  =0$ on $\partial \Omega$ it holds:
\[
\begin{aligned}
(\EEi \times \nn_f) \cdot  (\eeh \times \TT) 
& =
(\EEi \cdot \eeh) (\TT \cdot \nn_f) -
(\EEi \cdot \TT) (\eeh \cdot \nn_f) = 0 \,.
\end{aligned}
\]
For $\alpha_{4,1}$ applying the Cauchy-Schwarz inequality we infer
\begin{equation}
\label{eq:alpha4-1}
\begin{aligned}
\alpha_{4,1} & \leq
\sum_{E \in \Omega_h} 
\Vert \EEi\Vert_{E} 
\Vert \bcurl_h (\eeh  \times ((I - \Pzerok{0})\TT)) \Vert_{E} 
\\
& \lesssim
\sum_{E \in \Omega_h} 
\Vert \EEi\Vert_{E} 
\left(
\Vert (I - \Pzerok{0})\TT \Vert_{L^\infty(E)} \Vert \bnabla_h \eeh \Vert_{E} +
\Vert (I - \Pzerok{0})\TT \Vert_{W^{1,\infty}(E)} \Vert \eeh \Vert_{E}
\right)
\\
& \begin{aligned}
& \lesssim
\sum_{E \in \Omega_h} 
\Vert \EEi\Vert_{E} 
\Vert \TT \Vert_{W^{1,\infty}(E)}
( h_E \Vert\bnabla_h \eeh \Vert_{E} + \Vert \eeh \Vert_{E})
& & \text{(Lemma \ref{lm:bramble})}
\\
& \lesssim
\Vert \TT \Vert_{W^{1,\infty}(\Omega_h)}
\Vert \EEi\Vert 
\Vert \eeh\Vert 
\lesssim
\pss^{-1/2} \,
\Vert \TT \Vert_{W^{1,\infty}(\Omega_h)}
\Vert \EEi \Vert 
\normas{\eeh}
& & \text{(Lemma \ref{lm:inverse})}
\\
& \lesssim
\gs
h^{k} \vert \BB \vert_{k+1,\Omega_h} 
\normas{\eeh}
& & \text{(\eqref{eq:err-int} \& \eqref{eq:gamma})}
\end{aligned}
\end{aligned}
\end{equation}
For the estimate of the term $\alpha_{4,2}$ we proceed as follows. 
Being $\eeh \in \ZZd$ and $\Pzerok{0}\TT$ constant on each element, the vector calculus identity 
$$
\bcurl(A \times B) = (\diver B) \, A - (\diver A) \, B + (\bnabla A) B - (\bnabla B) A
$$
yields
\[
\bcurl_h(\eeh \times (\Pzerok{0}\TT)) = (\bnabla_h \eeh) (\Pzerok{0}\TT) := \pp_{k-1}  \in [\Pk_{k-1}(\Omega_h)]^3 \, .
\]
Therefore from \eqref{eq:int-orth}, the Cauchy-Schwarz inequality,
\eqref{eq:err-int}  and Lemma \ref{lm:cip}  
we infer
\[
\begin{aligned}
\alpha_{4,2} & = 
( \EEi \,, \pp_{k-1}  )
=
( \EEi \,, (I - \intcip)\pp_{k-1}) 
\\
&\leq
\biggl( \sum_{E \in \Omega_h} h_E^{-1}\Vert \EEi \Vert_{E}^2 \biggl)^{1/2} 
\biggl( \sum_{E \in \Omega_h} h_E\Vert (I - \intcip)\pp_{k-1}  \Vert_{E}^2 \biggl)^{1/2} 
\\
&\lesssim
h^{1/2} h^k \vert \BB \vert_{k+1,\Omega_h} 
\biggl(  \sum_{f \in  \EdgesI} h_f^2
\Vert \jump{\pp_{k-1}}_f \Vert_{f}^2 \biggr)^{1/2} \,.
\end{aligned}
\]
Furthermore
\[
\begin{aligned}
\sum_{f \in  \EdgesI} & h_f^2 
\Vert \jump{\pp_{k-1}}_f \Vert_{f}^2
\lesssim
\sum_{f \in \EdgesI} \left(
h_f^2  \Vert \jump{\bnabla_h \eeh}_f \TT \Vert_{f}^2 
+  
h_f^2
\Vert \jump{(\bnabla_h \eeh) ((I - \Pzerok{0})\TT)}_f\Vert_{f}^2 
\right)
\\
& \begin{aligned}
& \lesssim
\bdmJJ^{-1} \normacip{\eeh}^2 + 
\sum_{E \in \Omega_h} 
h_E \Vert (I - \Pzerok{0}) \TT \Vert_{L^\infty(E)}^2  
\Vert \bnabla_h \eeh \Vert_E^2 
& \quad & \text{(bound \eqref{eq:utile-jump})}
\\
& \lesssim
\bdmJJ^{-1} \normacip{\eeh}^2 + 
\sum_{E \in \Omega_h} 
h_E^3 \Vert \TT \Vert_{W^1_\infty(E)}^2  
\Vert \bnabla_h \eeh \Vert_E^2 
& \quad & \text{(Lemma \ref{lm:bramble})}
\\
& \lesssim
\bdmJJ^{-1} \normacip{\eeh}^2 + 
\pss^{-1}h
\Vert\TT\Vert_{W^{1,\infty}(\Omega_h)}^2  \normas{\eeh}^2
& \quad & 
\text{(Lemma \ref{lm:inverse})
}
\end{aligned}
\end{aligned}
\]
Hence, recalling definition \eqref{eq:gamma}, we have
\begin{equation}
\label{eq:alpha4-2}
\begin{aligned}
\alpha_{4,2} 
%&\lesssim
%h^{k}\vert \BB \vert_{k+1,\Omega_h} 
%\left( h^{1/2} \normacip{\eeh} +
% \pss^{-1/2}h 
%\Vert\TT\Vert_{W^{1,\infty}(\Omega_h)}  \normas{\eeh} \right)
\lesssim
\gs \, h^{k}\vert \BB \vert_{k+1,\Omega_h} 
\normastab{\eeh}
  \,.
\end{aligned}
\end{equation}
Whereas for the term $\alpha_{4,3}$ we have
\begin{equation}
\label{eq:alpha4-3}
\begin{aligned}
\alpha_{4,3} & \leq
\biggl(\bdmJ^{-1} \sum_{f \in \EdgesI}
\Vert \EEi \Vert_{f}^2 \biggr)^{1/2} \normacip{\eeh}
& & \text{(Cauchy-Schwarz)}
\\
& \lesssim
\biggl(\bdmJ^{-1} \sum_{E \in \Omega_h}
(h_E^{-1} \Vert \EEi \Vert_{E}^2 + 
h_E \Vert \bnabla \EEi \Vert_{E}^2) \biggr)^{1/2} \normacip{\eeh}
& & \text{(bound \eqref{eq:utile-jump1})}
\\
& \lesssim
h^{1/2} h^k |\BB|_{k+1, \Omega_h} \normacip{\eeh}
\lesssim
\gs
h^{k} \vert \BB \vert_{k+1,\Omega_h} 
\normacip{\eeh}
& & \text{(\eqref{eq:err-int} \&  \eqref{eq:gamma})}
\end{aligned}
\end{equation}
Collecting \eqref{eq:alpha-4}, \eqref{eq:alpha4-1}, \eqref{eq:alpha4-2} and \eqref{eq:alpha4-3}, we finally obtain
\begin{equation}
\label{eq:alpha4-f}
\begin{aligned}
\alpha_4 
\lesssim
\gs \, h^{k}\vert \BB \vert_{k+1,\Omega_h} 
\normastab{\eeh}
  \,.
\end{aligned}
\end{equation}

\noindent
$\bullet$ Estimate of $\alpha_5$: we make the following computations

\begin{equation}
\label{eq:alpha5-f}
\begin{aligned}
\alpha_5 & = 
(\bcurl (\EEh) \times \TT \,, \eei )
= (\bcurl (\EEh) \times ((I - \Pzerok{0})\TT) \,, \eei  )
& & \text{(by \eqref{eq:orth-v})}
\\
& \leq \sum_{E \in \Omega_h}
\Vert (I - \Pzerok{0})\TT \Vert_{L^\infty(E)}
\Vert \bcurl (\EEh) \Vert_{E} 
\Vert \eei \Vert_{E}
& & \text{(Cauchy-Schwarz)}
\\
& \lesssim \sum_{E \in \Omega_h}
h_E \Vert \TT \Vert_{W^{1,\infty}(E)}
\Vert \bnabla \EEh \Vert_{E} 
\Vert \eei \Vert_{E}
& & \text{(Lemma \ref{lm:bramble})}
\\
& \lesssim 
\Vert \TT \Vert_{W^{1,\infty}(\Omega_h)}
\Vert \eei \Vert
\biggl(\sum_{E \in \Omega_h}
h_E^2 
\Vert \bnabla \EEh \Vert_{E}^2 \biggr)^{1/2} 
& & \text{(Cauchy-Schwarz)}
\\
& \lesssim 
\min \{ \psm^{-1/2},  \nm^{-1/2} h\} 
\Vert \TT \Vert_{W^{1,\infty}(\Omega_h)}
h^{k+1} \vert \uu \vert_{k+1, \Omega_h}
\normam{\EEh} 
& & \text{(Lemma \ref{lm:inverse} \& \eqref{eq:int-v})}
\\
& \lesssim 
\gm h^{k} \vert \uu \vert_{k+1, \Omega_h}
\normam{\EEh} 
& & \text{(def. \eqref{eq:gamma})}
\end{aligned}
\end{equation}

\noindent
$\bullet$ Estimate of $\alpha_6$: from the Cauchy-Schwarz inequality and \eqref{eq:eicip} we infer

\begin{equation}
\label{eq:alpha6-f}
\begin{aligned}
\alpha_6 &=
\sum_{f \in \EdgesI} \left( \bdmJ 
( \TT \times \jump{\eei}_f , \TT \times \jump{\eeh}_f )_f +
\bdmJJ h_f^2 
(\jump{\bnabla_h \eei}_f \TT ,\jump{\bnabla_h \eeh}_f \TT)_f
\right)
\\
& \leq
\normacip{\eei}
\normacip{\eeh}
\lesssim \ls \, h^k \vert \uu \vert_{k+1, \Omega_h} \normacip{\eeh} 
\lesssim \ls \, h^k \vert \uu \vert_{k+1, \Omega_h} \normastab{\eeh}
\,.
\end{aligned}
\end{equation}
Therefore collecting in \eqref{eq:conv0} the estimate \eqref{eq:alpha1-f}, \eqref{eq:alpha2-f}, \eqref{eq:alpha3-f}, \eqref{eq:alpha4-f}, \eqref{eq:alpha5-f}, \eqref{eq:alpha6-f}, we obtain
\[
\begin{aligned}
\normastab{\eeh}^2 + \normam{\EEh}^2
&\lesssim
(\ls + \gs) h^k \vert \uu\vert_{k+1, \Omega_h}  \normastab{\eeh}
+
\gs h^k \vert \BB\vert_{k+1, \Omega_h}  \normastab{\eeh}
+
\\
&+
(\lm \vert \BB\vert_{k+1, \Omega_h}  + \gm \vert \uu\vert_{k+1, \Omega_h}) h^k  \normam{\EEh} \,.
\end{aligned}
\]
The thesis easily follows from the Cauchy-Schwarz inequality.
\end{proof}

From Proposition \ref{prp:interpolation} and Proposition \ref{prp:error equation}
we immediately derive the following convergence result.
 
\begin{theorem}
\label{thm:conv}
Let Assumption \cfan{(MA1)} hold. Furthermore, if $k=1$ let also Assumption \cfan{(MA2)} hold.
Then, under the regularity assumption \cfan{(RA2)} and assuming that the parameter $\bdma$ (cf. \eqref{eq:forme_d}) is sufficiently large, referring to \eqref{eq:quantities} and \eqref{eq:gamma}, the following holds
\label{eq:full error equation}
\[
\begin{aligned}
\normastab{\uu - \uu_h}^2 &+ \normam{\BB - \BB_h}^2 \lesssim
\\
&
(\ls^2 + \gs^2 + \gm^2) h^{2k} \vert \uu \vert_{k+1, \Omega_h}^2 + (\lm^2 + \gs^2) h^{2k} \vert \BB \vert_{k+1, \Omega_h}^2 \,.
\end{aligned}
\]
\end{theorem}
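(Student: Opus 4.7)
The plan is to exploit the splitting $\uu - \uu_h = \eei - \eeh$ and $\BB - \BB_h = \EEi - \EEh$ (with the sign conventions in \eqref{eq:fquantities}) and apply the triangle inequality in the two energy norms $\normastab{\cdot}$ and $\normam{\cdot}$. This reduces the proof of the theorem to controlling separately the interpolation contributions $\normastab{\eei}$, $\normam{\EEi}$ and the discretization contributions $\normastab{\eeh}$, $\normam{\EEh}$.

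First, I would bound the interpolation part using Proposition \ref{prp:interpolation}, which gives directly
\[
\normastab{\eei}^2 \lesssim \ls^2\, h^{2k}\,\vert \uu\vert_{k+1,\Omega_h}^2,
\qquad
\normam{\EEi}^2 \lesssim \lm^2\, h^{2k}\,\vert \BB\vert_{k+1,\Omega_h}^2.
\]
Then I would bound the discretization part using Proposition \ref{prp:error equation}, which gives
\[
\normastab{\eeh}^2 + \normam{\EEh}^2
\lesssim (\ls^2+\gs^2+\gm^2)\,h^{2k}\,\vert \uu\vert_{k+1,\Omega_h}^2
+ (\lm^2+\gs^2)\,h^{2k}\,\vert \BB\vert_{k+1,\Omega_h}^2.
\]
Since $\ls^2$ is dominated by $\ls^2+\gs^2+\gm^2$ and $\lm^2$ is dominated by $\lm^2+\gs^2$, summing the two bounds and using $(a+b)^2\lesssim a^2+b^2$ after applying the triangle inequality yields exactly the desired estimate.

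There is essentially no genuine obstacle here: the theorem is an immediate corollary of the two previous propositions, and all the analytical work (coercivity, consistency, handling of the CIP stabilization, the local interpolant $\PWWd$ with its orthogonality property, the projection operator $\intcip$, and the careful estimate of the six terms $\alpha_1,\dots,\alpha_6$) has already been absorbed into Proposition \ref{prp:error equation}. The only care to take is to verify that the hypotheses of both propositions coincide with those of the theorem, which is the case: Assumption \cfan{(MA1)} (and \cfan{(MA2)} for $k=1$), the regularity assumption \cfan{(RA2)}, and sufficiently large $\bdma$ are precisely the hypotheses shared by Propositions \ref{prp:interpolation} and \ref{prp:error equation}.
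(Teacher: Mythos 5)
Your proposal is correct and follows exactly the paper's route: the paper derives Theorem \ref{thm:conv} immediately from Proposition \ref{prp:interpolation} and Proposition \ref{prp:error equation} via the splittings $\uu-\uu_h=\eei-\eeh$, $\BB-\BB_h=\EEi-\EEh$ and the triangle inequality. Your check that the hypotheses of the two propositions match those of the theorem is the only point requiring care, and you handled it correctly.
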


\begin{remark}[Quasi-robustness]
The proposed method, assuming (as usual in the stationary case) the presence of a positive reaction term, guarantees $O(h^k)$ error estimates which are robust in the diffusion parameters. Furthermore, in a convection-dominated pre-asymptotic regime (that is when the quantities $\ns,\nm$ are dominated by the scaled convections $h \cc, h \TT$) an enhanced error reduction rate $O(h^{k+1/2})$ is obtained.
\end{remark}

\begin{remark}[Pressure robustness] 
\label{rem:press-rob}
As already observed, the proposed scheme is pressure robust in the sense of \cite{john-linke-merdon-neilan-rebholz:2017} (a modification of the continuous problem that only affects the pressure leads to changes in the discrete solution that only affect the discrete pressure). The estimate in Theorem \ref{thm:conv} reflects such property of the scheme, since the velocity error estimate is independent of the continuous pressure $p$. 
\end{remark}

\begin{theorem}[Error estimates for the pressure]
\label{thm:conv pre}
Under the same assumptions as in Theorem \ref{thm:conv} and
referring to \eqref{eq:quantities}, \eqref{eq:psi} and \eqref{eq:gamma}, the following holds
\label{eq:full error pre}
\[
\begin{aligned}
\Vert p - p_h \Vert
&\lesssim 
(\ts + \ls + \Vert \cc \Vert_{L^\infty(\Omega)} \pss^{-1/2} )
(1 + \bdma^{-1/2}) \normastab{\uu - \uu_h} +
\\ 
&  +
\Vert \TT \Vert_{W^1_\infty(\Omega_h)} \psm^{-1/2} (1 + \bdma^{-1/2})\normam{\BB - \BB_h} 
+ 
h^k \, \vert p \vert_{k, \Omega_h} \,.
\end{aligned}
\]
\end{theorem}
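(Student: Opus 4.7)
The plan is to combine a pressure error equation derived from the momentum equation with the discrete inf-sup property of the $\mathrm{BDM}_k/\Pk_{k-1}$ pair. First I split $p - p_h = (p - \Pi_Q p) + (\Pi_Q p - p_h)$, where $\Pi_Q$ denotes the $L^2(\Omega)$-orthogonal projection onto $\Qd$; the first piece is bounded by $h^k|p|_{k,\Omega_h}$ by a standard Bramble--Hilbert argument, accounting for the last term in the statement. For the discrete piece, the classical inf-sup condition for BDM$_k/\Pk_{k-1}$ in the broken $H^1$ norm $\|\cdot\|_{1,h}$ (see Lemma \ref{lm:norma1h}) yields
\[
\|\Pi_Q p - p_h\| \lesssim \sup_{\vv_h \in \VVd\setminus\{0\}} \frac{b(\vv_h, \Pi_Q p - p_h)}{\|\vv_h\|_{1,h}},
\]
and since $\diver(\VVd) \subseteq \Pk_{k-1}(\Omega_h)$ one has $b(\vv_h, p - \Pi_Q p) = 0$, so $b(\vv_h, \Pi_Q p - p_h) = b(\vv_h, p - p_h)$.

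The residual is identified as follows. Testing the strong form of the momentum equation in \eqref{eq:linear primale} against $\vv_h \in \VVd$, integrating $(\nabla p, \vv_h)$ by parts element by element (the interior face contributions cancel by $\HH(\diver)$-conformity of $\vv_h$ and vanish on $\partial\Omega$ since $\vv_h\cdot\nn = 0$), and then invoking the consistency identities \eqref{eq:forme_con} and subtracting \eqref{eq:linear fem}, yield
\[
b(\vv_h, p - p_h) = -\pss(\uu - \uu_h, \vv_h) - \ns \ash(\uu - \uu_h, \vv_h) - c_h(\uu-\uu_h, \vv_h) + d(\BB-\BB_h, \vv_h) - J_h(\uu-\uu_h, \vv_h).
\]

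Each term on the right is then controlled in the form (factor)$\cdot\|\vv_h\|_{1,h}$ by continuity arguments. The reaction term gives the factor $\pss^{1/2}\leq\ts$; standard continuity of the DG form $\ash$, combined with the identity $\bdma\sum_f h_f^{-1}\|\jump{\vv_h}\|_f^2 \leq \|\vv_h\|_{1,h}^2$, produces $\ns^{1/2}(1+\bdma^{-1/2})\leq\ls(1+\bdma^{-1/2})$; the CIP form $J_h$ is absorbed into the same group via trace/inverse inequalities on $\vv_h$ and the bound $\|\TT\|_{L^\infty}^2 h \leq \ls^2$ coming from \eqref{eq:quantities}. For the convection term I integrate by parts using $\diver\cc = 0$ and $\cc\cdot\nn = 0$ to move the gradient from $\uu-\uu_h$ onto $\vv_h$; recombining with the upwind jump penalties and applying $\|\uu-\uu_h\|\leq\pss^{-1/2}\normastab{\uu-\uu_h}$ gives the factor $\|\cc\|_{L^\infty(\Omega)}\pss^{-1/2}(1+\bdma^{-1/2})$.

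The delicate point is the coupling contribution $d(\BB-\BB_h, \vv_h) = (\bcurl(\BB-\BB_h), \TT\times\vv_h)$: a direct Cauchy--Schwarz would introduce an undesired $\nm^{-1/2}$ factor on the curl, whereas the statement requires the $\psm^{-1/2}\|\TT\|_{W^1_\infty(\Omega_h)}$ combination. To obtain it I integrate the curl back onto $\TT\times\vv_h$ element by element; the interior face integrals vanish because $\BB-\BB_h \in \HH^1(\Omega)$ has no tangential jumps, and the boundary contributions cancel thanks to $(\BB-\BB_h)\cdot\nn = \vv_h\cdot\nn = \TT\cdot\nn = 0$. Expanding the resulting volume term via
\[
\bcurl(\TT\times\vv_h) = \TT\diver\vv_h - \vv_h\diver\TT + (\vv_h\cdot\bnabla)\TT - (\TT\cdot\bnabla_h)\vv_h,
\]
and using $\|\BB-\BB_h\|\leq\psm^{-1/2}\normam{\BB-\BB_h}$ together with $\|\bnabla_h\vv_h\|\lesssim\|\vv_h\|_{1,h}$ (Lemma \ref{lm:korn}), one obtains the required bound $\|\TT\|_{W^1_\infty(\Omega_h)}\psm^{-1/2}(1+\bdma^{-1/2})\normam{\BB-\BB_h}\|\vv_h\|_{1,h}$. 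The main obstacle is precisely this integration-by-parts juggling in the coupling term; once it is performed, dividing by $\|\vv_h\|_{1,h}$, taking the supremum, and combining with the approximation estimate for $p-\Pi_Q p$ yields the stated bound via the triangle inequality.
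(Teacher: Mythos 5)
Your overall strategy coincides with the paper's: split $p-p_h$ via the $L^2$-projection onto $\Pk_{k-1}(\Omega_h)$, use the discrete inf-sup for $\mathrm{BDM}_k/\Pk_{k-1}$ in the $\Vert\cdot\Vert_{1,h}$ norm together with $b(\vv_h,p)=b(\vv_h,\Pi_{k-1}p)$, write the residual $b(\vv_h,p-p_h)$ through consistency, and bound the five resulting terms exactly as in the paper's $\beta_1,\dots,\beta_4$. Your term-by-term factors ($\ts$ for reaction, $\ls(1+\bdma^{-1/2})$ for the viscous and CIP parts, $\Vert\cc\Vert_{L^\infty}\pss^{-1/2}$ for convection after skew-symmetrization) match the paper's \eqref{eq:beta}.

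There is, however, one genuine flaw, precisely at the point you yourself identify as delicate. After writing $d(\BB-\BB_h,\vv_h)=(\bcurl(\BB-\BB_h),\TT\times\vv_h)$ and integrating by parts element by element, the interior face contributions are
\[
\sum_{f\in\EdgesI}\bigl(\nn_f\times(\BB-\BB_h),\,\jump{\TT\times\vv_h}_f\bigr)_f ,
\]
i.e.\ they involve the jump of the \emph{test} function $\TT\times\vv_h$, not of $\BB-\BB_h$. Your justification (``$\BB-\BB_h\in\HH^1(\Omega)$ has no tangential jumps'') is the wrong continuity: $\vv_h\in\VVd$ is only $\HH(\diver)$-conforming, so its tangential components jump and $\jump{\TT\times\vv_h}_f=\TT\times\jump{\vv_h}_f\neq\boldsymbol{0}$ in general. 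These face terms do not vanish. Using the normal continuity of $\vv_h$ they reduce to $\sum_f\bigl((\TT\cdot\nn_f)(\BB-\BB_h),\jump{\vv_h}_f\bigr)_f$, which must be estimated by a trace inequality on $\BB-\BB_h$ and absorbed into the penalty part of $\Vert\vv_h\Vert_{1,h}$ — this is exactly where the factor $\bdma^{-1/2}$ multiplying $\normam{\BB-\BB_h}$ in the statement comes from; if the face terms truly vanished, that factor would be absent. Your treatment of the volume term (the $\bcurl(\TT\times\vv_h)$ expansion, yielding $\Vert\TT\Vert_{W^1_\infty}\psm^{-1/2}$) and of the external boundary faces (cancellation via $\TT\cdot\nn=\vv_h\cdot\nn=0$) is correct; only the interior-face piece is missing, and it is a substantive, not cosmetic, omission.
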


\begin{proof}
We simply sketch the proof since it is derived using standard arguments.
We preliminary observe that, combining the classical inf-sup arguments for the BDM element in \cite{boffi-brezzi-fortin:book} with the Poincar\'e inequality for piecewise regular functions \cite[Corollary 5.4]{DiPietro}, there exists $\ww_h \in \VVd$ such that
\begin{equation}
\label{eq:p-infsup}
\Vert \ww_h \Vert_{1,h} \lesssim 1 \,, 
\qquad \text{and} \qquad
\Vert p_h - \Pi_{k-1} p\Vert 
\lesssim b(\ww_h \,, p_h - \Pi_{k-1} p) \,.
\end{equation}
Furthermore, recalling the definition of $L^2$-projection operator, being $\diver(\VVd) \subseteq \Pk_{k-1}(\Omega_h)$, the following holds
\begin{equation}
\label{eq:p-orth}
b(\vv_h \,, p) =
b(\vv_h \,, \Pi_{k-1} p)
\qquad 
\text{for all $\vv_h \in \VVd$.}
\end{equation}
Therefore, combining \eqref{eq:p-infsup} and \eqref{eq:p-orth} with Problems \eqref{eq:linear variazionale} and \eqref{eq:linear fem}, and recalling \eqref{eq:forme_con},  we infer
\begin{equation}
\label{eq:p-error}
\begin{aligned}
\Vert p_h -  \Pi_{k-1} p\Vert  &
\lesssim
b(\ww_h \,, p_h - \Pi_{k-1} p)
=
b(\ww_h \,, p_h - p)
\\
& =
\bigl( \pss(\uu - \uu_h, \ww_h) + \ns \ash(\uu - \uu_h, \ww_h) \bigr)+
c_h(\uu - \uu_h, \ww_h) + \\  
&
-d(\BB - \BB_h, \ww_h)    + J_h(\uu - \uu_h, \ww_h)
 =: \sum_{i=1}^4 \beta_i \,.
\end{aligned}
\end{equation}
Using similar arguments to that in the proof of Proposition \ref{prp:error equation} we bound each term as follows:
\begin{equation}
\label{eq:beta}
\begin{aligned}
\beta_1 & \lesssim \ts (1 + \bdma^{-1/2}) \normastab{\uu - \uu_h} \,,
\\
\beta_2 & \lesssim (\ls \bdma^{-1/2} + 
\Vert \cc \Vert_{L^\infty(\Omega)} \pss^{-1/2}(1 + \bdma^{-1/2})) \normastab{\uu - \uu_h} \,,
\\
\beta_3 & \lesssim 
\Vert \TT \Vert_{W^1_\infty(\Omega_h)} \psm^{-1/2}(1 + \bdma^{-1/2}) \normam{\BB - \BB_h} \,,
\\
\beta_4 & \lesssim 
\ls(1 + \bdma^{-1/2}) \normastab{\uu - \uu_h} \,.
\end{aligned}
\end{equation}
The proof follows combining \eqref{eq:p-error} and \eqref{eq:beta} and Lemma \ref{lm:bramble}.
\end{proof}

%\begin{remark}
%A convergence result for vanishing discretization parameter $h$, without any additional requirement on the solution regularity, can be also derived for the proposed method. Since the main focus of the present work is on convection robustness, a detailed investigation in this direction is beyond our scopes. We here limit ourselves in noting that (without any further assumption on the solution regularity) one can prove that the discrete solution $({\bf u}_h, {\bf B}_h, p_h)$ will converge to the exact one $({\bf u}, {\bf B}, p)$ as 
%$h$ tends to zero in a suitably weak sense. We refer the interested reader to \cite{preprint}, where the detailed result can be found.
%\end{remark}

\subsection{Convergence without requirements on solution regularity}
For completeness, in this section we include a convergence result for vanishing discretization parameter $h$, without any additional requirement on the solution regularity. In the following the  symbol $\lesssim$ will denote a bound up to a generic positive constant, independent of the mesh size $h$, but which may depend on the diffusive coefficients $\ns$ and $\nm$ and on the reaction coefficients $\pss$ and $\psm$,

\begin{proposition}
Let Assumption \cfan{(MA1)} hold. Let $\{ \uu_h, p_h, \BB_h \}_h$ denote a sequence of solutions of the discrete problem \eqref{eq:linear fem} with mesh parameter $h$ tending to zero, and let $(\uu,p,\BB) \in \VVc \times \Qc \times \WWc$ be solution of the continuous problem \eqref{eq:linear variazionale}. 
Then, for vanishing $h$, it holds (for any $2 \le p < 6$)
$$
\begin{aligned}
& \uu_h \rightarrow \uu \quad &\text{in $L^p(\Omega)$} \, , 
\qquad &p_h \rightharpoonup p \quad &\text{weakly in  $L^2(\Omega)$} \, , \\
& \BB_h \rightarrow \BB \quad &\text{in $L^p(\Omega)$} \,, 
\qquad &\BB_h \rightharpoonup \BB \quad &\text{weakly in  $H^1(\Omega)$} \, .
\end{aligned}
$$
\end{proposition}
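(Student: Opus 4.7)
The plan is to follow a compactness-and-identification strategy:
(i) derive uniform $h$-independent bounds for $(\uu_h, \BB_h, p_h)$ by testing the discrete problem with the discrete solution itself;
(ii) extract a weakly/strongly convergent subsequence through discrete Rellich compactness;
(iii) identify the limit as the continuous solution by passing to the limit in \eqref{eq:linear stab fem} with interpolants of smooth test functions;
(iv) invoke uniqueness of \eqref{eq:linear variazionale} to promote convergence to the full sequence. Since the implicit constants are allowed to depend on $\ns, \nm, \pss, \psm$ in this section, none of the uniform-in-parameter machinery of Section \ref{sub:err} is needed.

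For step (i), testing \eqref{eq:linear stab fem} with $(\uu_h, \BB_h)$ and using the coercivity of Proposition \ref{prp:coe} (whose proof is purely discrete) together with Cauchy--Schwarz and Poincar\'e-type inequalities yields
\[
\normastab{\uu_h}^2 + \normam{\BB_h}^2 \lesssim \Vert \ff \Vert^2 + \Vert \GG \Vert^2,
\]
while the discrete inf-sup for the BDM/$\Pk_{k-1}$ pair combined with the momentum equation gives $\Vert p_h \Vert \lesssim 1$. For step (ii), the control on $\normam{\BB_h}$ is an $H^1$-bound, so by Rellich one extracts $\BB_h \rightharpoonup \BB^*$ weakly in $H^1$ and strongly in $L^p$, $2 \le p < 6$. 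For the velocity, $\normastab{\uu_h}$ controls the broken gradient (via the discrete Korn inequality, Lemma \ref{lm:korn}) and the scaled jumps $\sum_f h_f^{-1} \Vert \jump{\uu_h}_f \Vert_f^2$; this is precisely the data needed for a discrete Rellich theorem for broken-$H^1$/$\HH(\diver)$-DG spaces (Di Pietro--Ern type), yielding $\uu_h \to \uu^*$ strongly in $L^p$, $2 \le p < 6$. The conditions $\uu_h \in \HH_0(\diver)$, $\diver \uu_h = 0$, together with vanishing jumps in the weak limit, identify $\uu^* \in \VVc$ with $\diver \uu^* = 0$. Finally $p_h \rightharpoonup p^*$ weakly in $L^2$ by boundedness.

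For step (iii), choose smooth test functions $\vv \in \ZZc$, $\HH \in \WWc$ and set $\vv_h := \PVVd \vv$, $\HH_h := \PWWd \HH$; Proposition \ref{prp:interpolation} gives convergence of these interpolants to $\vv, \HH$ in the discrete norms at rate $O(h^k)$. Inserting them into \eqref{eq:linear stab fem}, one passes to the limit term by term. The salient facts are: every DG jump/average involving the test rewrites as $\jump{\vv_h - \vv}_f$ since $\vv$ is continuous and so vanishes in the relevant scaled $L^2$ sense; in the convection term $c_h(\uu_h, \vv_h)$, integration by parts using $\diver \cc = 0$ transfers derivatives onto the smooth $\vv_h$, so only the strong $L^p$ convergence of $\uu_h$ is required; the couplings $d(\BB_h, \vv_h)$ and $d(\HH_h, \uu_h)$ pass by weak-strong pairing of $\bcurl \BB_h \rightharpoonup \bcurl \BB^*$ against strongly convergent $\vv_h$, and symmetrically; the CIP term is bounded by $\normacip{\uu_h}\,\normacip{\vv_h} \lesssim h^{k+1/2} \to 0$; the pressure coupling $b(\vv_h, p_h)$ converges by weak convergence of $p_h$ against the strongly convergent $\diver \vv_h$. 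Density extends the identity to all admissible test functions, so $(\uu^*, p^*, \BB^*)$ solves \eqref{eq:linear variazionale}. Step (iv) is then immediate from uniqueness of the continuous solution, and independence of the subsequence promotes the convergence to the whole sequence.

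The main obstacle is step (ii): strong $L^p$ convergence of the non-conforming BDM-DG velocity $\uu_h$ requires a discrete Rellich theorem tailored to broken Sobolev spaces, and one must then verify that the weak limit $\uu^*$ inherits the correct vanishing tangential trace, despite only controlling averaged jump norms. A secondary subtlety is that the pressure convergence can only be weak, since the scheme provides no jump/stabilization control on $p_h$.
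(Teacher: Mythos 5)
Your overall strategy (a priori bound, compactness, identification of the limit, uniqueness) coincides with the paper's, and your steps (i) and (iv) match Steps 1 and 5 of the paper's proof almost verbatim. The two arguments diverge in how the intermediate steps are carried out. For the velocity compactness, the paper does not invoke a black-box discrete Rellich theorem: it builds the conforming companion $\uuht = \intcip(\uu_h)$ with the Oswald operator, bounds its $H^1$ norm by $\normastab{\uu_h}$, applies the classical Rellich theorem to $\uuht$, and shows $\Vert \uu_h - \uuht\Vert_{L^p(\Omega)} \lesssim h^{\rho/2}\to 0$ from the jump control; the same companion then delivers the boundary condition of the limit via $\Vert\uuht\Vert_{\partial\Omega}^2\lesssim h$, which is exactly the trace issue you flag as your ``main obstacle''. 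Citing a Di Pietro--Ern type discrete compactness result is a legitimate alternative (such theorems are proved by essentially this reconstruction), but you must check that the version you invoke includes the boundary faces in the jump seminorm so that the limit lands in $\Hunozero$; the paper's self-contained route settles this explicitly.

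The one step that, as written, would not go through is the term-by-term limit passage in (iii). The paper deliberately evaluates the discrete form at the smooth test function $\ww$ itself, so that $\jump{\ww}_f \equiv 0$ and every test-side jump term vanishes identically, and confines the interpolant to a correction $X_h(\ww - \PVVd\ww)$ which is $O(h^k)$ by the a priori bound. If instead you insert $\vv_h = \PVVd\vv$ directly, the symmetric consistency term $\sum_{f\in\Edges}(\jump{\uu_h}_f, \media{\beps_h(\vv_h)\nn_f}_f)_f$ is not covered by your remark that jumps and averages involving the test vanish: here the test sits in an \emph{average}, which converges to the nonzero single-valued trace $\beps(\vv)\nn_f$, while the trial jump only satisfies $\sum_f \Vert\jump{\uu_h}_f\Vert_f^2 \lesssim h$, so Cauchy--Schwarz yields $O(h^{1/2})\cdot O(h^{-1/2}) = O(1)$ and no decay. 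Proving that this term tends to zero requires an additional element-wise integration by parts combined with the weak convergence of the broken gradient $\bnabla_h\uu_h \rightharpoonup \bnabla\uu$ (itself a consequence of the vanishing jumps), or simply switching to the paper's bookkeeping. This is a fixable but genuine gap in the proposal; the remaining limit passages you list (convection by transposition onto the smooth test, weak-strong pairing for the coupling terms, the $O(h^{k+1/2})$ decay of the CIP contribution, and the pressure term) are sound.
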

\begin{proof}
We present the proof briefly since it follows quite standard arguments.
We split the proof into several steps.
%% +++

\noindent 
\textsl{Step 1: A priori estimate and convergence for $\BB_h$ and $p_h$.}
We start by observing that, first combining \eqref{eq:linear stab} with Proposition \ref{prp:coe}, then using classical inf-sup arguments for the BDM element, one easily obtains
\begin{equation}
\label{eq:apriori}
\normastab{\uu_h}^2 + \normam{\BB_h}^2 + \| p_h \|^2 \leq C 
\end{equation}
where the constant $C$ depends on $\ff,\GG$ but is uniform in $h$.
As a consequence, there exist $\BB \in [H^1(\Omega)]^2$, $p \in L^2(\Omega)$ and suitable subsequences (which we still denote by $\BB_h$ and $p_h$ for simplicity) such that
\begin{equation}
\label{eq:later0}
\begin{aligned}
\BB_h \rightharpoonup \BB \quad \text{weakly in  $H^1(\Omega)$,}
\quad  
\BB_h \rightarrow \BB \quad \text{in  $L^p(\Omega)$,} 
\quad p_h \rightharpoonup p \quad \text{weakly in  $L^2(\Omega)$.}
\end{aligned}
\end{equation}
Note that, here above, we commit a small abuse by denoting already such limits with $\BB$ and $p$ (we will actually prove below in \textsl{Step 4} that such limits are the solutions of the continuous problem).

\noindent
\textsl{Step 2: Convergence for $\uu_h$.}
The velocity variable is more involved, due to the non-conformity of the discrete space $\VVd$ (cf. \eqref{eq:spazi_d}). 
We start by introducing $\uuht = \intcip (\uu_h)$, where $\intcip \colon \Pk_{k}(\Omega_h) \to \Pkc_{k}(\Omega_h)$ denotes the Oswald interpolation  operator \cite{Hoppe} (cf. also Lemma \ref{lm:cip}). 
For any $E \in \Omega_h$, let $\Edgesloc$  denote the set of faces in $\Edges$ with closure that has non-null intersection with the closure of $E$.
Then employing Lemma 3.2 in \cite{CIP} and \textbf{(MP1)}, for any $E \in \Omega_h$ the following holds
\begin{equation}\label{later:1}
\| \uu_h - \uuht \|_{E}^2 \lesssim
\sum_{f \in \Edgesloc}
h_f \Vert \jump{\uu_h}_f \Vert_f^2 
\lesssim h_E^2 \sum_{f \in \Edgesloc}
h_f^{-1} \Vert \jump{\uu_h}_f \Vert_f^2 \, .
\end{equation}
First by the Korn inequality, by simple manipulations,  by inverse estimate (cf. Lemma \ref{lm:inverse}) coupled with \eqref{later:1}, and finally employing the a priori estimate \eqref{eq:apriori}, we infer
$$
\begin{aligned}
\Vert \uuht \Vert^2 +  
\Vert \bnabla \uuht \Vert^2 &\lesssim 
\Vert \uuht \Vert^2 + \| \beps(\uuht) \|_\Omega^2 
\\
&\lesssim 
\sum_{E \in \Omega_h} \Big( \Vert \uu_h - \uuht \Vert_{E}^2 + 
\Vert \bnabla(\uu_h - \uuht) \Vert_{E}^2 \Big) +
\Vert \uu_h \Vert^2_E + 
\Vert \beps(\uu_h) \Vert^2
\\
& \lesssim \sum_{E \in \Omega_h}
\sum_{f \in \Edgesloc}
h_f^{-1} \Vert \jump{\uu_h}_f \Vert_f^2 +
\Vert \uu_h \Vert^2_E + 
\Vert \beps(\uu_h) \Vert^2
\\
& \lesssim \normastab{\uu_h}^2  \leq C \, .
\end{aligned}
$$
As a consequence, since $\uuht \in [H^1(\Omega)]^3$, by standard arguments it exists a sub-sequence such that (for $2 \leq p < 6$)
\begin{equation}\label{later:3}
\uuht \rightharpoonup \uu \quad \textrm{weakly in } H^1(\Omega) \, , 
\quad 
\uuht \rightarrow \uu \quad \textrm{in } L^p(\Omega) \, ,
\end{equation}
where again we label the limit already as $\uu$ with an abuse.
Let now, for any $2 \leq p < 6$, the positive real $\rho := 6/p -1$.
We now combine an inverse estimate with \eqref{later:1} and bound \eqref{eq:apriori}, obtaining for any $E \in \Omega_h$
\begin{equation}\label{later:2}
\begin{aligned}
 \| \uu_h - \uuht \|_{L^p(E)}^2 &\lesssim 
h_E^{6/p-3} \| \uu_h - \uuht \|_E^2 \lesssim h_E^\rho \sum_{f \in \Edgesloc}
h_f^{-1} \Vert \jump{\uu_h}_f \Vert_f^2\, .
\end{aligned}
\end{equation}
Notice that \eqref{later:2} easily implies
\begin{equation}
\label{later:2bis}
\max_{E \in \Omega_h} \| \uu_h - \uuht \|_{L^p(E)} 
\lesssim h^{\rho/2} \normastab{\uu_h}  \leq h^{\rho/2} C^{1/2}
\lesssim 1  \,.
\end{equation}
Then from \eqref{later:2} and \eqref{later:2bis}, we derive
$$
\begin{aligned}
\| \uu_h - \uuht \|_{L^p(\Omega)}^p &= 
\sum_{E \in \Omega_h} \| \uu_h - \uuht \|_{L^p(E)}^{p-2} \| \uu_h - \uuht \|_{L^p(E)}^2 
\lesssim \sum_{E \in \Omega_h} \| \uu_h - \uuht \|_{L^p(E)}^2 \\
& \lesssim \sum_{E \in \Omega_h} h_E^\rho \sum_{f \in \Edgesloc}
h_f^{-1} \Vert \jump{\uu_h}_f \Vert_f^2
\lesssim h^\rho \normastab{\uu_h}^2  \leq h^\rho C \, .
\end{aligned}
$$
The above bound, combined with \eqref{later:3} immediately yields (for $h \rightarrow 0$)
\begin{equation}\label{later:4}
\uu_h \rightarrow \uu \qquad \textrm{in } L^p(\Omega) \, .
\end{equation}

\noindent
\textsl{Step 3: The limit $(\uu, p, \BB) \in \VVc \times \Qc \times \WWc$.}
Employing analogous arguments to that in \textsl{Step 2}, we can obtain 
\[
\begin{aligned}
\Vert \uuht\Vert^2_{\partial \Omega}   
& \lesssim
\sum_{f \in \EdgesB} \Vert \uuht - \uu_h\Vert^2_{f} + 
\sum_{f \in \EdgesB} \Vert \uu_h\Vert^2_{f} 
\\
& \lesssim
\sum_{f \in \Edges} \Vert \uuht - \uu_h\Vert^2_{f} + 
h \sum_{f \in \Edges} h_f^{-1}\Vert \jump{\uu_h}_f\Vert^2_{f} 
\\
& \lesssim
\sum_{E \in \Omega_h} h_E^{-1} \Vert \uuht - \uu_h\Vert^2_{E} + 
h \normastab{\uu_h}^2
&\quad & \text{(by \eqref{eq:utile-jump})}
\\
& \lesssim
h \normastab{\uu_h}^2 \leq h C
&\quad & \text{(by \eqref{later:1} \& \eqref{eq:apriori})}
\end{aligned}
\]
that is $\uuht|_{\partial\Omega}$ converges to zero in $L^2(\partial \Omega)$ as $h \rightarrow 0$.
Combining the result above with compact Sobolev embeddings and trace theorem, we derive (here we can take any $1/2 <s < 1$)
\begin{equation}
\label{eq:step3}
\Vert \uu  \Vert_{\partial \Omega} = 
\lim_{h \to 0} \Vert \uu  - \uuht \Vert_{\partial \Omega} 
\lesssim
\lim_{h \to 0} \Vert \uu  - \uuht \Vert_{s, \Omega} = 0\,,
\end{equation}
i.e. $\uu \in \VVc$. 
The same argument in \eqref{eq:step3} easily applies to $\BB \cdot \nn$, therefore $\BB \in \WWc$ (an alternative way to verify the boundary conditions for $\BB$ is to exploit that convex subsets which are closed in the norm topology are closed also in the weak topology). We finally notice that from \eqref{eq:later0} we infer
$(p, 1) = \lim_{h \to 0 } (p_h, 1) = 0$ i.e. $p \in \Qc$.

\noindent
\textsl{Step 4: The limit $(\uu, p, \BB)$ solves \eqref{eq:linear variazionale}.}
We start by proving that $\uu$ realizes the second equation in \eqref{eq:linear variazionale}.
%In order to show that $\uu \in \ZZc$, we argue as follows. 
Since $\diver \, \uu_h=0$ for all $h$, from \eqref{later:4} we have %(for the converging subsequence)
\[
b(\uu, q)= - (\uu, \nabla q) = 
- \lim_{h \to 0} (\uu_h, \nabla q) =
\lim_{h \to 0} b(\uu_h, q) = 0 
\quad \text{for all $q \in C^\infty_0(\Omega)$.}
\]
The condition $b(\uu, q)=0$ for all $q \in \Qc$  now follows by density arguments.

Let us now analyse the first equation in \eqref{eq:linear variazionale}.
For any $h>0$, let $X \colon \VVc \to \R$ and $X_h \colon \VVd \to \R$ be the linear forms defined by
\[
\begin{aligned}
X(\ww) &:= \pss (\uu, \ww) + \ns \as(\uu, \ww) + c(\uu, \ww)  
-d(\BB, \ww)  + b(\ww, p) \,,
\\
X_h(\ww) &:= \pss (\uu_h, \ww) + \ns \ash(\uu_h, \ww) + c_h(\uu_h, \ww) -d(\BB_h, \ww)  + J_h(\uu_h, \ww) + b(\ww, p_h) \,.
\end{aligned}
\]
Notice that, from \eqref{later:4} and \eqref{eq:later0}, for any $\ww \in  [ C^\infty_0(\Omega)]^2$ we infer
\[
\begin{aligned}
\pss (\uu, \ww) &= \lim_{h \to 0} \pss (\uu_h, \ww) \,;
\\
\ns \as(\uu, \ww) &= - \ns(\uu, \diver (\beps(\ww))) 
= \lim_{h \to 0} - \ns(\uu_h, \diver (\beps(\ww)))
\\
&= \lim_{h \to 0}  \ns\biggl(
(\beps_h(\uu), \beps(\ww)) - \sum_{f \in \Sigma_h}(\jump{\uu_h}_f, \media{\beps(\ww)}_f)_f
\biggr) 
\\
&= \lim_{h \to 0} \ns \ash(\uu_h, \ww)  \,;
\\
c(\uu, \ww) &= - (\uu, (\bnabla \ww) \cc) = 
\lim_{h \to 0} - (\uu_h, (\bnabla \ww) \cc) 
\\
&= \lim_{h \to 0} \bigl((( \bnabla_h \uu_h ) \, \cc, \, \ww )
- \sum_{f \in \EdgesI} ( (\cc \cdot \nn_f) \jump{\uu_h}_f ,\, \media{\ww}_f)_f \bigr) 
\\
&
= \lim_{h \to 0} c_h(\uu_h, \ww) \,;
\\
-d(\BB, \ww) &= (\BB, \bcurl(\ww \times \TT)) = 
\lim_{h \to 0} (\BB_h, \bcurl(\ww \times \TT)) =
\lim_{h \to 0} -d(\BB_h, \ww) \,;
\\
b(\ww, p) &= \lim_{h \to 0} b(\ww, p_h) \,;
\end{aligned}
\]
therefore, being $J_h(\uu_h, \ww) = 0$, we conclude that
\begin{equation}
\label{eq:step4-2}
X(\ww) = \lim_{h \to 0} X_h(\ww) 
\quad \text{for all $\ww \in  [ C^\infty_0(\Omega)]^2$.}
\end{equation}
For any $h>0$, let $\PVVd^h \colon \VVc \to \VVd$ be the interpolation operator of Lemma \ref{lm:int-v}, then, direct computations, yield
\begin{equation}
\label{eq:step4-3}
\begin{aligned}
 X_h(\ww)  &=  X_h(\PVVd^h \ww)  +  X_h(\ww - \PVVd^h\ww) 
 = (\ff, \PVVd^h \ww) + X_h(\ww - \PVVd^h\ww) \\
 &= (\ff, \ww) + X_h(\ww - \PVVd^h\ww) + (\ff, \PVVd^h\ww - \ww) \quad \text{for all $\ww \in  [ C^\infty_0(\Omega)]^2$.}
\end{aligned}
\end{equation}
Using similar arguments to that in Proposition \ref{prp:interpolation} and employing \eqref{eq:apriori}, we  derive
\[
\begin{aligned}
|X_h(\ww - \PVVd^h\ww) + (\ff, \PVVd^h\ww - \ww)| &\lesssim
\bigl(\normastab{\uu_h} + \normam{\BB_h} + \Vert p_h \Vert + \Vert \ff \Vert) h^{k} |\ww|_{k+1, \Omega_h} 
\\
&\lesssim C h^{k} |\ww|_{k+1, \Omega_h} 
\,.
\end{aligned}
\]
Combining the previous bound with \eqref{eq:step4-2} and \eqref{eq:step4-3} we obtain
\[
\pss (\uu, \ww) + \ns \as(\uu, \ww) + c(\uu, \ww)  
-d(\BB, \ww)  + b(\ww, p) = 
(\ff, \ww)  
\quad  \text{$\forall \ww \in  [ C^\infty_0(\Omega)]^2$,} 
\]
and by density argument we recover the first equation in \eqref{eq:linear variazionale}.
The third equation in \eqref{eq:linear variazionale} can be derived using analogous techniques.

\noindent
\textsl{Step 5: Convergence of the whole sequence $\{\uu_h, p_h, \BB_h\}_h$.} We finally note that since the linear problem \eqref{eq:linear variazionale} admits a unique solution $(\uu, p, \BB)$, using standard arguments, we conclude that the whole sequence $\{\uu_h, p_h, \BB_h\}_h$ converges to $(\uu, p, \BB)$.
\end{proof}

\section{Numerical Experiments}\label{sec:num}

In this section we will present a set of numerical tests supporting the theoretical results derived in Section \ref{sec:theo}. 
%Each experiment will focus on analyzing a particular error norm or semi-norm, which will be specified accordingly.

To analyze the robustness of the proposed method under a convection-dominant regime, 
we will consider the same problem in each subsection. 
However, we will vary the values of $\ns$ and $\nm$ to explore different scenarios.
We solve the problem defined in Equation~\eqref{eq:linear variazionale} in $\Omega=(0,\,1)^3$
where the data are chosen in such a way that the exact solution is:
\[
\begin{gathered}
\uu(x,\,y,\,z):= \left[\begin{array}{r}
                  \sin(\pi x)\cos(\pi y)\cos(\pi z)\\
                  \sin(\pi y)\cos(\pi z)\cos(\pi x)\\
                  2\sin(\pi z)\cos(\pi x)\cos(\pi y)\\
                 \end{array}\right]\,,              
\qquad
\BB(x,\,y,\,z):= \begin{bmatrix}
                  \sin(\pi y)\\
                  \sin(\pi z)\\
                  \sin(\pi x)
                 \end{bmatrix}\,,
\\                                  
p(x,\,y,\,z) := \sin(x)+\sin(y)-2\sin(z)\,,
\end{gathered}
\]
and we set $\cc=\uu$ and $\TT=\BB$, in accordance with the underlying non-linear MHD problem. Then, the reaction coefficients $\pss$ and $\psm$ are fixed to 1.

Given a tetrahedral mesh of $\Omega$, denoted as usual by $\Omega_h$, 
we define its mesh size as the mean value of tetrahedrons diameters, i.e.,
$h := \min_{E\in\Omega_h} h_E\,.$
In order to develop the numerical convergence analysis, 
we construct a family of Delaunay tetrahedral meshes with decreasing mesh size
using the library \texttt{tetgen}~\cite{Si:2015:TAD}.
We refer to these meshes as \mesh{0},$\,\,$\mesh{1},$\,\,$\mesh{2},$\,\,$\mesh{3} and \mesh{4}.
As an example, in Figure~\ref{fig:mesh2} we show \mesh{2}.
In the subsequent numerical analysis we consider $k=1$ and $k=2$.
In order to partially balance the computational costs, will use the set of meshes \mesh{1},$\,\,$\mesh{2}, $ $\mesh{3} and \mesh{4} for $k=1$,
and the set \mesh{0},$\,\,$\mesh{1}, \mesh{2} and \mesh{3} for $k=2$.

\begin{figure}[!htb]
\centering 
\begin{tabular}{ccc}
\includegraphics[width=0.35\textwidth]{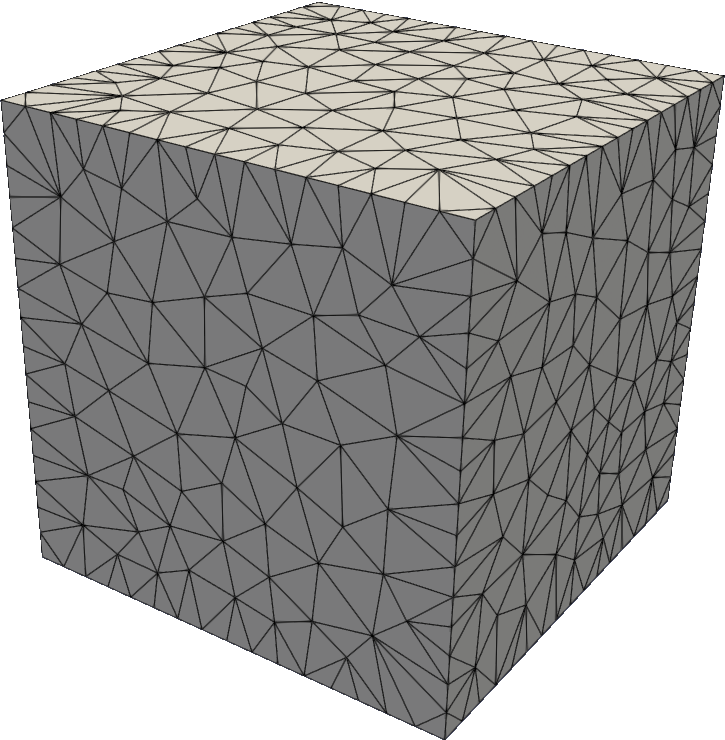}&
\qquad
\includegraphics[width=0.35\textwidth]{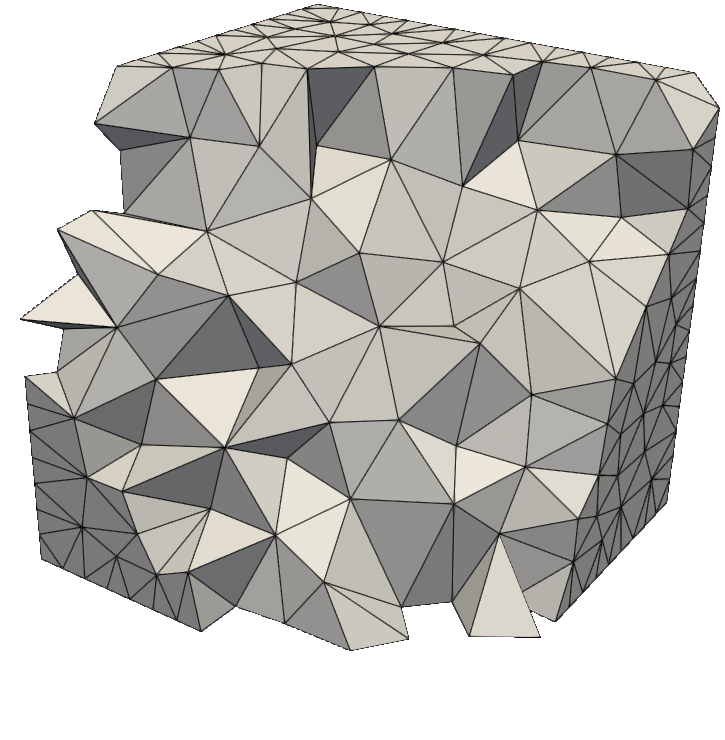}
\end{tabular}
\caption{\mesh{2} and a detail inside.}
\label{fig:mesh2}
\end{figure}

We use the label \mfs{} to refer to method \eqref{eq:linear fem}, that includes a stabilization term for both the fluid and magnetic fields.
For comparison purposes, we consider also a more standard upwind stabilization for the fluid equations, without the additional stabilizing form $J_h$ (cf. \eqref{eq:J}); we denote such scheme by \fs{}.
Then, following~\cite{upwinding, DiPietro},
we set $\bdmc=1$, $\bdmJ = 5$ and $\bdmJJ=0.01$ for both approximation degrees,
while $\bdma$ will be equal to 10 and 20 for $k=1$ and $k=2$, respectively.

\subsection{Fluid and magneto convective dominant regime}

In  this test we consider a convective dominant regime. 
We set $\ns=\nm=\nu$ whit $\nu = \texttt{1.0e-05},\texttt{1.0e-10}$.
We examine the behaviour of the proposed method looking at different error indicators.
More specifically, we use the $H^1$ semi-norm and $L^2$ norm errors for both velocity and magnetic fields,
the $L^2$ norm for the pressure and the norm $\normastab{\cdot}$ for the velocity field  (cf. ~\eqref{eq:normeVW}).

\begin{figure}[!htb]
\centering
\begin{tabular}{cc}
\includegraphics[width=0.46\textwidth]{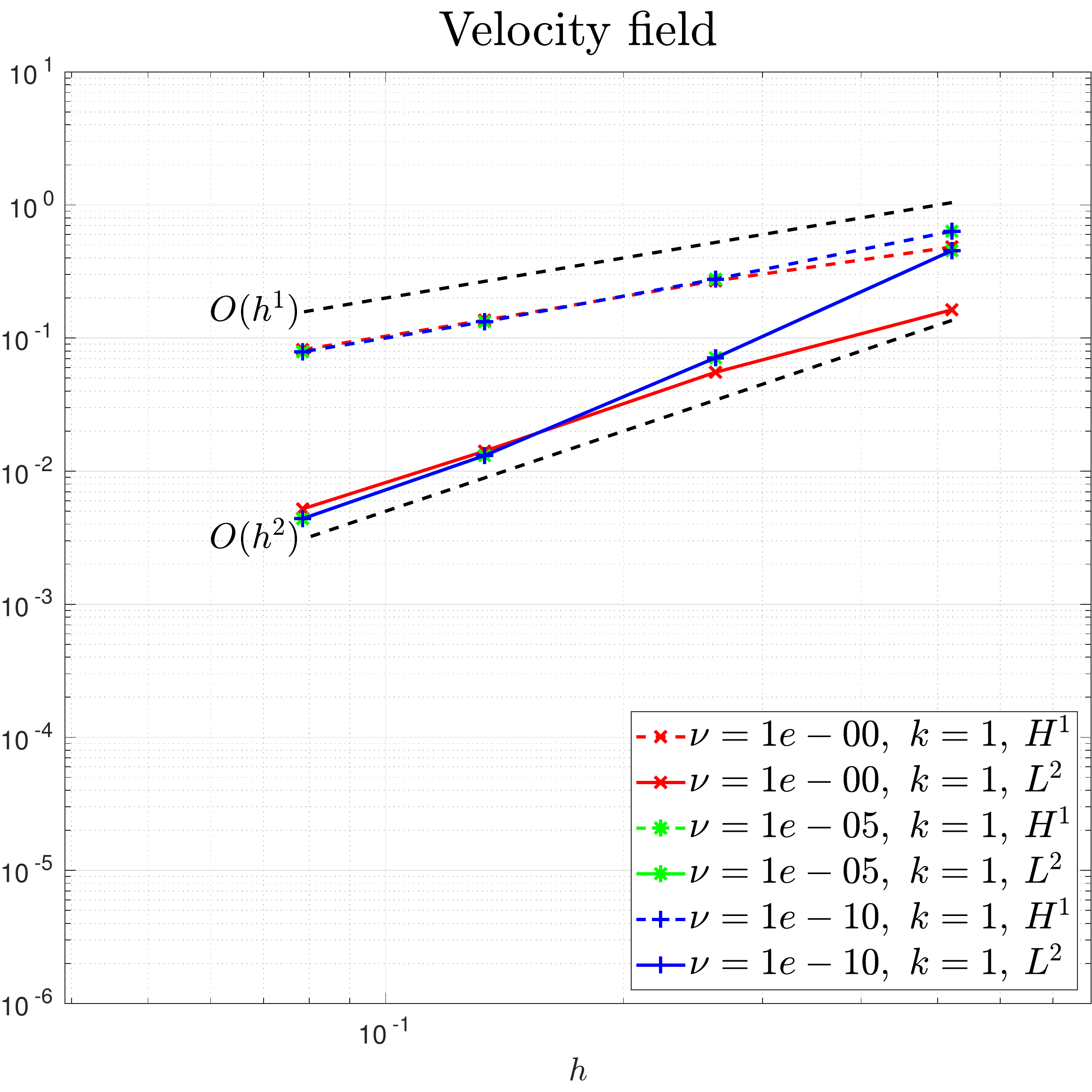}& 
\includegraphics[width=0.46\textwidth]{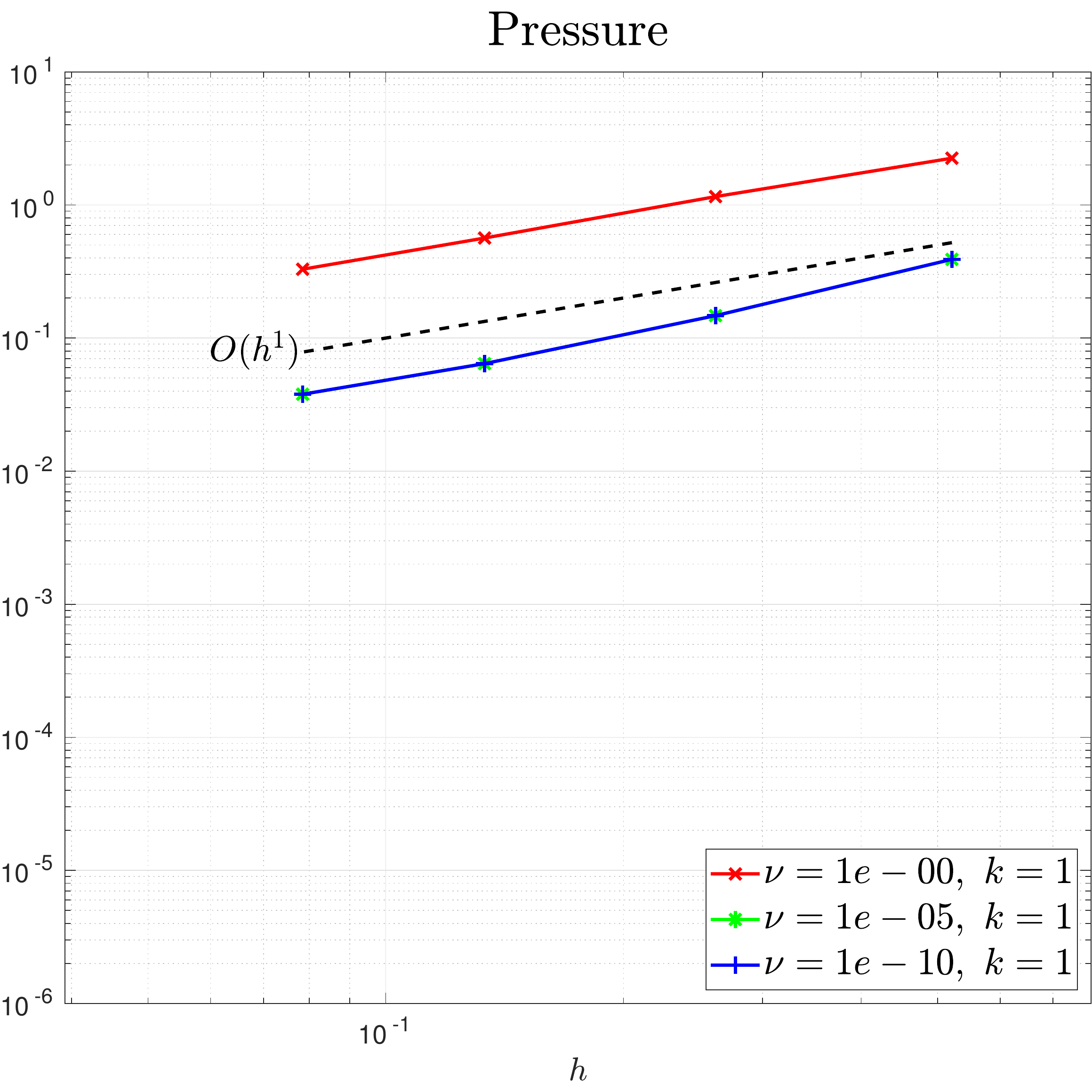}\\
\includegraphics[width=0.46\textwidth]{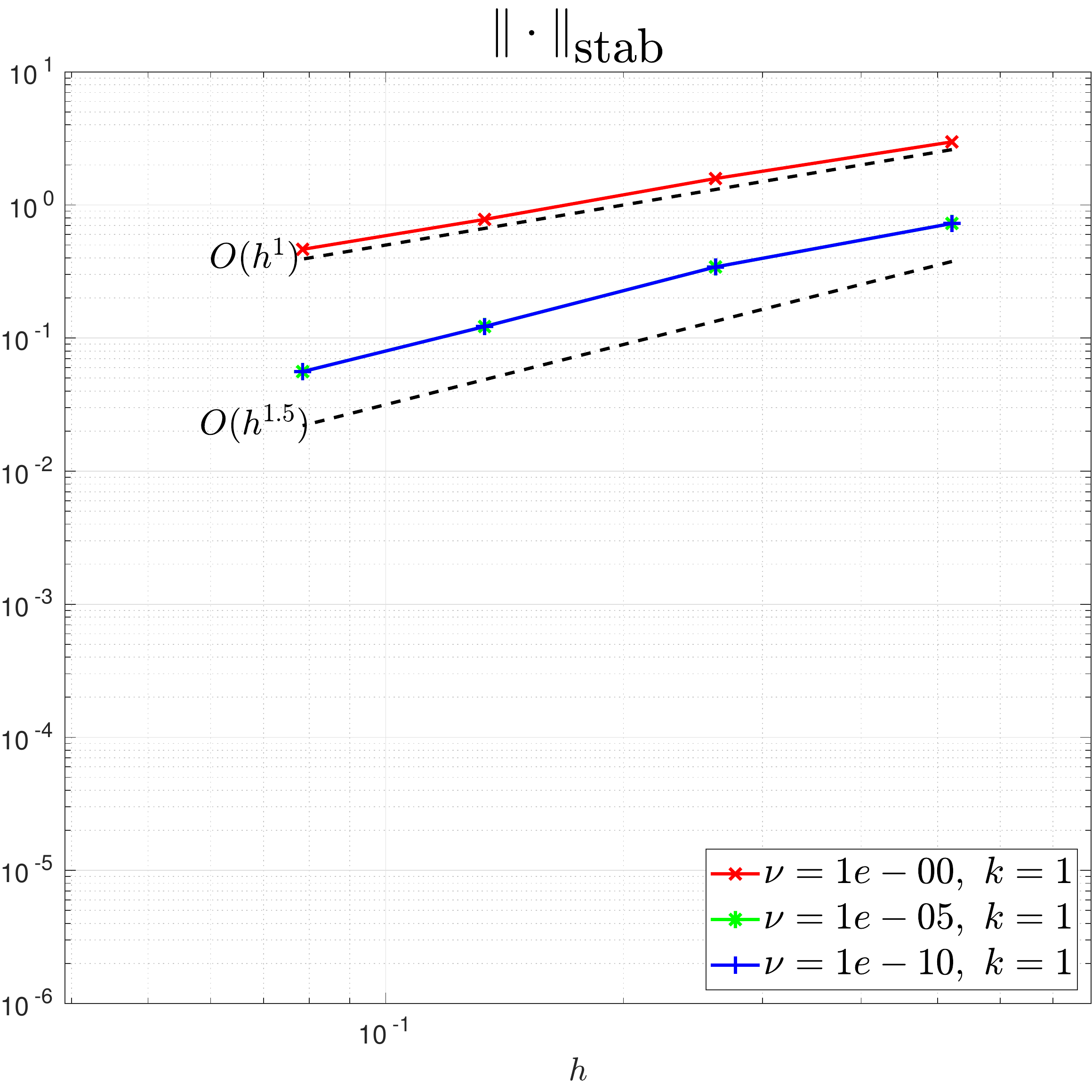}&
\includegraphics[width=0.46\textwidth]{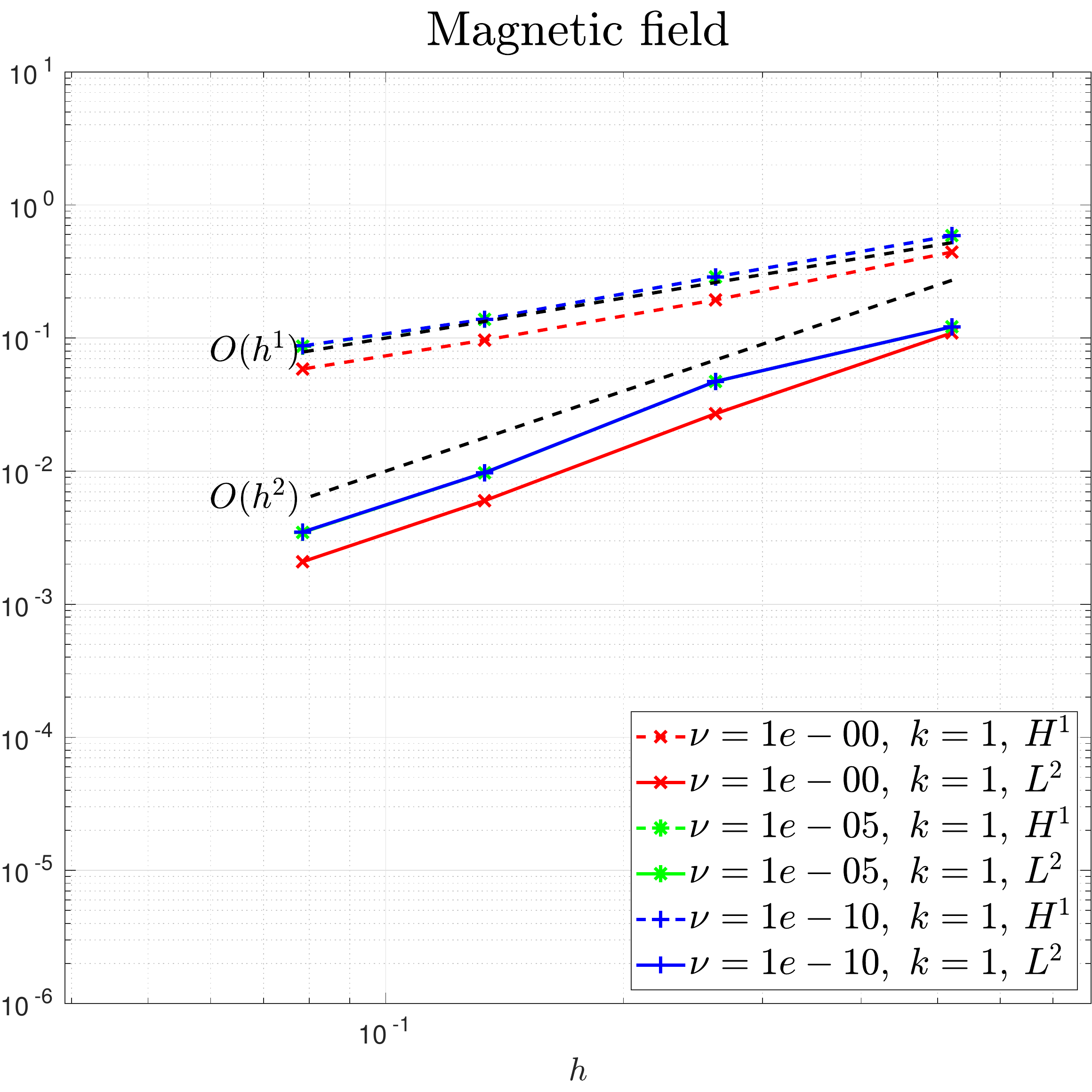}\\
\end{tabular}
\caption{Convection dominant regime with different values of $\ns$ and $\nm$, $k=1$.}
\label{fig:exe1_2_dgr_1}
\end{figure}

\begin{figure}[!htb]
\centering
\begin{tabular}{cc}
\includegraphics[width=0.46\textwidth]{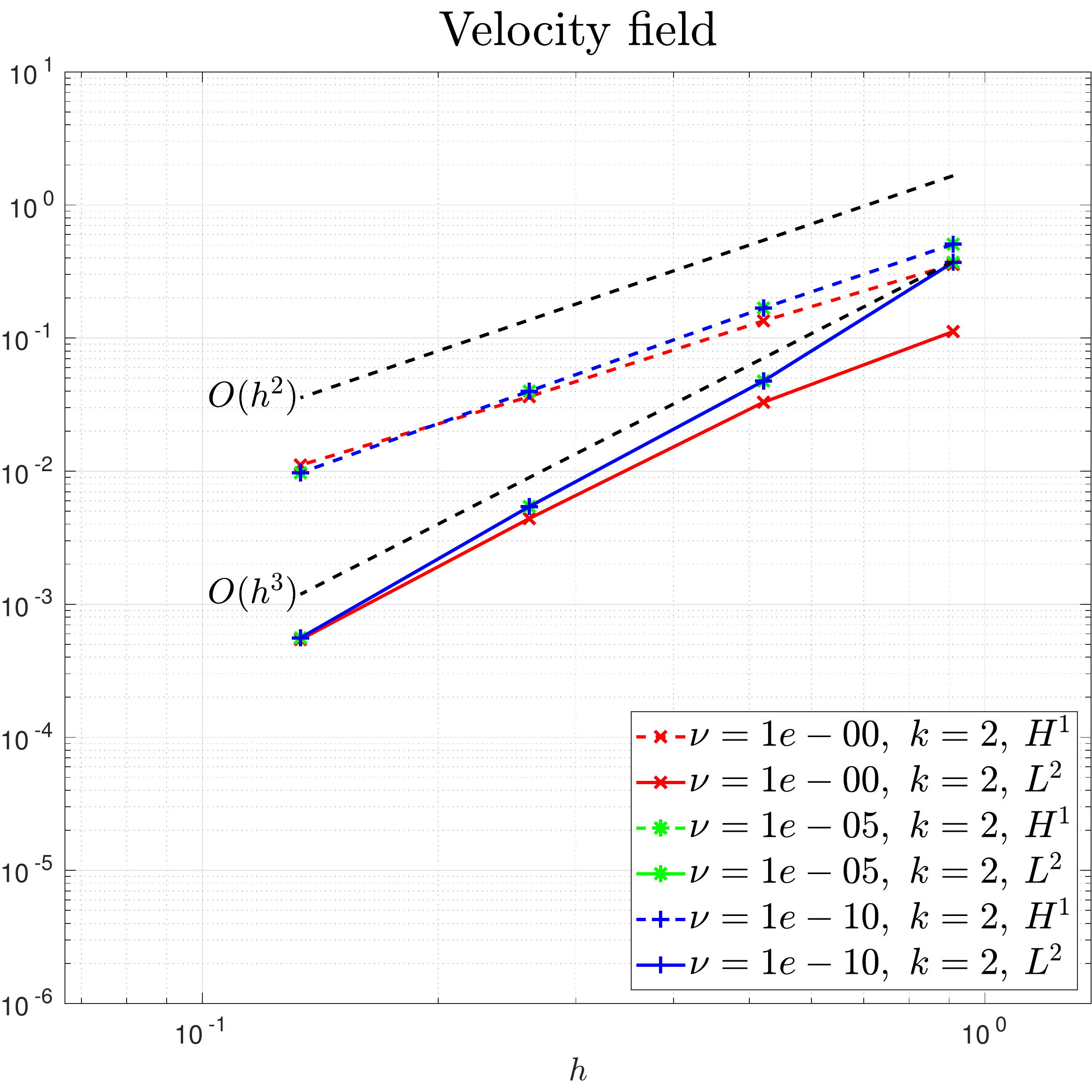}& 
\includegraphics[width=0.46\textwidth]{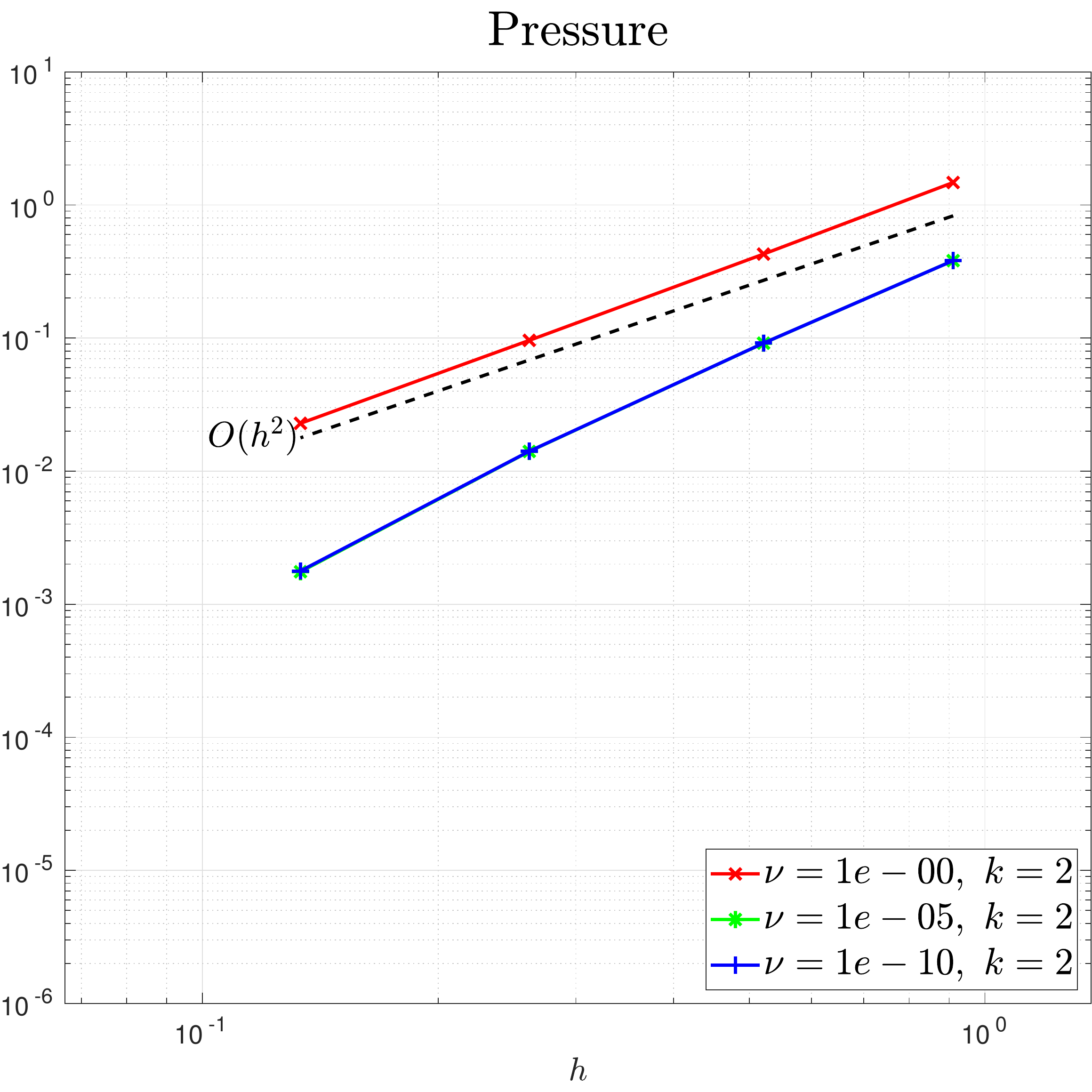}\\
\includegraphics[width=0.46\textwidth]{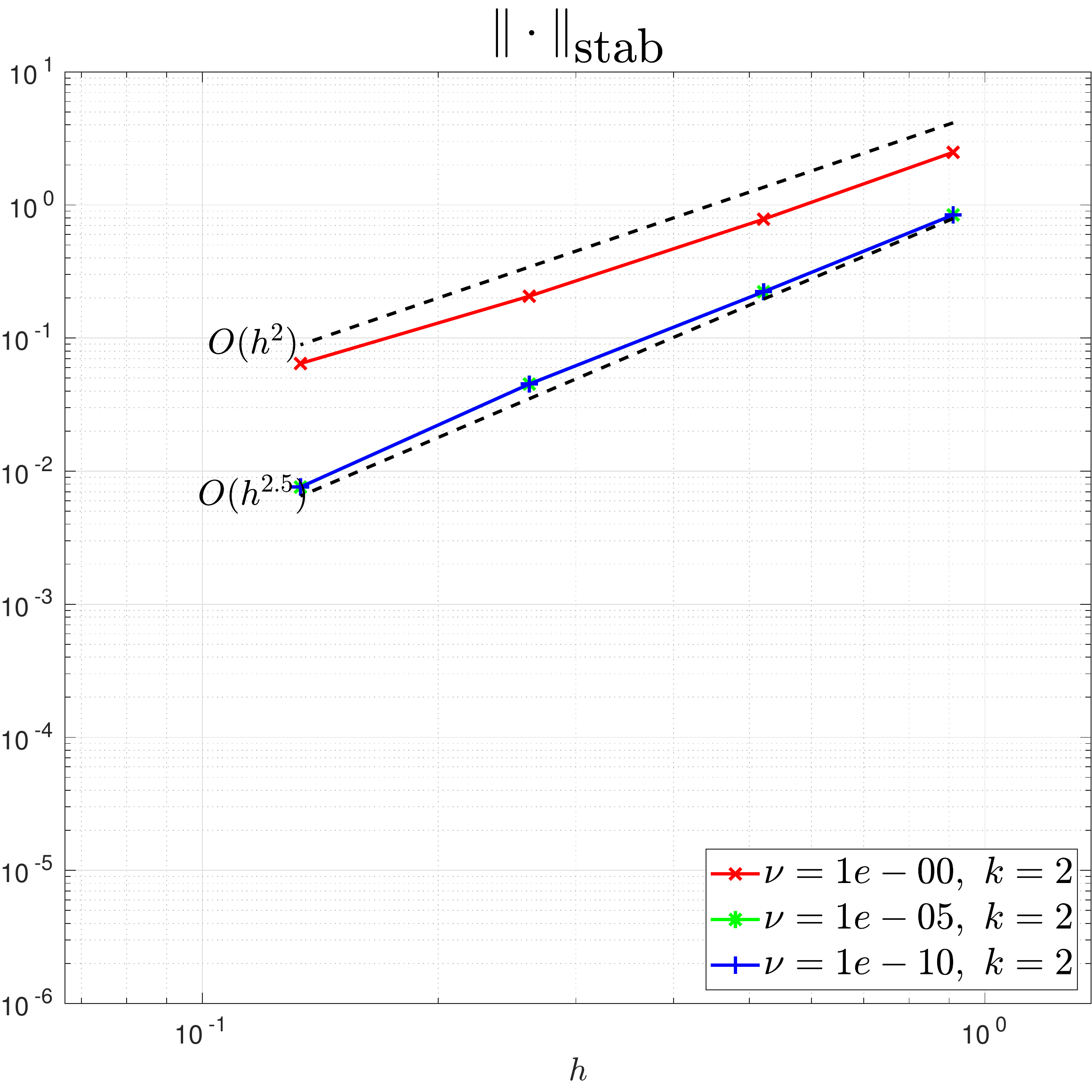}&
\includegraphics[width=0.46\textwidth]{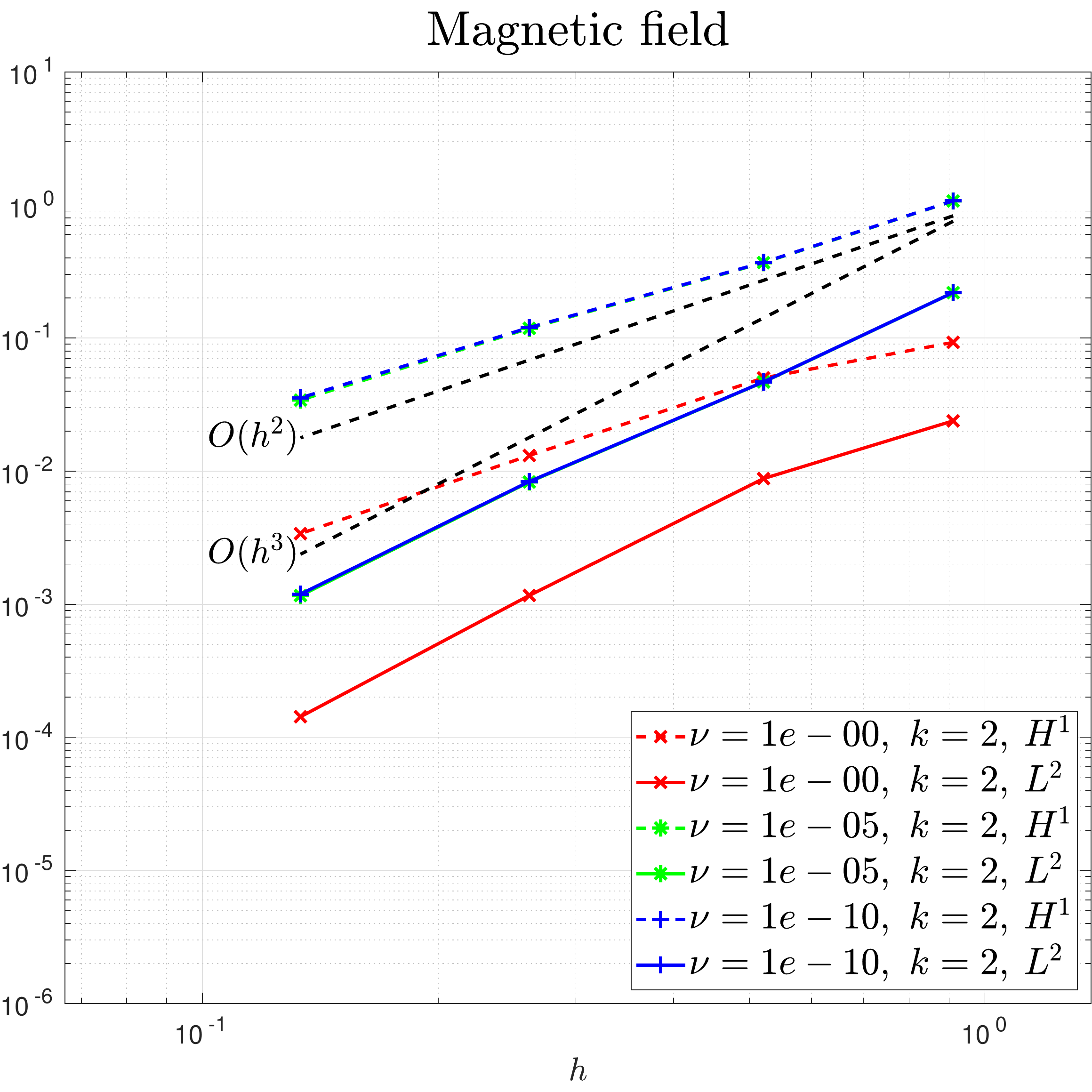}\\
\end{tabular}
\caption{ Convection dominant regime with different values of $\ns$ and $\nm$, $k=2$.}
\label{fig:exe1_2_dgr_2}
\end{figure}

In Figures~\ref{fig:exe1_2_dgr_1} and~\ref{fig:exe1_2_dgr_2}, 
we present the convergence trend of all these error indicators for each choice of the parameter $\nu$.
These graphs also include the trend in a diffusive dominant regime, $\nu = \texttt{1e-00}$,
for comparison with the convective dominant regime.
All the error indicators for all variables exhibit a behaviour consistent with the theoretical results for each degree $k$.

It is worth inspecting more in deep the trend of $\normastab{\cdot}$.
According to the theory, in a diffusive dominant regime the convergence rate has to be $O\left(h^k\right)$,
while in a (pre-asymptotic) convective dominant regime the error reduction rate is $O\left(h^{(2k+1)/2}\right)$:
the numerical data precisely align with this behaviour.

\begin{figure}[!htb]
\centering
\begin{tabular}{cc}
\includegraphics[width=0.46\textwidth]{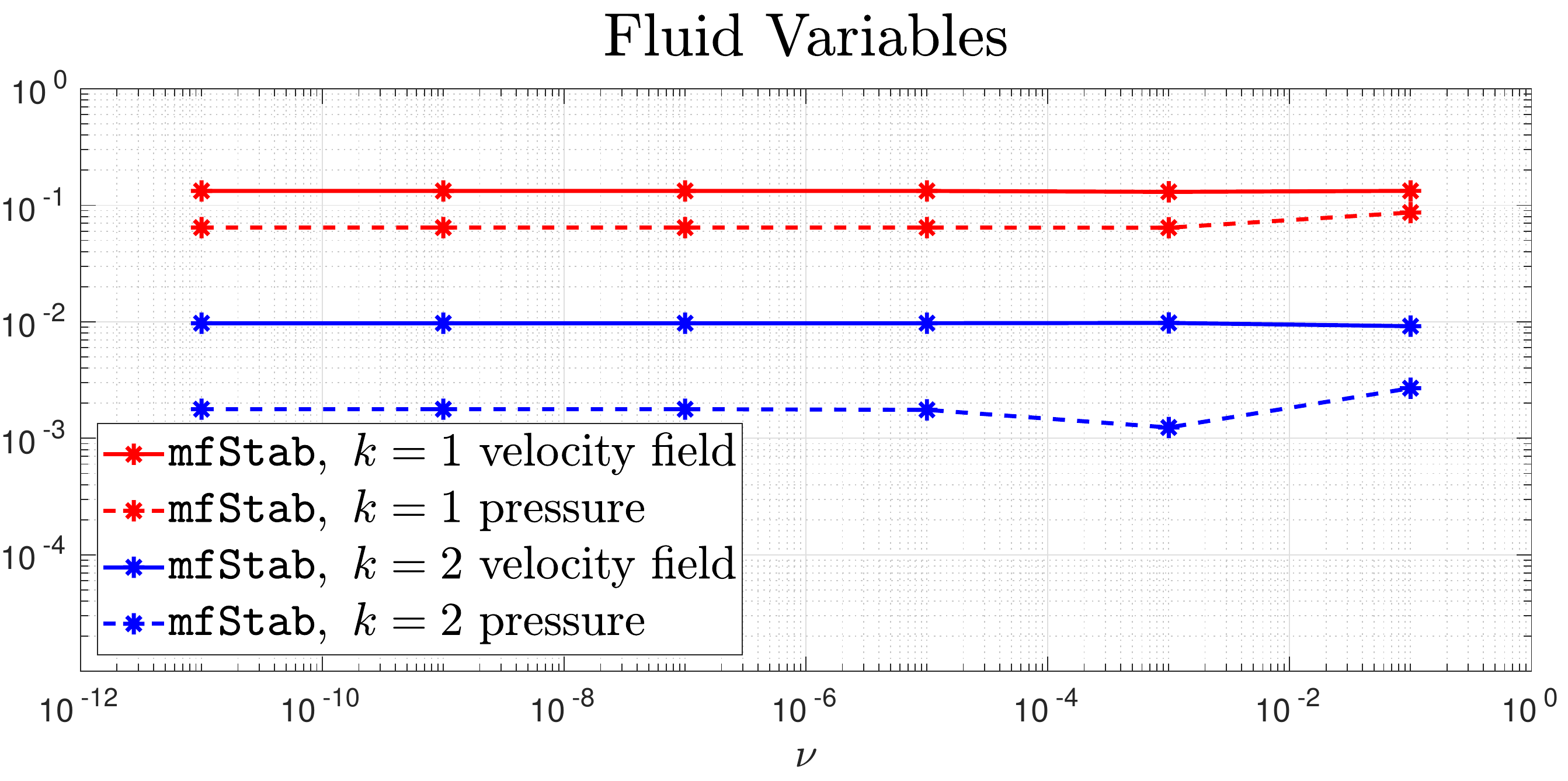}& 
\includegraphics[width=0.46\textwidth]{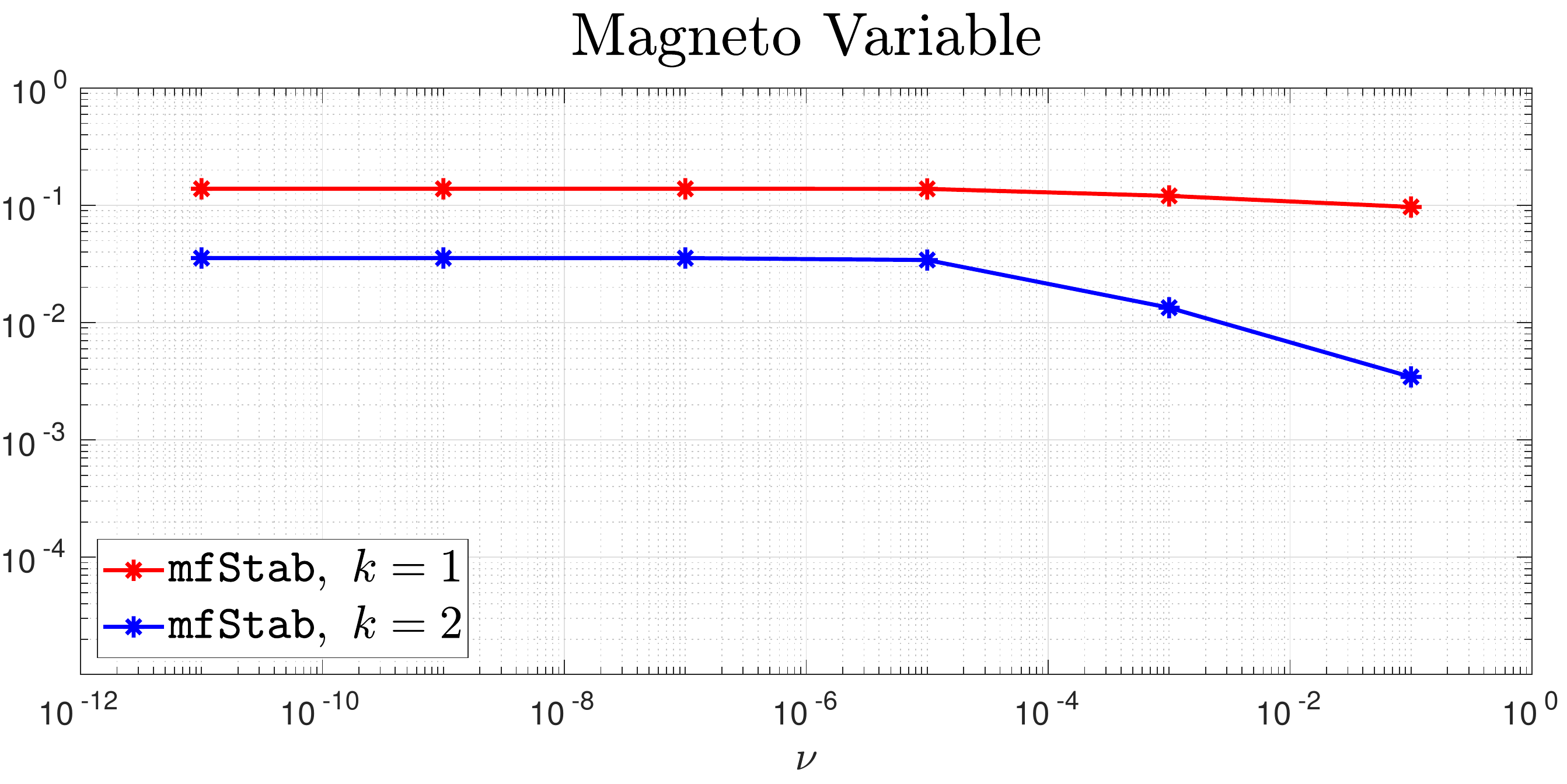}
\end{tabular}
\caption{Stability with respect to $\ns$ and $\nm$ for $k=1$ and 2.}
\label{fig:exe1_3}
\end{figure}

In Figure~\ref{fig:exe1_3}, we present an additional experiment specifically designed 
to highlight the robustness of the proposed method with respect to $\nu$.
We keep \mesh{3} fixed and we vary the values of $\nu$ form \texttt{1e-01} to \texttt{1e-11}, 
we depict only the $H^1$ semi-norm error for both velocity and magnetic fields, as well as the $L^2$ norm for pressure.
The errors remain nearly constant across different values of $\nu$, 
highlighting the fact that the \mfs{} method is not significantly affected by low values of $\nu$ (note the scale of the vertical axis in the figure).

\subsection{Magneto convective dominant regime}

In this section, we investigate a magnetic convective-dominant regime by setting $\cc=\mathbf{0}$ and $\TT=\BB$ in \eqref{eq:linear primale}.
In this scenario the stability term
$$
\bdmc \sum_{f \in \EdgesI} (\vert \cc \cdot \nn_f \vert \jump{\uu}_f , \, \jump{\vv_h}_f)_f\,,
$$
vanishes, and only the bilinear form $J_h$ stabilizes the whole method.
We set $\ns=\texttt{1e-10}$ and $\nm=\texttt{1e-02}$ 
which corresponds to a fairly realistic choice of such coefficients.

In Figures~\ref{fig:exe2_2_dgr_1} and~\ref{fig:exe2_2_dgr_2} we show the resulting convergence lines.
The method \mfs{} exhibits the optimal trends for velocity, pressure and magnetic variables, 
while the method \fs{} provides a much less accurate approximation of the velocity field.

We conducted also an experiment with $\ns=\nm=\texttt{1e-10}$, $\cc=\mathbf{0}$ and $\TT=\BB$.
Since the obtained convergence graphs are essentially identical to the ones shown in Figures~\ref{fig:exe2_2_dgr_1} and~\ref{fig:exe2_2_dgr_2}, for the sake of brevity we do not report them. 

\begin{figure}[!htb]
\centering
\begin{tabular}{cc}
\includegraphics[width=0.46\textwidth]{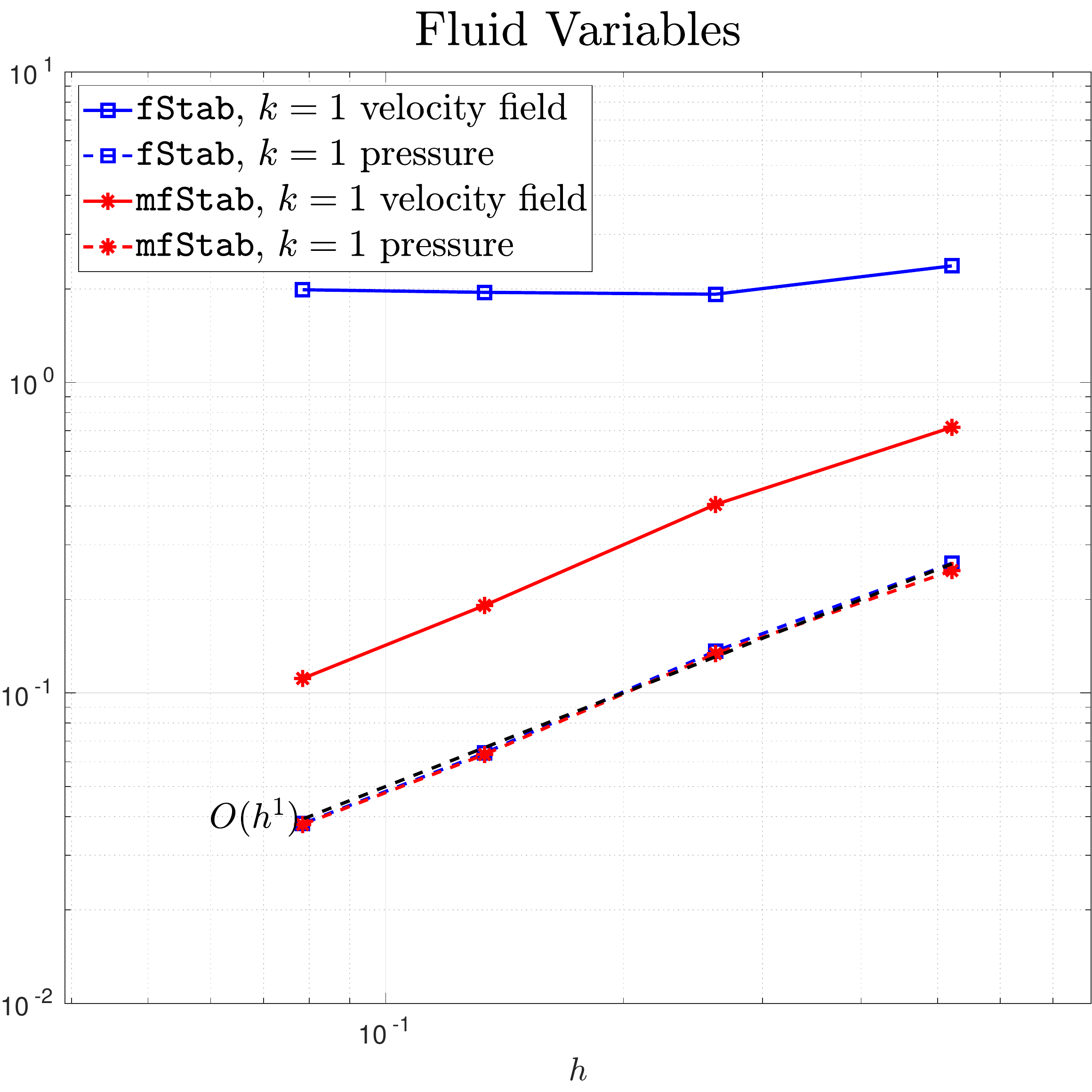}& 
\includegraphics[width=0.46\textwidth]{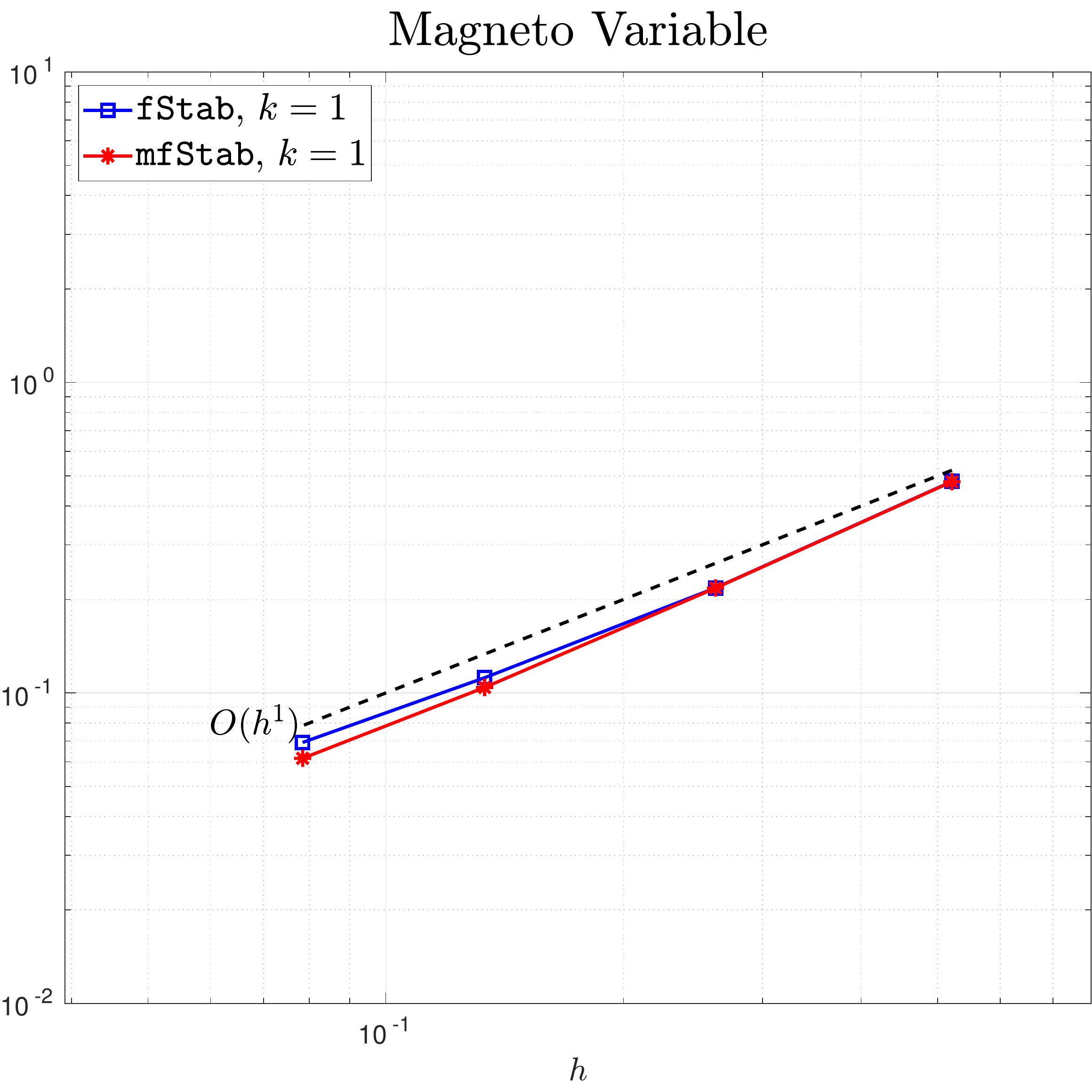}
\end{tabular}
\caption{Magneto convective dominant regime $\ns=\texttt{1e-10}$ and $\nm=\texttt{1e-02}$, $k=1$.}
\label{fig:exe2_2_dgr_1}
\end{figure}

\begin{figure}[!htb]
\centering
\begin{tabular}{cc}
\includegraphics[width=0.46\textwidth]{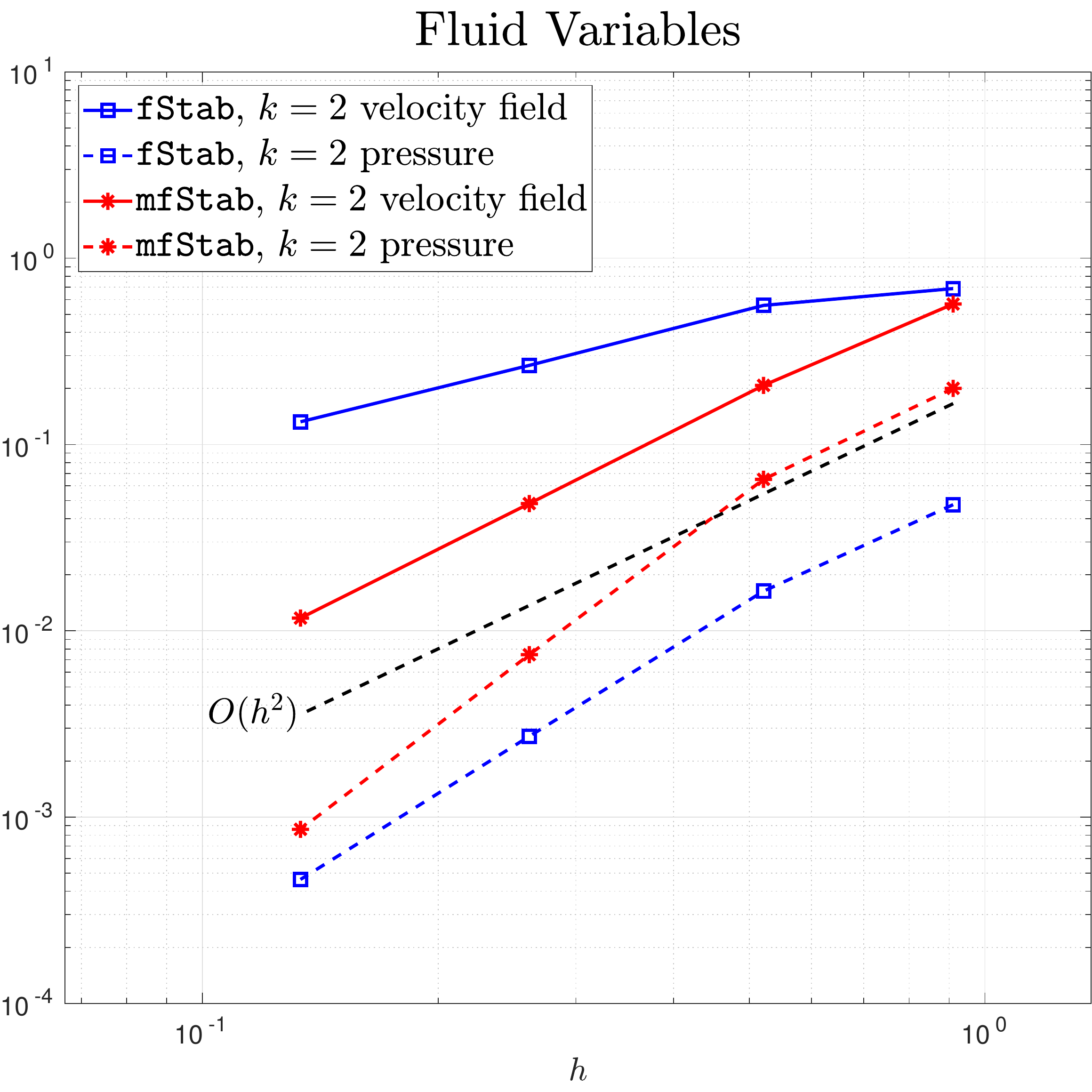}& 
\includegraphics[width=0.46\textwidth]{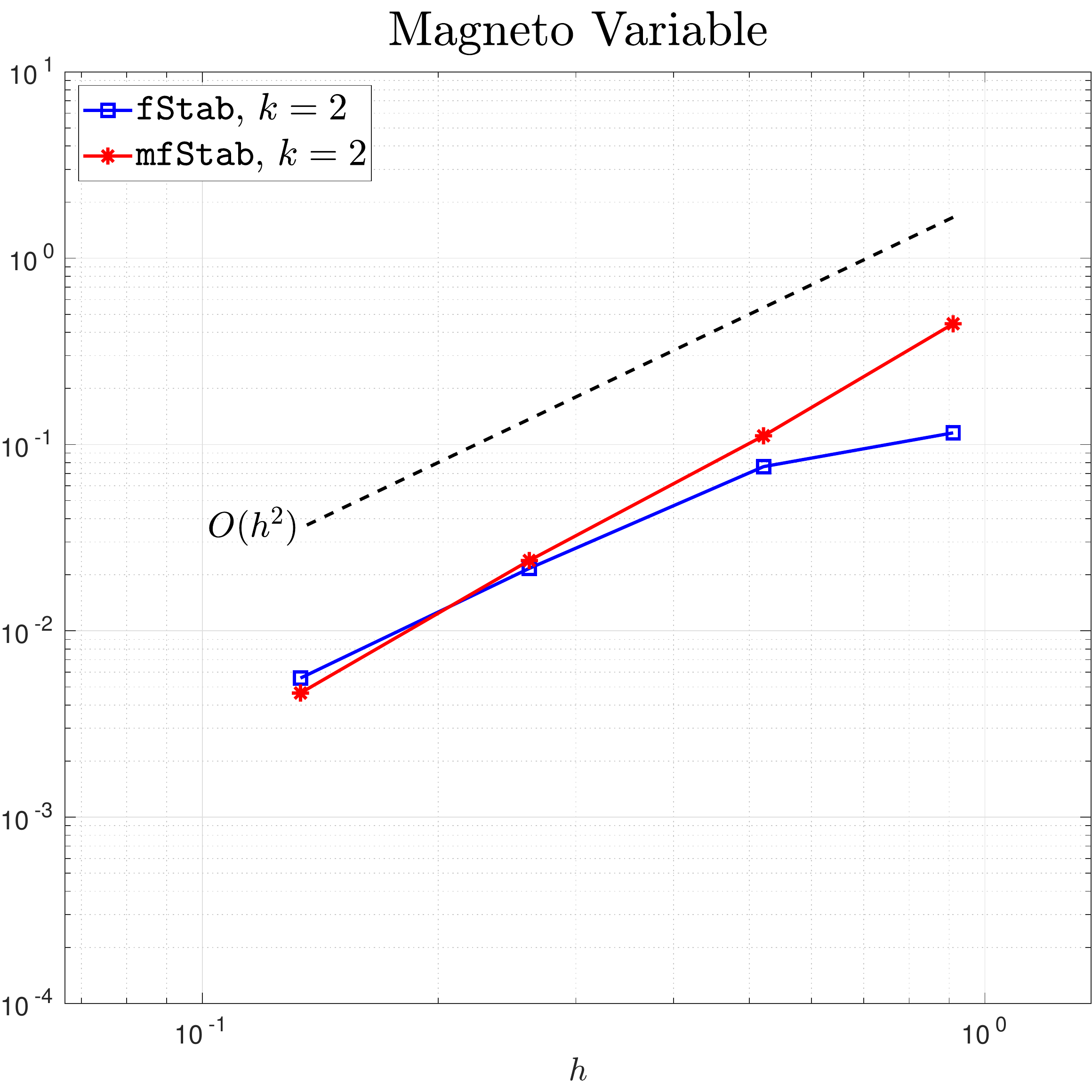}
\end{tabular}
\caption{Magneto convective dominant regime $\ns=\texttt{1e-10}$ and $\nm=\texttt{1e-02}$, $k=2$.}
\label{fig:exe2_2_dgr_2}
\end{figure}

%\begin{comment}
Firstly we consider the case $\ns=\nm=\texttt{1e-10}$.
Figure~\ref{fig:exe2_1_dgr_1} illustrates the convergence lines associated with this choice of the diffusive constants.
In this convective dominant regime, the magnetic and pressure variables are unaffected, 
and both the \mfs{} and \fs{} schemes exhibit the optimal theoretical convergence rate.
However, the convergence behavior of the velocity field differs between these two schemes:
only the \mfs{} strategy has the optimal convergence rate.

\begin{figure}[!htb]
\centering
\begin{tabular}{cc}
\includegraphics[width=0.46\textwidth]{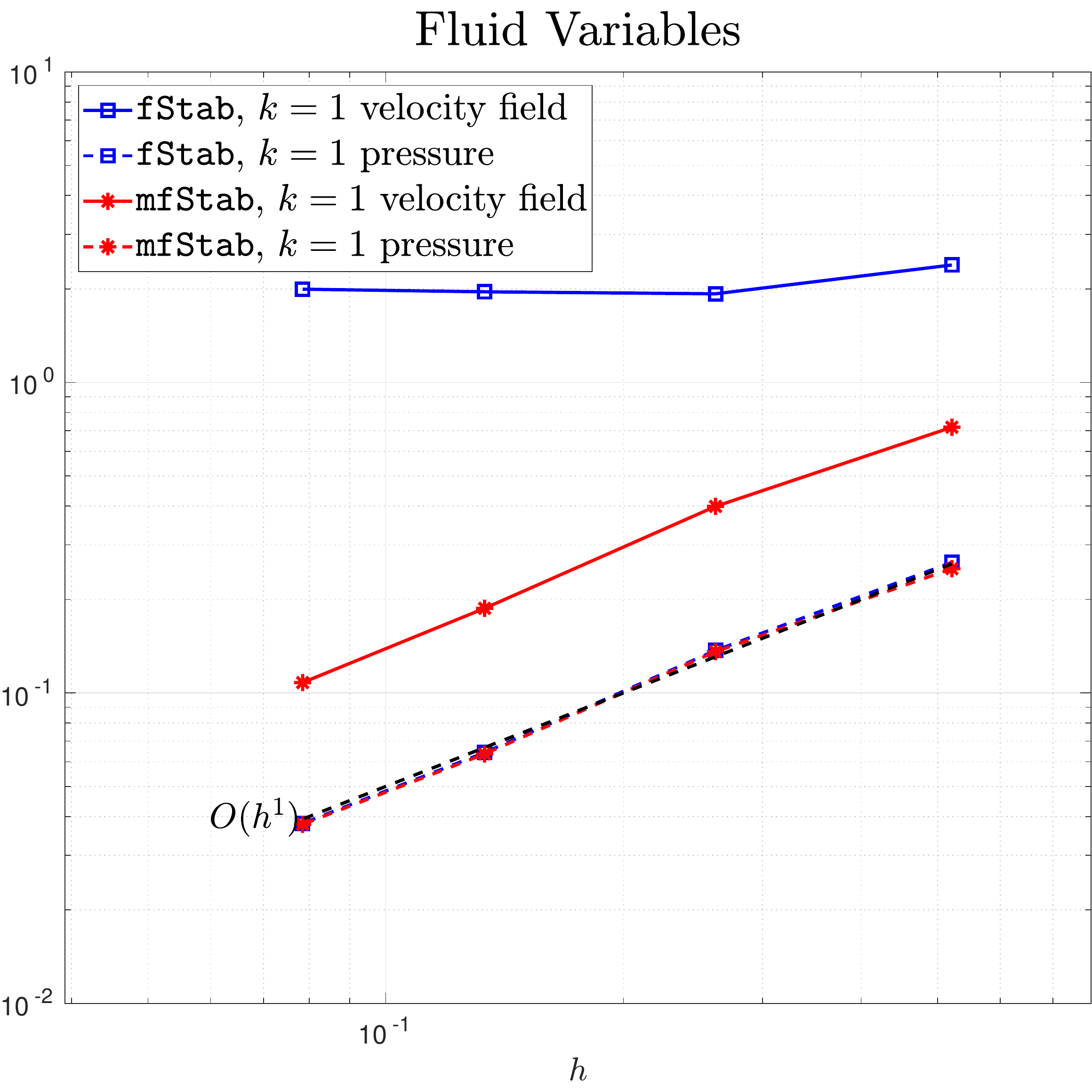}& 
\includegraphics[width=0.46\textwidth]{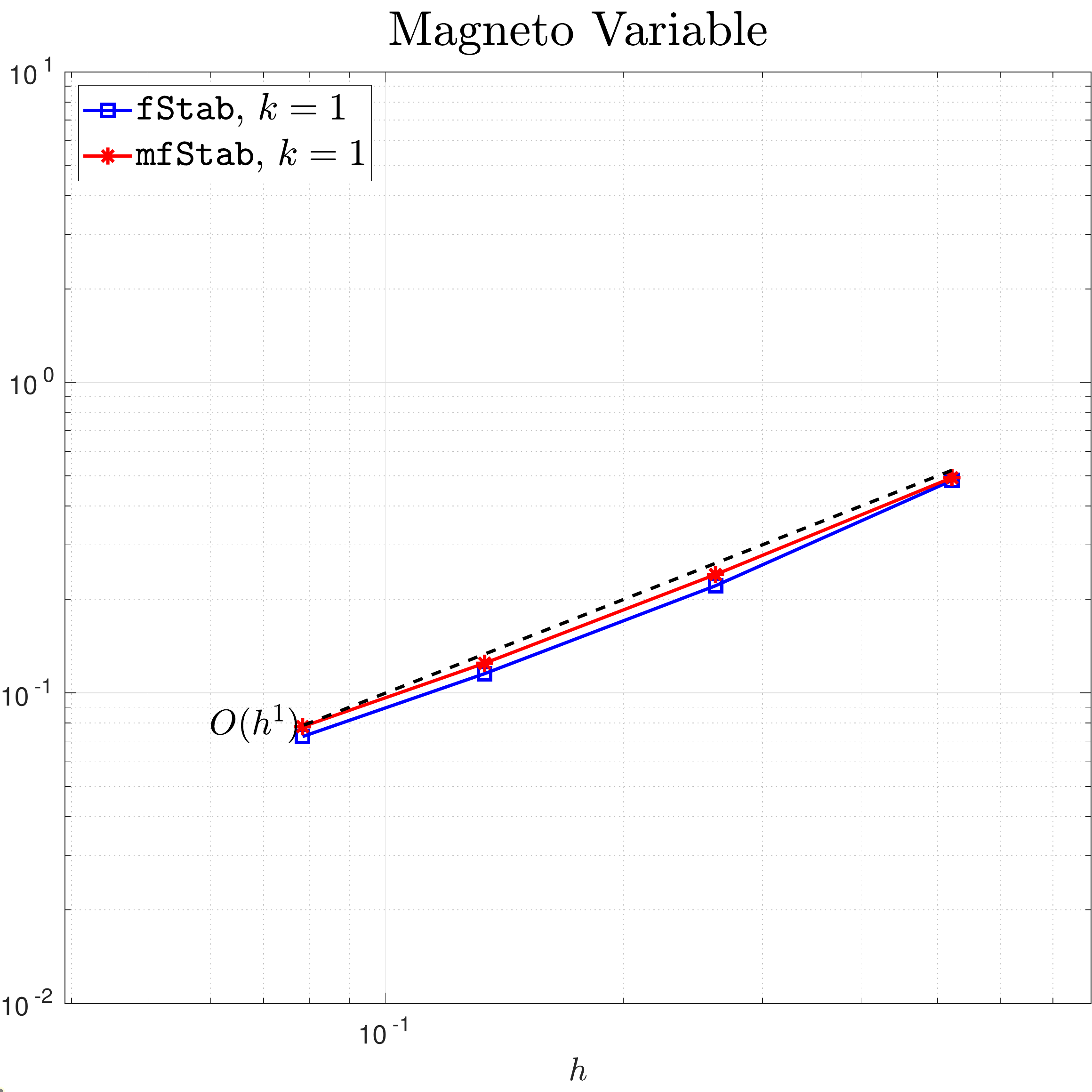}
\end{tabular}
\caption{Magneto convective dominant regime $\ns=\nm=\texttt{1e-10}$ case $k=1$.}
\label{fig:exe2_1_dgr_1}
\end{figure}

\begin{figure}[!htb]
\centering
\begin{tabular}{cc}
\includegraphics[width=0.46\textwidth]{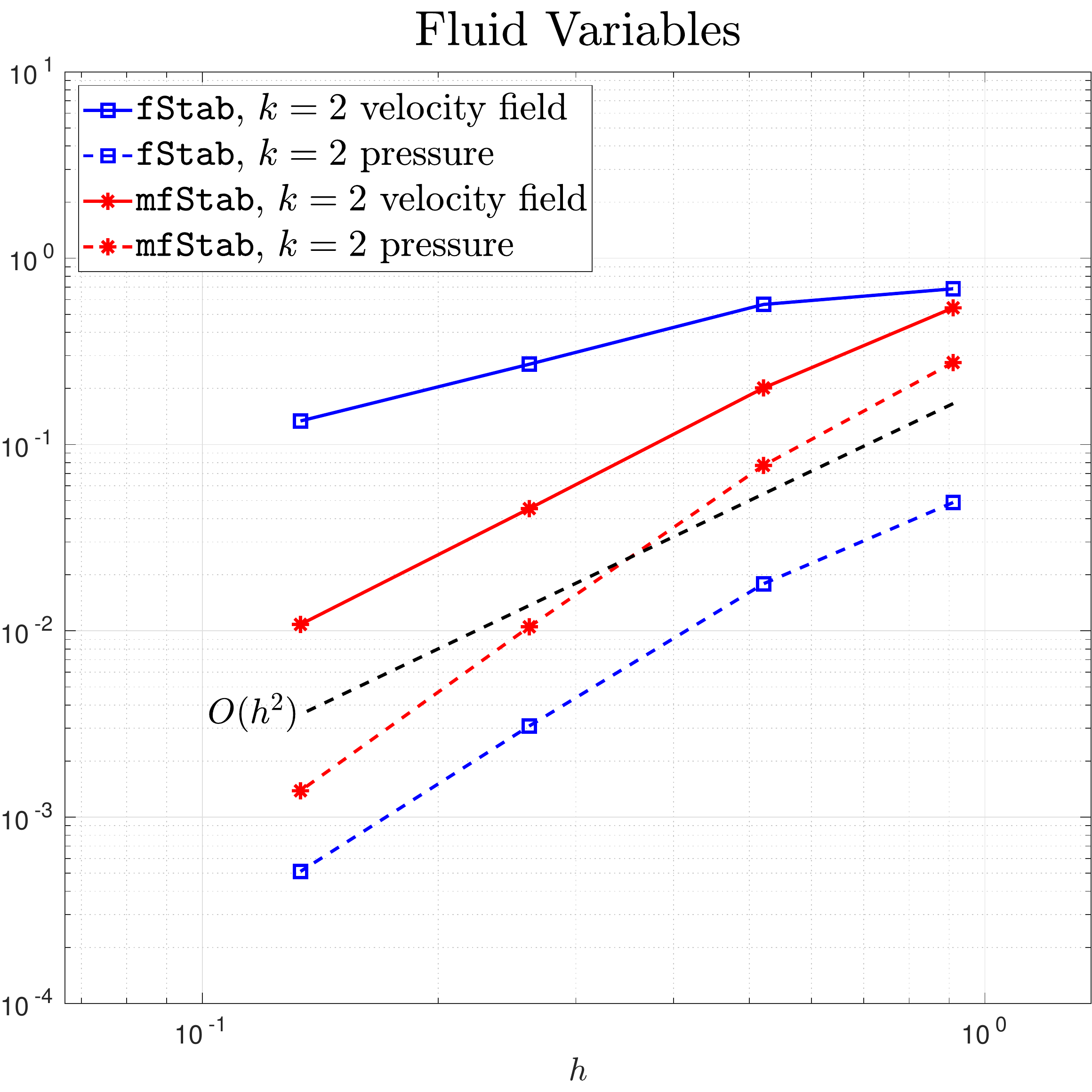}& 
\includegraphics[width=0.46\textwidth]{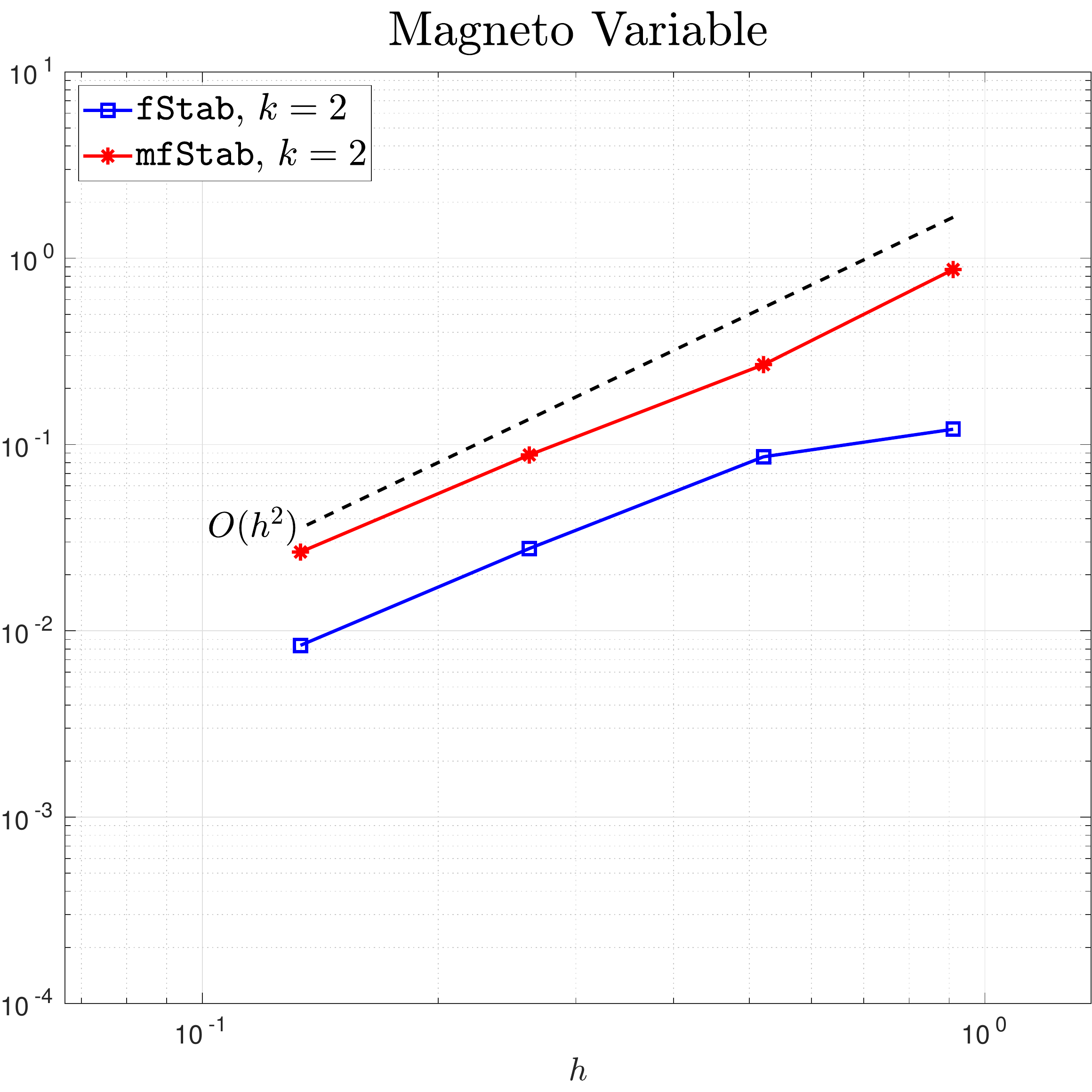}
\end{tabular}
\caption{Magneto convective dominant regime $\ns=\nm=\texttt{1e-10}$ case $k=2$.}
\label{fig:exe2_1_dgr_2}
\end{figure}
%\end{comment}

\addcontentsline{toc}{section}{\refname}
\bibliographystyle{plain}
\bibliography{references}
\end{document}